\renewcommand\MR[1]{}
\newcommand\ti\tilde
\renewcommand\o\otimes
\newcommand\x\times
\renewcommand\:\colon
\newcommand{\ol}[1]{\overline{#1}}
\newcommand\cross{\mathsf{X}}
\newcommand\m\mu
\renewcommand\l\lambda
\renewcommand\k\kappa
\newcommand\s\sigma
\renewcommand\t\tau
\renewcommand\L\Lambda
\newcommand\g\gamma
\newcommand\G\Gamma
\newcommand\eps\varepsilon
\newcommand\D\Delta
\newcommand\U{\mathcal{U}}
\newcommand\kk{\mathbbm{k}}
\newcommand\gf{\mathfrak{g}}
\newcommand\gl{\mathfrak{gl}}
\newcommand\cC{\mathcal{C}}
\newcommand\J{\mathcal{J}}
\newcommand\one{\mathbbm{1}}
\newcommand\Ug{{\U\gf}}
\newcommand\NN{\mathbb{N}_0}
\newcommand\CC{\mathbb{C}}
\newcommand\gr{\operatorname{gr}}
\newcommand\id{\operatorname{id}}
\newcommand\im{\operatorname{im}}
\newcommand\alt{{\operatorname{alt}}}
\newcommand\sym{{\operatorname{sym}}}
\newcommand\cyc{{\operatorname{cyc}}}
\newcommand\Hom{\operatorname{Hom}}
\newcommand\Ind{\operatorname{Ind}}
\newcommand\Aut{\operatorname{Aut}}
\newcommand\Rep{\operatorname{Rep}}
\newcommand\ASD{\operatorname{ASD}}
\newcommand\OSp{\operatorname{OSp}}
\newcommand\sgn{\operatorname{sgn}}
\newcommand\Sp{\operatorname{Sp}}
\newcommand\SVec{\operatorname{SVec}}
\renewcommand\Vec{\operatorname{Vec}}
\newcommand\tr{\operatorname{tr}}
\newcommand\str{\operatorname{str}}
\newcommand\form{{\operatorname{form}}}
\newcommand\Lie{{\operatorname{Lie}}}
\newcommand\RepOt{\underline{\operatorname{Rep}}(O_t)}
\newcommand\tforall{\quad\text{ for all }}
\newcommand\teven{ \text{ even} }
\newcommand\todd{ \text{ odd} }
\newcommand\tand{ \text{ and } }
\newtheorem{theorem}{Theorem}[section]
\newtheorem{proposition}[theorem]{Proposition}
\newtheorem{corollary}[theorem]{Corollary}
\newtheorem{lemma}[theorem]{Lemma}
\newtheorem*{theorem*}{Theorem}
\theoremstyle{definition}
\newtheorem{definition}[theorem]{Definition}
\theoremstyle{remark}
\newtheorem{example}[theorem]{Example}
\newtheorem{remark}[theorem]{Remark}
\newcommand\arch[1]{~\tikz[baseline=(n.base),rounded corners=8pt]{%
\node[inner xsep=5pt](n){$#1$};
\draw (n.base-|n.south west) |- (n.north) -| (n.base-|n.south east);
}~%
}
\newcommand\uarch[1]{~\tikz[baseline=(n.base),rounded corners=4pt]{%
\node[inner xsep=5pt](n){$#1$};
\draw ($(n.north west)+(0,-2pt)$) |- (n.south) -| ($(n.north east)+(0,-2pt)$);
}~%
}
\tikzset{
	partition/.style={
      scale=0.5,
      yscale=-1,
      baseline={([yshift=-0.5ex]current bounding box.center)}
    }
}
\tikzset{
    bend/.cd,
    0/.style={},
    1/.style={bend right},
    -1/.style={bend left}
}
\def\lenfac{1}
\newcommand\makePartPt[1]{({mod(#1,10)*\lenfac},{(#1-mod(#1,10))*.1})}
\newcommand\makePartLn[2]{%
\pgfmathtruncatemacro\bend{%
(int(#1/10)==int(#2/10)) ?
(#1<10 ? 1 : -1)*(#1>#2 ? 1 : -1)
: 0%
}
\draw ({mod(#1,10)*\lenfac},{int(#1/10)}) to[bend/\bend] ({mod(#2,10)*\lenfac},{int(#2/10)});
}
\newcommand\tpx[2] {%
\tikz[partition] {
\draw[white,opacity=0] (1,0)--(1,1); 

#2;

\def\j{0}
\foreach \i [remember=\i as \j] in {#1} {
  \ifdim \i cm > 0 cm
    \ifdim \j cm > 0 cm
      \makePartLn{\i}{\j};
    \fi
    \draw[fill] \makePartPt\i circle (2.5pt);
  \fi
} 
}}
\newcommand\tpa[4]{\tpx{#4}{
\foreach \i in {1,...,#1}
\draw[draw=black!20,line width=8pt,line cap=round] ({((\i-1)*(#2)+1)*\lenfac},0) -- ({\i*(#2)*\lenfac},0);
\foreach \i in {1,...,#3}
\draw[draw=black!20,line width=8pt,line cap=round] ({(2*\i-1)*\lenfac},1) -- ({2*\i*\lenfac},1);
}%
}
\newcommand\tp[1]{\tpx{#1}{}}
\newcommand\tpsyma[3]{\tpx{#3}{%
\draw[<->,gray] (#1*\lenfac,-0.5) -- (#1*\lenfac+\lenfac,-0.5);
\ifdim #2 cm > 0 cm
\draw[<->,gray] (\lenfac,1.5) -- (#2*\lenfac,1.5); 
\fi;
}}
\newcommand\tpsymb[2]{\tpx{#2}{%
\draw[<->,gray] (\lenfac,1.5) -- (2*\lenfac,1.5);
\ifdim #1 cm > 0 cm
\draw[<->,gray] (3*\lenfac,1.5) -- (#1*\lenfac+2*\lenfac,1.5); 
\fi;
}}
\tikzset{
	mydot/.style={
    draw=black!40, fill=black!20,
    circle, font={\sffamily\scriptsize}, inner sep=1.5pt
    }
}
\newcommand\mymoredots[2]{%
\foreach \i in {1,...,#1}
\node[mydot] (a\i) at (\i,1) {\i};
\ifnum #2 > 0
\foreach \i [count=\j from #1+1] in {1,...,#2}
\node[mydot] (b\i) at ({\i+(#1-1)*.5},0) {\j};
\fi
}
\newcommand\mydots[1]{\mymoredots{#1}{0}}
\newcommand\myloop[2]{%
\draw (#1) to[out=120,in=60,looseness=10] node[midway,above] {#2} (#1);
}
\newcommand\mydiagscale{0.8}
\newcommand\myxdots[2]{%
~\tikz[baseline=(a1.base),scale=\mydiagscale, every node/.style={transform shape}]{\mydots{#1}; \draw #2;}~%
}
\newcommand\myxmoredots[3]{%
~\tikz[baseline=0.5cm,scale=\mydiagscale, every node/.style={transform shape}]{\mymoredots{#1}{#2}; \draw #3;}~%
}
\numberwithin{equation}{section}
\title{Interpolating PBW Deformations for the Orthosymplectic Groups}
\author{Johannes Flake}
\address{Algebra and Representation Theory, RWTH Aachen University,
Pontdriesch 10--16,
52062 Aachen, Germany}
\email{flake@art.rwth-aachen.de}
\author{Verity Mackscheidt}
\address{Algebra and Representation Theory, RWTH Aachen University,
Pontdriesch 10--16,
52062 Aachen, Germany}
\email{mackscheidt@art.rwth-aachen.de}
\date{\today}
\keywords{PBW deformations, Deligne's interpolation categories, orthosymplecic supergroups, Jacobi identity, tensor categories}
\subjclass[2020]{16S80, 18M05, 18M30, 17B10, 22E47, 16S40}
\begin{document}

\maketitle

\begin{abstract}
We propose to use interpolation categories to study PBW deformations, and demonstrate this idea for the orthosymplectic supergroups. Employing a combinatorial calculus based on pseudographs and partitions which we derive from a suitable Jacobi identity, we classify PBW deformations in (quotients of) Deligne's interpolation categories for the orthosymplectic groups. As special cases, our classification recovers families of infinitesimal Hecke algebras found by Etingof--Gan--Ginzburg (2005) for the symplectic groups and by Tsymbaliuk (2015) for the orthogonal groups together with their respective standard representations using completely different geometric methods. Our results can be viewed as an extension of these known results to the family of all orthosymplectic groups together with all of their fundamental representations, obtained by novel interpolation techniques for PBW deformations.
\end{abstract}

\section{Introduction}

PBW (Poincaré--Birkhoff--Witt) deformations are deformations of a filtered algebra preserving its associated graded algebra. The eponymous example is the universal enveloping algebra of a Lie algebra viewed as a deformation of the symmetric algebra, which enjoys this property according to the classical Poincaré--Birkhoff--Witt theorem. A general theory of PBW deformations was established by Bravermann--Gaitsgory \cite{BG},  Polishchuk--Positselski \cite{PP}, and in a more general form by Walton--Witherspoon \cite{WW}; a central result is the fact that PBW deformations are characterized by a suitable generalization of the classical Jacobi identity. Certain PBW deformations related to finite groups were investigated by Etingof--Ginzburg \cite{EG}, they include rational Cherednik algebras and symplectic reflection algebras. Both classes of algebras are instances of so-called Drinfeld--Hecke algebras which were studied in their own right (see \cite{RS}). Etingof--Gan--Ginzburg \cite{EGG} suggested to consider analogs of such Hecke algebras for algebraic groups, which they called continuous Hecke algebras and infinitesimal Hecke algebras. The classification of these PBW deformations in general is an open problem. However, certain classes of infinitesimal Hecke algebras for special choices of an algebraic group together with a representation were classified in \cite{EGG} and subsequent work. In particular, infinitesimal Hecke algebras for orthogonal and symplectic groups with their natural representation were classified in \cite{Tsy} and \cite{EGG}, respectively.

Deligne's interpolation categories \cite{Del} are (possibly nonabelian) tensor categories depending on a parameter, which interpolate the representation categories of families of finite or classical algebraic groups. Typically, they are universal categories with respect to an algebraic structure present in all representation categories within the family of groups. They can also be defined combinatorially via a graphical calculus based on string diagrams. Interpolation categories include interesting examples of tensor categories which do not admit fiber functors to categories of vector spaces, and are, hence, not representation categories of algebraic objects by a version of Tannakian reconstruction. Deligne's interpolation categories for the orthogonal groups are usually denoted $\RepOt$. They are the additive and pseudo-abelian completion of the (unoriented) Brauer categories. In a certain sense, they also interpolate the representation categories of all orthosymplectic Lie supergroups (see \Cref{sec::RepOt}).

We suggest to use interpolation categories as a source of uniform bases in the morphism spaces for a given family of groups.
For the case of the orthosymplectic groups and their fundamental representations, these bases consist of (suitable symmetrizations of) arc diagrams, which we parametrize, in turn, using a combinatorial datum which can be represented using pseudographs and partitions.
We then show that the Jacobi identity characterizing certain PBW deformations can be translated into a graphical calculus on the level of the pseudographs and partitions.
Eventually, we solve the transformed Jacobi identity to obtain a classification of all PBW deformations in question in terms of the aforementioned uniform bases for all orthosymplectic groups.

More concretely, we consider an interpolation version $V$ of the standard representation of the orthosymplectic groups. $V$ is an object in Deligne's interpolation category $\RepOt$. The exterior powers $\L^e V$ in $\RepOt$ for $e\ge1$ can then be regarded as interpolation versions of the fundamental representations. We classify PBW deformations of the smash product algebras $S\L^e V\rtimes\Ug$ and $\L\L^e V\rtimes\Ug$, where $\Ug$ is the interpolation version of the universal enveloping algebra of the Lie algebras of the orthosymplectic groups, and where $S\L^eV$ and $\L\L^eV$ are the symmetric and the exterior algebra of the fundamental representations described above. We call these PBW deformations of type $(\Ug,\L^eV,\rho)$, where we set $\rho=1$ for the symmetric algebra and $\rho=-1$ for the exterior algebra.

\begin{theorem*}[\Cref{thm::PBW-defs}] A basis of the space of interpolating PBW deformations of type $(\Ug,\L^e V,\rho)$ is given by the following explicitly constructed PBW deformation maps: 
\begin{itemize}
    \item a family of deformation maps parametrized by the natural numbers for $e=1$,
    \item a symmetric form on $\L^2 V$ if $e=2$ and $\rho=1$,
    \item a Lie bracket on $\L^2 V$ if $e=2$ and $\rho=-1$,
    \item a family of deformation maps parametrized by pairs of partitions $(\mu,\nu)$ for even $e\ge 2$.
\end{itemize}
\end{theorem*}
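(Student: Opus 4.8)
The plan is to reduce the classification to a single Jacobi-type equation in a graded morphism space, translate that equation into the pseudograph-and-partition calculus, and solve it. First I would invoke the categorical PBW criterion \cite{BG,WW}: a PBW deformation of type $(\Ug,\L^eV,\rho)$ is precisely a morphism $\kappa\colon\L^eV\o\L^eV\to\Ug$ in $\RepOt$ (or in the relevant quotient category) that is $(-\rho)$-symmetric for the braiding and satisfies the associated Jacobi identity. Since $\Ug$ has a PBW filtration with $\gr\Ug\cong S(\L^2V)=\bigoplus_{k\ge0}S^k\L^2V$ (using $\gf\cong\L^2V$ in $\RepOt$), the map $\kappa$ decomposes into finitely many homogeneous components $\kappa_k\in\Hom_{\RepOt}(\L^eV\o\L^eV,S^k\L^2V)$; being a morphism in $\RepOt$ already encodes the $\gf$-equivariance, so the only remaining condition is the Jacobi identity, which couples the $\kappa_k$.

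Next I would make these morphism spaces explicit. At generic $t$ each $\Hom_{\RepOt}(\L^eV\o\L^eV,S^k\L^2V)$ is finite-dimensional with a uniform basis of symmetrized arc diagrams: starting from arc diagrams on the boundary points of source and target, one symmetrizes the two groups of $e$ input legs, symmetrizes the $k$ pairs of output legs and antisymmetrizes within each pair, and the surviving diagrams are exactly those indexed by the pseudographs and partitions introduced earlier (the pseudograph recording which symmetrized leg-groups are joined and with what multiplicities, the partitions the residual combinatorial type). Expanding each $\kappa_k$ over such a basis introduces scalar unknowns; closed strands produced under composition contribute factors polynomial in $t$.

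The technical heart is to turn the Jacobi identity into linear equations on these unknowns. One composes the prescribed combination of copies of $\kappa$ with the $\gf$-action (via $\gf\cong\L^2V$), forms the alternating sum dictated by the PBW criterion, and reduces every resulting arc diagram back to the chosen bases; each reduction is a move in the pseudograph-and-partition calculus, with the $t$-dependent loop factors tracked throughout. I expect this to be the main obstacle: one must classify all the ways arcs recombine under these compositions, show that reduction stays within finitely many basis diagrams in each degree, and arrange the equations --- presumably block-triangular with respect to a filtration on the indexing data --- so that their solution set can be read off. Working at generic $t$ keeps the loop factors invertible and the reductions uniform; the special values of $t$ and the quotient categories are recovered afterwards by a specialization argument, using that the solutions and their independence are witnessed by data polynomial in $t$.

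Finally I would solve the system, case by case according to the parity of $e$ and the sign $\rho$. For $e=1$ one solves the Jacobi constraint degree by degree and obtains a solution space with a basis indexed by $\NN$, which one matches term by term with the infinitesimal Hecke algebras of Etingof--Gan--Ginzburg \cite{EGG} and Tsymbaliuk \cite{Tsy} and their standard representations. For $e=2$, where $\L^eV=\L^2V\cong\gf$: when $\rho=-1$ the Jacobi constraint becomes exactly the Jacobi identity for a Lie bracket on $\L^2V$, and when $\rho=1$ only the degree-zero component survives, an invariant symmetric form on $\L^2V$. For even $e\ge2$ the solution space is spanned by the explicitly constructed deformation maps attached to pairs of partitions $(\mu,\nu)$ (specializing at $e=2$ to the two preceding items), and one checks their linear independence and that each solves the Jacobi identity. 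For odd $e\ge3$ the transformed constraint admits only the zero solution, so no nontrivial deformation exists. Assembling the four cases gives the claimed basis.
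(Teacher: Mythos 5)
Your overall strategy coincides with the paper's: reduce to a Jacobi identity for a graded map $\k=\sum_d\k_d$ with $\k_d\in\Hom(S^{2\rho}\L^eV,S^d\gf)$, expand over the symmetrized arc-diagram bases indexed by pseudographs and partitions, translate the Jacobi identity into a linear system on that indexing set, and solve case by case. Two points, however, do not survive scrutiny.

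First, your $e=2$ case is wrong. You assert that for $\rho=1$ ``only the degree-zero component survives'' and that the $(\mu,\nu)$-family ``specializes at $e=2$ to the two preceding items.'' Neither holds: at $e=2$ the solution space is spanned by $\k^\form$ (resp.\ $\k^\Lie$) \emph{together with} the infinite family $x^{\pm}(\g^{(\mu,\nu)},\emptyset)$ for $(\mu,\nu)\in Q_1^{\pm}$, which lives in degrees $\mu_1+\nu_1\ge2$. The form and the bracket arise from graphs with edges \emph{between} the two vertices ($\g^\form$, $\g^\Lie$), whereas the $(\mu,\nu)$-graphs carry only loops; these are disjoint sets of basis vectors, not specializations of one another. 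The constraint singling out $e=2$ (\Cref{cor::data-solution}(b)) is that inter-vertex edges may carry label at most $1$ and $\l=\emptyset$; it does not eliminate the loop-only graphs, so the higher-degree deformations persist and must appear in your basis.

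Second, the generic-$t$ detour is both unnecessary and, as formulated, a gap. In the paper's calculus the compositions entering the Jacobi identity (the operators $C^{\pm}_{k,d}$ followed by contraction with $s_{d-1}\o\cyc$) never close a strand, so no factor of $t$ ever appears; the resulting linear system has integer coefficients and the classification is uniform in $t$ with no genericity hypothesis. If $t$-dependent loop factors did appear, your plan to ``recover the special values afterwards by specialization'' would not be automatic: the solution space of a linear system whose coefficients are polynomial in $t$ can jump in dimension at special values of $t$, so solving at generic $t$ only bounds the answer from below there. You would need to verify loop-freeness of the reductions (which is what actually happens) rather than appeal to genericity.
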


We show how our classification for $e=1$, that is, for the standard representation, recovers the known PBW deformations for symplectic and orthogonal groups over the complex numbers from \cite{EGG} and \cite{Tsy}. The techniques used in these references are geometric and completely different from our solution, which uses interpolation categories and is eventually combinatorial in nature. In particular, we provide a new combinatorial formula for these known deformations.

By construction, our solution is uniform across the entire family of orthosymplectic groups and across all exterior powers of the natural representation, which play the role of fundamental representations.

We believe that our results will open up new ways to classify PBW deformations, including those related to other families of Lie groups or finite groups and including those in positive characteristics, which we plan to demonstrate in follow-up projects. Beyond the classification problem, we hope that the techniques we introduce are suitable to study the representations of classes of PBW deformations, for instance, to determine their centers or to describe their irreducible representations.

\textbf{Acknowledgements.} 
We thank Lars G\"ottgens for fruitful discussions and his implementations of some ideas of this paper for the software project \texttt{PBWDeformations.jl} (\cite{PBWDeformations.jl}) together with J.~F. 
The research of J.~F.~ was funded by the Deutsche Forschungsgemeinschaft (DFG, German Research Foundation) -- Project-ID 286237555 -- TRR 195. 
This article is part of the PhD thesis of V.~M.

\setcounter{tocdepth}{1}
\tableofcontents

\section{Background}

Throughout, $\kk$ will be a field of characteristic $0$.

We recall some concepts we will be using, and fix some notations.

\subsection{Partitions and pseudographs} \label{sec::graphs-partitions}
For us, a \emph{partition} is a sequence $\l$ of positive integers $\l_1,\l_2,\dots$ called \emph{parts} such that $\l_1\ge\l_2\ge\dots$ and $\l_i\neq0$ for only finitely many $i\ge1$. We denote by $\#\l$ the smallest $i\ge0$ such that $\l_i=0$, and by $|\l_i|$ the sum of all parts $\sum_{i\ge0}\l_i$. For any positive integer $\ell$, $m_\l(\ell)$ shall denote the multiplicity of $\ell$ as a part in $\l$. We denote by $P$ the set of all partitions.

For us, a \emph{pseudograph} is a graph with finitely many numbered vertices, with undirected edges without identity, where parallel edges between the same vertices and loops are allowed. For any pseudograph $\g$, we denote by $\#\g$ the total number of edges. We will consider $\NN$-labelled pseudographs, where a non-negative number is assigned to each edge as its label. For an $\NN$-labelled pseudograph $\g$, we denote by $|\g|$ the sum of all edge labels, and for any labelled edge $\g_i$ of $\g$ we denote by $m_\g(\g_i)$ its multiplicity in the graph $\g$. Usually, pseudographs will be of constant valence, that is, the number of ingoing or outgoing edges at each vertex is constant, where loops are counted twice.

\subsection{Deligne's interpolation category $\RepOt$} \label{sec::RepOt}
For any $t\in\kk$, we consider the following $\kk$-linear symmetric monoidal category $\cC_t$, the \emph{Brauer category} with parameter $t$: the objects of $\cC_t$ are denoted by $V^{\o k}$ for $k\ge0$. The hom-space $\cC_t(V^{\o k},V^{\o\ell})$ between objects $V^{\o k}, V^{\o \ell}$ for $k,\ell\ge0$ is given by the free $\kk$-vector space spanned by the set of arc diagrams $A^k_\ell$ with $k$ upper and $\ell$ lower points. Here, an arc diagram is a diagram with a given number of upper and lower points, and a set of strings (``arcs'') connecting each point to exactly one other point. The composition in $\cC_t$ is given by a family of operations $A^k_\ell\times A^\ell_m\to\cC_t(V^{\o k},V^{\o m})$ for $k,\ell,m\ge0$; for each pair of arc diagrams, we produce a new arc diagram by stacking them vertically and identifying the $\ell$ common points. In the resulting diagram, we remove all closed loops which consist of arcs among the $\ell$ common points only, and denote the number of removed loops by $L\ge0$. The result of the operation is the arc diagram without loops obtained in this way, multiplied by a scalar factor $t^L$. This is, where the parameter $t$ is used. We make $\cC_t$ a strict monoidal category by setting $V^{\o k}\o V^{\o\ell}:=V^{\o(k+\ell)}$ and by defining the tensor product of two arc diagrams as their horizontal concatenation. The symmetric braiding $c$ in $\cC_t$ is uniquely determined by its component $c_{V,V}$, which is the arc diagram $\cross$ with two upper and lower points.

The following are examples of compositions and tensor products of two arc diagrams: 
$$
\tp{1,3,0,2,12,0,4,13}\circ
\tp{1,12,0,2,14,0,3,4,0,11,13}
=t\,\tp{1,12,0,2,13,0,3,4}
,\qquad
\tp{1,2}\o
\tp{1,12,0,2,11}
= \tp{1,2,0,3,13,0,4,12}
 .
$$

Deligne's interpolation category $\RepOt$ is now defined as the pseudo-abelian additive completion of $\cC_t$, that is, we add formal direct sums and images of idempotents. Hence, any object in $\RepOt$ can be written as $\im(e)$, for an idempotent endomorphism $e$ of an object $X\in\RepOt$ which is a direct sum of objects from $\cC_t$, and $\im(e)$ is a direct summand of $X$ in $\RepOt$. $\RepOt$ is a $\kk$-linear symmetric monoidal whose structure is completely determined by the Brauer (sub)category $\cC_t$.

Deligne showed in \cite{Del}*{Prop.~9.4} that $\RepOt$ is the universal pseudo-abelian tensor category with an object of dimension $t$ which has a symmetric self-duality (``autodualité''). More precisely, this is a structure consisting of a triple $(V',f_\cup\,f_\cap)$, where $V'$ is an object and $f_\cup\:V'\o V'\to\one$, $f_\cap\:\one\to V'\o V'$ are morphisms in a pseudo-abelian tensor category $\cC$ with tensor unit $\one$ such that
\begin{equation} \label{eq::universal-property}
(V'\o f_\cup)(f_\cap\o V') = V' = (f_\cap\o V')(V'\o f_\cap) ,\quad
f_\cup = f_\cup c_{V',V'} ,\quad
f_\cap = c_{V',V'} f_\cap ,\quad
f_\cup f_\cap = t ,
\end{equation}
with $c_{V',V'}$ the braiding in $\cC$. It is an easy check to see that $(V,\cup,\cap)$ is indeed such a structure, with $V\in\RepOt$ as above and with $\cup$ and $\cap$ the respective arc diagrams with one arc and with two upper or lower points viewed as morphisms in $\RepOt$. Given any pseudo-abelian tensor category $\cC$, the symmetric monoidal functors from $\RepOt$ to $\cC$ are given exactly by the triples $(V',f_\cup,f_\cap)$ satisfying \Cref{eq::universal-property}.

Let $\SVec_\kk$ be the category of supervector spaces. By the universal property of $\RepOt$, for any $m,n\ge0$ with $t=m-2n$, there is a unique symmetric monoidal specialization functor $F_{m,2n}\:\RepOt\to\SVec_\kk$ which sends $V:=V^{\o 1}$ to the supervector space $V_\kk:=\kk^{m|2n}$, the arc diagram $|$ with one upper and one lower point to the identity on $V_\kk$, and the arc diagrams $\cup$ and $\cap$ to suitable evaluation and coevaluation morphism of $V_\kk$, respectively (this will be described more explicitly in \Cref{sec::specialization}). Viewing $V_\kk$ as the natural module for the orthosymplectic group $\OSp(m|2n)$, we can even consider $F_{m,2n}$ as a functor to the tensor subcategory of $\Rep\OSp(m|2n)$ which is generated by $V_\kk$. This functor is full (\cite{LZ}*{Cor.~5.8}, \cite{CH}*{Thm.~7.3(i)}).

\subsection{Symmetrization morphisms} \label{sec::symmetrization}
For any $V$ object in any $\kk$-linear symmetric monoidal category $\cC$ and any $n\ge0$, we use the notation $T^n V:=V^{\o n}$. Then we have a group homomorphism $S_n\to\Aut(T^n V)$, where the transposition $(i,i+1)\in S_n$ is sent to the automorphism given by the corresponding braiding 
$$ T^{i-1} V\o c_{V,V}\o T^{n-i-1}V \tforall 1\le i<n .
$$
We will often identify permutations in $S_n$ with their images under such a homomorphism. For any permutation $\s\in S_n$ and object $V$ as above, we denote by $\s^{(V)}$ the automorphism of $T^n V$ defined in this way.
We will also use the endomorphisms
$$
\sym_{\pm n}^{(V)} := \sum_{\s\in S_n} (\pm 1)^\s \s^{(V)} 
$$
of $V$, where $1^\s=1$ and $(-1)^\s$ is the sign of the permutation $\s$.

Whenever they exist, we denote the images of the endomorphisms just defined by
$$
S^{\pm n} V := \im(\sym_{\pm n}^{(V)}) .
$$
This is the case, for instance, if $\cC$ is abelian or, more generally, if $\cC$ is pseudo-abelian, because $\sym_{\pm n}^{(V)}/n!$ is an idempotent endomorphism.

For clarity and brevity, let us write $\L^n V:=S^{-n} V$ where appropriate.

We specialize these general definitions to $\cC=\RepOt$: for any transposition $\tau=(i,i+1)$ in $S_n$, for $1\le i<n$, the corresponding automorphism $\tau^{(V)}$ is given by the arc diagram
$$
 |^{i-1} \,\cross\, |^{n-i-1} .
$$
For any $\s\in S_n$, we denote by $\s^{(m)}$ the permutation in $S_{mn}$ which sends $mk-m+i$ to $m\s(k)-m+i$ for all $0\le k<n$ and $1\le i\le m$, that is, the permutation which permutes all $i$-th points in the tuples $(1,\dots,n),\dots,(mn-n+1,\dots,mn)$ synchronously. Interpreted as an automorphism of the object $T^m V \in\RepOt$, this is the same as $\s^{(T^m V)}$. 
As a consequence, the element
$$
\sym_{\pm n}^{(m)} := \sum_{\s\in S_n} (\pm 1)^\s \s^{(m)}
$$
in the group algebra $\kk S_n$, interpreted as an endomorphism of $T^m V$, is the same as $\sym_{\pm n}^{(T^m V)}$. For $m=1$, we may omit the superscript index, and write $\sym_{\pm n}$ for the endomorphism of the generating object $V$ of $\RepOt$. Let us also write $\alt_n:=\sym_{-n}$.

Finally, for any symmetric monoidal functor $F\:\RepOt\to\cC$, we have
$$
S^{\pm n} T^m F(V) = \im(\sym_{\pm n}^{(T^m F(V))}) \cong \im(F(\sym^{(m)}_{\pm n})) .
$$

\subsection{Bialgebras and smash products} Let $\cC$ be any symmetric monoidal category with tensor unit $\one$ and symmetric braiding $c$. An \emph{algebra} in $\cC$ is a triple $(A,\eta\:\one\to A,\mu\:A\o A\to A)$ such that
$$
\mu(H\o\eta)=\id_H=\mu(\eta\o H)
 , \quad
\mu(\mu\o H)=\mu(H\o\mu)
 .
$$
A \emph{bialgebra} in $\cC$ is a tuple $(H,\eta\:\one\to H,\mu\:H\o H\to H,\varepsilon\:H\to\one,\Delta\:H\to H\o H)$ such that $(H,\eta,\mu)$ is an algebra and, furthermore,
$$
(H\o\varepsilon)\Delta=\id_H=(\varepsilon\o H)\o\Delta
 , \quad
(\Delta\o H)\Delta=(H\o\Delta)\Delta
 ,
 $$
$$
\varepsilon\eta=\id_\one
 , \quad
\varepsilon\mu=\mu(\varepsilon\o\varepsilon)
 , \quad
\Delta\eta=\eta\o\eta
 , \quad
\Delta\mu=(\mu\o\mu)(H\o c_{H,H}\o H)(\Delta\o\Delta) .
$$
Here, the identities in the first line mean that $(H,\varepsilon,\Delta)$ is a coalgebra, while
the identities in the second line mean that the coalgebra structure maps are morphisms of algebras, and vice versa. We call $\eta$ \emph{unit}, $\mu$ \emph{multiplication}, $\varepsilon$ \emph{counit}, and $\Delta$ \emph{comultiplication}. $A$ or $H$ as above is called \emph{commutative} if $\mu=\mu c$, and $H$ is called \emph{cocommutative} if $\Delta=c\Delta$.

For any bialgebra $H$ in $\cC$, the hom-space $\Hom_\cC(H,H)$ is a monoid with identity element $\eta\varepsilon$, with respect to the operation
$$
f*g=\mu(f\o g)\Delta .
$$
$H$ is called a \emph{Hopf algebra} if $\id_H$ has an inverse $S$, called the \emph{antipode}, in this monoid.

Given a bialgebra $H$ in $\cC$, an \emph{$H$-module} is an object $V\in\cC$ together with a morphism $\phi\:H\o V\to V$ such that
$$
\phi(\eta\o V)=\id_V
 , \quad
\phi(H\o\phi)=\phi(\mu\o V)
 .
$$
In this case, for each $i\ge0$, the tensor power $T^iV$ is an $H$-module with a structure which can be defined recursively by $\phi_{T^0V}=\phi_\one=\varepsilon$, $\phi_{T^1 V}:=\phi$, and
$$
\phi_{T^{i+1} V} = (\phi_{T^iV}\o\phi)  (H\o c_{H,T^i V}\o V) (\Delta\o T^{i+1} V) .
$$
If $\cC$ is $\kk$-linear, then by restriction, the symmetric power $S^iV$ and the exterior power $S^{-i}V$ are $H$-modules, as well, if they exist. Finally, the tensor algebra, $TV:=\bigoplus_{i\ge0}T^iV$, the symmetric algebra $SV:=\bigoplus_{i\ge0}S^iV$, and the exterior algebra $\L V:=\bigoplus_{i\ge0}S^{-i}V$ are $H$-modules.

Now given a bialgebra $H$ and an $H$-module $V$, we define
$$
\ti c_{H,TV} := (\phi_{TV}\o H)(H\o c_{H,TV})(\Delta\o TV) \:  H\o TV\to TV\o H
 .
$$
Then $TV\o H$ is an algebra with
$$
\eta:=\eta_{TV}\o\eta_H
\ , \quad
\mu:=(\mu_{TV}\o\mu_{H})(TV\o\ti c_{H,TV}\o H)
 .
$$
This algebra is called the \emph{smash product} of $TV$ and $H$ and denoted $TV\rtimes H$. Similarly, we define the smash products $SV\rtimes H$ and $\L V\rtimes H$.

\subsection{PBW deformations} \label{sec::PBW}
A filtered algebra $A$ in a $\kk$-linear abelian symmetric monoidal category $\cC$ is called a \emph{PBW deformation} of a given graded algebra $A_0$ in $\cC$, if the associated graded algebra $\gr(A)$ of $A$ is isomorphic to $A_0$.

As a special case, the smash product $A:=TV\rtimes H$, for any bimodule $H$ and any $H$-module $V$ in $\cC$, is filtered with $F^i(TV\rtimes H):=\bigoplus_{0\le j\le i} T^i V\o H$. Then for any $\rho\in\{\pm1\}$, $\k\:S^{2\rho}V\to H$, let $I_\k$ be the (two-sided) ideal in $A$ generated by the image of the morphism $\iota_{S^{2\rho}V}-\k$, where $\iota_{S^{2\rho} V}\:S^{2\rho} V\to T^2 V$ is the natural embedding. Then the quotient algebra $A_\k:=A/I_\k$ is a filtered algebra, and we can ask if it is a PBW deformation of $A_0:=A/I_0$, i.e., the quotient algebra for the case $\k=0$. 

We will call PBW deformations $A_\k$ of $A_0$ as above \emph{PBW deformations of type $(H,V,\rho)$}, and the main goal of this paper will be to classify PBW deformations of certain types.

\section{Parametrizing morphism spaces} \label{sec::parametrizing}

As a first step of our classification of PBW deformations, we will derive a parametrization of bases in certain morphism spaces. Since the involved objects are given by combinations of symmetric or exterior powers, the basis elements with be symmetrizations of arc diagrams with respect to suitable group actions.

\begin{definition} 
For any $p,e,d\ge0$, we define the finite group 
$$
G_{p,e,d}:=S_e^p\times(S_2\wr S_d)=S_e^p\times (S_2^d\rtimes S_d) .
$$
\end{definition}

Recall from \Cref{sec::RepOt} that, for all $k,\ell\ge0$, $A^k_\ell$ denotes the set of arc diagrams with $k$ upper and $\ell$ lower points.

We have an action of the group $G_{p,e,d}$ on $A^{pe}_{2d}$ by permuting points, as follows: Each group $S_e$ acts on a set of $e$ adjacent upper points, grouping them $B_1:=(1,\dots,e)$, $\dots$, $B_p:=((p-1)e+1,\dots,pe)$. Each of the $d$ copies of $S_2$ in the wreath product $S_2\wr S_d$ acts on a pair of adjacent lower points, grouping them $(1,2),\dots,(2d-1,2d)$. Finally, the group $S_d$ in the wreath product permutes these pairs of lower points, preserving the order in each pair.

Let us define a datum which is assigned to any arc diagram $x\in A^{pe}_{2d}$. We say two (distinct) arcs in $x$ are \emph{adjacent} if there is a pair of lower points whose members are end points of one arc each. We group the arcs of $x$ into arc sequences such that any two consecutive arcs are adjacent in this sense. Each arc sequence may or may not contain upper points, but if it does, then the upper points are the start and end points of the entire arc sequence, as all other points must be lower points by construction. Now any sequence can be described by a tuple $(i,j,k)$ where $B_i$ and $B_j$ are the groups of upper points containing the start and end points if $1\le i<j\le e$, or $i=j=0$ if there are no upper points in the sequence, and $2k$ is the number of lower points in the sequence for some $k\ge0$. 

\begin{definition} For any arc diagram $x\in A^{pe}_{2d}$, let $\ASD(x)$ be the set $\{(i,j,k)\}$ describing all arc sequences; we call $\ASD(x)$ the \emph{arc sequence datum}.
\end{definition}

\begin{lemma} \label{lem::ASD}
$\ASD$ is a separating invariant for the action of $G_{p,e,d}$ on $A^{pe}_{2d}$.
\end{lemma}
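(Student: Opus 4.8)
The plan is to verify the two requirements of a separating invariant in turn: \textbf{(i)} $\ASD(g\cdot x)=\ASD(x)$ for all $g\in G_{p,e,d}$, and \textbf{(ii)} if $\ASD(x)=\ASD(y)$ then $x$ and $y$ lie in the same $G_{p,e,d}$-orbit.

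For \textbf{(i)}, observe that every $g\in G_{p,e,d}$ permutes the upper points only within the blocks $B_1,\dots,B_p$, and permutes the lower points by permuting the $d$ pairs $(1,2),\dots,(2d-1,2d)$ and optionally flipping within pairs. In particular $g$ preserves the partition of the lower points into these $d$ pairs, hence the adjacency relation on arcs, hence carries the arc sequences of $x$ bijectively to those of $g\cdot x$. For a sequence of type $(i,j,k)$, the number $2k$ of lower points it contains is preserved, and an upper endpoint of the sequence lying in block $B_i$ is sent into the same block $B_i$; so the triple $(i,j,k)$ is unchanged and $\ASD$ is invariant.

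For \textbf{(ii)}, the key structural observation is a normal form for a single arc sequence. First note that since each lower point is an endpoint of exactly one arc and each pair $(2\ell-1,2\ell)$ makes the two arcs at its members adjacent, both endpoints of a pair lie in the same arc sequence; thus the pair-partition of the $2d$ lower points refines the partition induced by the arc sequences, and an element of $S_2\wr S_d$ may be prescribed independently on the lower points of each sequence. Now, within one arc sequence, the arcs and the pairs each meet every lower point of the sequence exactly once, so walking along the sequence alternates arc-steps with pair-steps. Consequently a type-$(0,0,k)$ sequence is a $2k$-cycle whose edges are colored alternately ``arc'' and ``pair'', and a type-$(i,j,k)$ sequence with $i,j\ge 1$ is a path of $2k+1$ edges, starting and ending with an arc-step, from an upper point of $B_i$ to an upper point of $B_j$, again alternately colored. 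Such a two-colored cycle, resp. two-colored path with prescribed endpoint blocks, is unique up to a color-preserving isomorphism (which in the path case may also reverse the path, relevant when $i=j$); in other words, any two arc sequences of the same type admit a bijection between their lower points carrying arcs to arcs and pairs to pairs, matching upper endpoints block-by-block.

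Given this, suppose $\ASD(x)=\ASD(y)$, so the arc-sequence types of $x$ and $y$ agree with multiplicities; fix a type-preserving bijection between the arc sequences of $x$ and of $y$, and on each matched pair choose an isomorphism as above. Assembling these gives a bijection of all lower points preserving the pair-partition, hence an element of $S_2\wr S_d$, and a bijection of all upper points preserving the blocks $B_1,\dots,B_p$, hence an element of $S_e^p$ (here one uses that a type-$(i,j,k)$ sequence of $x$ has its upper endpoints in $B_i,B_j$, exactly as its matched sequence in $y$). The resulting $g\in G_{p,e,d}$ satisfies $g\cdot x=y$ by construction. I expect the main obstacle to be precisely the structural lemma of the previous paragraph — establishing that each arc sequence is a two-colored cycle or path and that such an object has a unique isomorphism type — together with the bookkeeping that the glued-together bijection genuinely lands in $G_{p,e,d}$ (i.e.\ respects blocks and pairs) rather than in a larger symmetric group; everything else is routine.
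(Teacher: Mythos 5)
Your proposal is correct and follows essentially the same route as the paper: invariance is checked on generators (or directly on the block/pair structure), and separation is proved by matching arc sequences of equal type and gluing pointwise identifications that respect the blocks $B_i$ and the lower pairs into an element of $G_{p,e,d}$. Your version merely makes explicit the alternating arc-step/pair-step structure that the paper leaves implicit.
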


\begin{proof} It is straight-forward to see that a set of generators of $G_{p,e,d}$ does not change the arc sequence datum, so $\ASD$ is an invariant. 

It remains to show that two arc diagrams $x$ and $x'$ lie in the same orbit if they have the same arc sequence datum. To see this, we identify the points along each arc sequence in $x$ with the points along a sequence of points of the same length in $x'$ such that all upper points in a tuple $B_i$ are identified with upper points within the same tuple $B_i$, and pairs of lower points are identified with pairs of lower points. The permutation representing this identification is in $G_{p,e,d}$.
\end{proof}

Recall from \Cref{sec::graphs-partitions} that a pseudograph is an undirected graph with multiple edges (without identity) and loops. We consider $e$-valent $\NN$-edge-labelled pseudographs for some $e\ge0$, that is, we require the number of edges adjacent to any vertex to be constant (counting loops twice), and to any edge there is a non-negative integer associated. 

\begin{definition} For $p,e\ge0$, let $\ti\Gamma_{p,e}$ be the set of $\NN$-edge-labelled $e$-valent pseudographs with vertices $\{1,\dots,p\}$.
\end{definition}

For every $\g\in\ti\Gamma_{p,e}$, we fix an arbitrary listing $(\g_{i,1},\g_{i,2},|\g_i|)_{1\le i\le ep}\subset\{1,\dots,p\}^2\times\NN$ of the edges with their vertices and their labels. We pick $b_{\g,i,j}\in B_{\g_{i,j}}$ such that $\{b_{\g,i,j}\}_{1\le i\le pe,1\le j\le 2}=\{1,\dots,pe\}$. Then the sum of edge labels $|\g|$ can be expressed as $|\g|=|\g_1|+\dots+|\g_{pe}|$.

\begin{definition} \label{def::x-g-l}
For all $p,e\ge0$, $\g\in\ti\Gamma_{p,e}$, $\l\in P$, we set
$$
x(\g,\l) := 
\bigotimes_{\g_i} |^{(\g'_{i,1})}\cap^{|\g_i|}|^{(\g'_{i,2})}
\o
\bigotimes_{\l_j} \arch{\cap^{\l_j-1}}
\quad\in A^{pe}_{2(|\g|+|\l|)} ,
$$
with the following convention: the order of the tensor product factors is left-to-right, each cap symbol $\cap$ indicates an arc connecting two lower points, each bar symbol $|$ indicates an arc connecting an upper point with a lower point, where the superscript indices in parentheses to the right of any bar $|$ indicate the upper point the corresponding arc is connected to; the lower points for all arcs are given by the order in which appear in the vertical concatenation. 
\end{definition}

In words, $x(\g,\emptyset)$ is an arc diagram with $pe$ upper points and $2|\g|$ lower points. The upper points are split into $p$ groups $B_1,\dots,B_p$ of $e$ adjacent points. To each group we associate a vertex of the graph. Each edge $\{\g_{i,1},\g_{i,2}\}$ with label $|\g_i|$ is turned into a sequence of $|\g_i|+1$ arcs, starting at an upper point in the group $B_{\g_{i,1}}$ ending in an upper point in the group $B_{\g_{i,2}}$, via a total of $2|\g_i|$ intermediate adjacent lower points. The exact start point and end point within the respective group corresponds to a choice of the $(b_{\g,i,j})$. 

As we are interested in arc diagrams only up to the action of $G_{p,e,d}$, this choice and the choice of the listing $(\g_{i,1},\g_{i,2},|\g|)$ for a given graph $\g$ are not relevant for our purposes. However, we want to fix these choices once and for all.

\begin{lemma} \label{lem::choices}
The orbit of $x(\g,\l)\in A^{pe}_{2d}$ under the action of $G_{p,e,d}$ is independent of the choice of the edge listing $(\g_{i,1},\g_{i,2},|\g_i|)$ and of the exact upper points $\{b_{\g,i,j}\}_{i,j}$ for all edges.
\end{lemma}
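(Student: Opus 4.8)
The plan is to deduce the statement from \Cref{lem::ASD}: since $\ASD$ is a separating invariant for the $G_{p,e,d}$-action on $A^{pe}_{2d}$, it suffices to show that $\ASD\big(x(\g,\l)\big)$ does not depend on the edge listing $(\g_{i,1},\g_{i,2},|\g_i|)_i$ nor on the choice of upper endpoints $b_{\g,i,j}\in B_{\g_{i,j}}$. Once this is established, any two arc diagrams obtained from \Cref{def::x-g-l} for two different sets of choices lie in $A^{pe}_{2d}$ for the same $d$ and have the same arc sequence datum, hence belong to the same $G_{p,e,d}$-orbit, which is exactly the claim.

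The first step is to read off the arc sequences of $x(\g,\l)$ directly from \Cref{def::x-g-l}. The diagram is a horizontal concatenation of factors, one factor $|^{(\g'_{i,1})}\cap^{|\g_i|}|^{(\g'_{i,2})}$ for each edge $\g_i$ of $\g$ and one factor $\arch{\cap^{\l_j-1}}$ for each part $\l_j$ of $\l$. Each such factor contributes an \emph{even} number of lower points, so the pairs of lower points on which the wreath-product part of $G_{p,e,d}$ acts never straddle the boundary between two factors; consequently each arc sequence of $x(\g,\l)$ is contained in a single factor. Inside the factor of an edge $\g_i$ one checks, using the adjacency relation of arcs via pairs of lower points, that its $|\g_i|+1$ arcs chain up into exactly one arc sequence, whose two boundary upper points are $b_{\g,i,1}\in B_{\g_{i,1}}$ and $b_{\g,i,2}\in B_{\g_{i,2}}$ and which passes through $2|\g_i|$ lower points; its associated tuple therefore records the (unordered) pair $\{\g_{i,1},\g_{i,2}\}$ together with the number $|\g_i|$, independently of the chosen endpoints. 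Inside the factor of a part $\l_j$, all of its arcs likewise chain up into a single arc sequence with no upper points and $2\l_j$ lower points, i.e. the tuple $(0,0,\l_j)$.

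It then follows that $\ASD(x(\g,\l))$ is precisely the collection consisting of the tuple $(\g_{i,1},\g_{i,2},|\g_i|)$ for each edge $\g_i$ of $\g$ and the tuple $(0,0,\l_j)$ for each part $\l_j$ of $\l$. Reordering the edge listing merely permutes the tensor factors and hence reindexes this collection, while replacing any $b_{\g,i,j}$ by another point of the same group $B_{\g_{i,j}}$ changes neither the group index $\g_{i,j}$ nor the label $|\g_i|$; so the collection is unchanged, and $\ASD(x(\g,\l))$ depends only on $\g$ and $\l$. Applying \Cref{lem::ASD} then completes the proof. The only delicate point is the bookkeeping in the middle step — verifying that within each factor the arcs assemble into precisely one arc sequence with the stated boundary, and that distinct factors do not interact because each contributes an even number of lower points — but this is routine from the definition of $x(\g,\l)$ and of arc adjacency.
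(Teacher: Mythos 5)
Your proof is correct and follows the same route as the paper: the paper's (one-line) argument is precisely that neither choice affects the arc sequence datum, which by \Cref{lem::ASD} determines the $G_{p,e,d}$-orbit. You have simply spelled out the computation of $\ASD(x(\g,\l))$ — namely that it consists of the tuples $(\g_{i,1},\g_{i,2},|\g_i|)$ and $(0,0,\l_j)$ — which the paper leaves implicit.
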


\begin{proof} Both choices do not affect the arc sequence datum of the arc diagrams constructed.
\end{proof}

\begin{example} For $p=2$, $e=4$, the following represents an element $\g\in\ti\Gamma_{p,e}$ and one of the possible arc diagrams $x(\g,\emptyset)$ associated to it. Note the $p=2$ groups consisting of $e=4$ upper points each, and the $|\gamma|=3$ pairs of lower points.
$$
\g=
\tikz[baseline=(a1.base)]{
\mydots{2} \myloop{a1}{1} \myloop{a2}{0}
\draw[-] (a1) to[out=30,in=150] node[midway,above]{0} (a2);
\draw (a1) to[out=-30,in=-150] node[midway,below]{2} (a2);
} 
,\qquad x(\g,\emptyset)=
\tpa{2}{4}{3}{1,11,0,12,2,0,3,13,0,14,15,0,16,6,0,4,5,0,7,8}.
$$
\end{example}

For a group $G$ acting on a set $X$ and an element $x\in X$, we denote by 
$G_x := \{ g\in G: g\cdot x=x \}$ the \emph{stabilizer (sub)group}. Also, let
$D_\ell := \langle a,b: a^2=b^\ell=1, aba=b^{-1} \rangle$
be the dihedral group of order $2\ell$ for any $\ell\ge1$. As is customary, we call $a$ the \emph{reflection} and $b$ the \emph{rotation} in $D_\ell$. For any group $G$ and $n\ge1$, denote by $G\wr S_n$ the \emph{wreath product} group, which can be viewed as a semidirect product $G^n\rtimes S_n$, where the action of $S_n$ on the product group $G^n$ is by permutation of the factors.

\begin{lemma} \label{lem::stabilizer}
The stabilizer group of the arc diagram $x=x(\g,\l)$ with respect to the action of $G=G_{p,e,d}$ on $A^{pe}_{2d}$ is given by
\begin{align*}
G_x 
&\cong
\prod_{1\le i\le j\le e,\ell\ge0}
(S_2^{\delta_{ij}}\wr S_{m_\g((i,j,\ell))})
\times \prod_{\ell\ge0}
(D_{\ell} \wr S_{m_\l(\ell)}) 
\\
&= \prod_{1\le i\le j\le e,\ell\ge0}
(S_2^{\delta_{ij}m_\g((i,j,\ell))}\rtimes S_{m_\g((i,j,\ell))})
\times \prod_{\ell\ge0}
(D_{\ell}^{m_\l(\ell)}\rtimes S_{m_\l(\ell)}) ,
\end{align*}
where the groups $S_{m_\g((i,j,\ell))}$ and $S_{m_\l(\ell)}$ permute arc sequences of a given type, either with or without upper points, the generators in each $S_2$ invert the order of all upper and lower points in an arc sequence with upper points within the same block of $e$ upper points, the reflection in each $D_{\ell}$ reverses the order of all lower point in an arc sequence without upper points, and the rotation in $D_{\ell}$ rotates the $\ell$ pairs of adjacent lower points in an arc sequence without upper points.
\end{lemma}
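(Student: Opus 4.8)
The plan is to leverage that $\ASD$ separates $G_{p,e,d}$-orbits (\Cref{lem::ASD}) in order to reduce the computation of $G_x$ to a local analysis of the arc sequences that make up $x=x(\g,\l)$. Throughout write $G=G_{p,e,d}$.

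First I would record the constraints that an element $g\in G_x$ must satisfy. The subgroup $S_e^p$ stabilizes each block $B_i$ of $e$ upper points setwise, and $S_2\wr S_d$ stabilizes the collection of $d$ distinguished pairs of lower points, hence also the adjacency relation on arcs. Since $g\cdot x=x$, the permutation $g$ maps the arc set of $x$ to itself, and because it respects arc-adjacency it permutes the arc sequences of $x$ — type-preservingly, since the block containing an upper endpoint, whether an endpoint is upper or lower, and the number of distinguished pairs contained in a sequence are all $g$-invariant. Consequently the point set of each \emph{type stratum} — the union of all arc sequences of one fixed type — is $g$-stable, so $g$ may be prescribed independently on each stratum; conversely, since preservation of the blocks $B_i$ and of the distinguished pairs can be checked stratum by stratum, any such collection of prescriptions assembles to an element of $G$. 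This yields a direct product decomposition $G_x=\prod_{\mathcal T}G_x^{\mathcal T}$ over the types $\mathcal T$ occurring in $x$, where $G_x^{\mathcal T}$ is the group of $G$-realizable symmetries of the stratum of type $\mathcal T$.

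Next I would treat a single stratum of type $\mathcal T$, say containing $m_{\mathcal T}$ arc sequences. By the identification constructed in the proof of \Cref{lem::ASD}, any two arc sequences of the same type are matched by a block- and pair-preserving bijection, i.e. by an element of $G$; hence $G_x^{\mathcal T}$ surjects onto the symmetric group $S_{m_{\mathcal T}}$ permuting these sequences, and a fixed choice of such identifications splits the surjection, so $G_x^{\mathcal T}\cong H_{\mathcal T}\wr S_{m_{\mathcal T}}\cong H_{\mathcal T}^{m_{\mathcal T}}\rtimes S_{m_{\mathcal T}}$, with $H_{\mathcal T}$ the group of $G$-realizable symmetries of one arc sequence of type $\mathcal T$. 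Reading off from $\g$ and $\l$, the multiplicity $m_{\mathcal T}$ equals $m_\g((i,j,\ell))$ for sequences with upper endpoints in $B_i,B_j$ and $\ell$ distinguished pairs, and $m_\l(\ell)$ for sequences without upper endpoints and $\ell$ distinguished pairs.

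It remains to compute $H_{\mathcal T}$ in the two cases, which I expect to be the main obstacle: the delicate point is to distinguish the abstract symmetries of an arc sequence from those actually induced by elements of $G$. An arc sequence with upper endpoints is an abstract path with two marked ends, so its symmetry group has order at most $2$, generated by the reversal; the reversal interchanges the two upper endpoints, hence is realizable inside $S_e^p\times(S_2\wr S_d)$ precisely when those two points lie in a common block, that is, precisely when the sequence comes from a loop of $\g$. This gives $H_{\mathcal T}=S_2^{\delta_{ij}}$. An arc sequence without upper endpoints is the rainbow of $\ell$ nested arcs on $2\ell$ lower points attached to a part $\ell$ of $\l$; tracing the adjacency relation shows that its $\ell$ distinguished pairs are cyclically arranged, so the symmetries realizable inside $S_2\wr S_d$ — the cyclic rotations of the pairs together with the reflection that reverses the order of all lower points — are exactly the dihedral group $D_\ell$ (in particular its action on the set of $\ell$ pairs is dihedral rather than the full symmetric group once $\ell\ge4$, and one checks that nothing smaller occurs). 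Substituting these $H_{\mathcal T}$ and $m_{\mathcal T}$ into $G_x=\prod_{\mathcal T}H_{\mathcal T}\wr S_{m_{\mathcal T}}$ gives the stated isomorphism, and the description of the generators of each factor is read off from the symmetries just exhibited.
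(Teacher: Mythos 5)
Your proposal is correct and follows essentially the same route as the paper's proof: elements of $G$ preserve arc sequences and their types, so the stabilizer decomposes as a product over types of wreath products of the within-sequence symmetry groups, which you then identify as $S_2^{\delta_{ij}}$ (reversal, realizable only for loops) and $D_\ell$ (rotations and reflections of the cyclically arranged pairs), exactly as in the paper. Your write-up is merely more explicit about the splitting that produces the wreath-product structure; the one cosmetic slip (the lower-point arc sequence is a cycle of adjacent caps closed by an outer arch, not a nested rainbow) does not affect the argument.
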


\begin{proof} We have seen above that the permutations of upper and lower points given by elements of $G$ preserve upper and lower points, adjacent arcs and, hence, also arc sequences. So any element in the stabilizer in question is uniquely determined by how it permutes arc sequences of the same shape, and by how it permutes points within each arc sequence.

Since even within one arc sequence, adjacent arcs are preserved, the only permutations possible within an arc sequence with upper points is the one interchanging the two upper points and reversing all lower points, which is possible only if the two upper points are in the same block of upper points. Again, since adjacent arcs are preserved, the permutations within an arc sequence consisting of only lower points are given exactly by the images of, say, the first two adjacent lower points. These must form another pair of adjacent lower points. The permutation of all points of the considered arc sequence now is a rotation if the order of the two adjacent points is preserved, or a rotation followed by a reflection otherwise.

This yields the asserted description of the stabilizer group.
\end{proof}

We will consider a symmetrization map $f$ for the action of $G_{p,e,d}$ on $A^{pe}_{2d}$ with respect to a (sign) character.

\begin{definition}
For all $p,e,d,\ge0$, we define a character $\chi_{p,e,d}\colon G_{p,e,d}=S_e^p\times (S_2^d\rtimes S_d)\to\kk^\times$ by
$$
\chi_{p,e,d}((\theta,(\tau,\sigma)))
= (-1)^\theta (-1)^\tau
$$
for all $\theta\in S_e^p$, $\tau\in S_2^d$, $\s\in S_d$.
\end{definition}

We will omit the indices $p,e,d$ whenever they are clear from the context.



\begin{definition} Let $P^+\subset P$ be the set of partitions with only even parts, and let $\Gamma_{p,e}\subset\ti\Gamma_{p,e}$ be the subset of those pseudographs whose loops are labelled with odd numbers only. \end{definition}

\begin{proposition} \label{prop::orbit-reps} For any $p,e,d\ge0$, the set 
$$
\{x(\g,\l) : \g\in\Gamma_{p,e},\l\in P^+,|\g|+|\l|=d\}
$$
is a set of representatives $x$ of exactly those orbits of $G_{p,e,d}$ acting on $A^{pe}_{2d}$ for which $\chi|_{(G_{p,e,d})_x}\equiv 1$.
\end{proposition}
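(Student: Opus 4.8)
The plan is to prove the proposition in two directions: first, that each $x(\g,\l)$ with $\g\in\Gamma_{p,e}$ and $\l\in P^+$ satisfies $\chi|_{G_x}\equiv 1$; second, that every orbit with this property contains such a representative, and that distinct pairs $(\g,\l)$ give distinct orbits. The key computational input is the explicit description of the stabilizer $G_x$ from \Cref{lem::stabilizer}, combined with the fact (\Cref{lem::ASD}, \Cref{lem::choices}) that orbits are classified by the arc sequence datum $\ASD(x)$.

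For the first direction, I would take $x=x(\g,\l)$ and evaluate $\chi$ on the generators of $G_x$ listed in \Cref{lem::stabilizer}. The symmetric group factors $S_{m_\g((i,j,\ell))}$ and $S_{m_\l(\ell)}$ permute whole arc sequences of equal shape; such a permutation moves blocks of upper points as blocks and moves pairs of lower points as pairs, so its $S_e^p$-component is a product of block permutations (even, since each block is moved wholesale to another block via an order-preserving identification — actually one must check: permuting two blocks of size $e$ contributes $\sgn = (-1)^{e}$ per transposition to the $S_e^p$-part, but these come in the wreath product paired with a permutation, and one should verify the total sign is $1$; the cleanest argument is that these generators lie in the kernel of $\chi$ because their $S_2^d$-component is trivial and their $S_e^p$-component permutes coordinates by "block translation", whose sign I would compute directly to be even as a permutation of $\{1,\dots,pe\}$ — transposing two size-$e$ blocks is a product of $e$ disjoint transpositions, sign $(-1)^e$; so actually one needs $e$ even, OR the contribution cancels — I would need to recheck this against the statement, which allows all $e$, so the resolution must be that block-transpositions pair with the $S_d$-action in a sign-compatible way, or that $\chi$ is defined so that only the $S_2^d$ and $S_e^p$ parts matter and the claim forces a constraint I'm missing). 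Setting that subtlety aside, the two genuinely restrictive generators are: the $S_2$ inside $S_2^{\delta_{ij}}\wr S_{m_\g((i,j,\ell))}$, which acts only when $i=j$ (a loop), reversing one arc sequence through a loop; and the reflection in $D_\ell\wr S_{m_\l(\ell)}$, which reverses an all-lower arc sequence. I would show the loop-reversing $S_2$ has $\chi$-value $(-1)^{\ell+1}$ (it swaps the two upper endpoints, an odd permutation of $B_i$, times a reversal of $2\ell$ lower points contributing to the $S_2^d$-part a product of $\ell$ transpositions of pairs... hence value $(-1)\cdot(-1)^{\ell}$), which is $1$ exactly when $\ell$ is odd — this is precisely the defining condition for $\g\in\Gamma_{p,e}$. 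Similarly the reflection in $D_\ell$ reverses $2\ell$ lower points without touching upper points; its $S_2^d$-component is a product of transpositions whose count I would compute to give $\chi$-value $(-1)^\ell$, trivial exactly when $\ell$ is even — precisely the condition $\l\in P^+$. The rotation in $D_\ell$ cyclically permutes $\ell$ pairs, its $S_2^d$-part trivial, hence in $\ker\chi$. So $\chi|_{G_x}\equiv1$ iff all loops of $\g$ have odd label and all parts of $\l$ are even.

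For the second direction, let $x\in A^{pe}_{2d}$ be any arc diagram with $\chi|_{G_x}\equiv1$. Decompose $x$ into arc sequences and read off $\ASD(x)=\{(i,j,k)\}$. An arc sequence of type $(i,j,k)$ with $i<j$ is a "non-loop edge" of weight $k$ between blocks $i,j$; one with $i=j\ge1$ is a "loop at $i$" of weight $k$; one with $i=j=0$ is an "all-lower" sequence of $k$ pairs, i.e.\ a part of size $k$ of a partition. This data assembles into a pseudograph $\g$ (with labels $\g_i = k$) and a partition $\l$; by \Cref{lem::ASD} and \Cref{lem::choices}, $x$ lies in the orbit of $x(\g,\l)$. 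By the computation above, $\chi|_{G_x}\equiv1$ forces every loop label to be odd (so $\g\in\Gamma_{p,e}$) and every part of $\l$ to be even (so $\l\in P^+$). Finally, injectivity: if $x(\g,\l)$ and $x(\g',\l')$ lie in the same orbit, then by \Cref{lem::ASD} they have the same arc sequence datum, and unwinding the construction in \Cref{def::x-g-l} recovers $\g$ and $\l$ uniquely from $\ASD$ (the multiset of types $(i,j,k)$ with $i<j$ or $i=j\ge1$ determines the edge multiset of $\g$, and the multiset of $k$'s with $i=j=0$ determines $\l$), so $(\g,\l)=(\g',\l')$.

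\textbf{Main obstacle.} The bookkeeping of signs is the crux: I need to compute, for each type of stabilizer generator, the value of $\chi$ — i.e.\ the product of the sign of the induced permutation of the $pe$ upper points and the sign of the induced permutation of the $d$ lower-point-pairs — and confirm these vanish on the $S_d$- and rotation-type generators while pinning down the parity condition exactly on the loop-reversals and $D_\ell$-reflections. The potentially delicate point is whether the block-permutation generators (the $S_{m_\g}$, $S_{m_\l}$ factors) really lie in $\ker\chi$ for all $e$, or whether some interaction forces a hidden constraint; I expect the resolution is that transposing two equal-shape arc sequences through non-loop edges induces an \emph{even} permutation of upper points once combined with the simultaneous transposition of the corresponding lower-point pairs (the two contributions $(-1)^e$ from upper points and $(-1)^{(\text{pairs})}$ from lower points being separately accounted by $\chi$), and a careful count shows the product is $+1$. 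I would verify this by writing out one representative generator explicitly.
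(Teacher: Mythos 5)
Your overall strategy matches the paper's: use \Cref{lem::ASD} to get a complete, non-redundant set of orbit representatives $x(\g,\l)$ with $\g\in\ti\Gamma_{p,e}$, $\l\in P$, and then evaluate $\chi$ on the generators of the stabilizer described in \Cref{lem::stabilizer}. Your two decisive sign computations are correct: the $S_2$-generator reversing a loop sequence with label $\ell$ has $\chi$-value $(-1)\cdot(-1)^{\ell}$ (one transposition inside a block $B_i$ from swapping the two upper endpoints, and all $\ell$ adjacent lower pairs internally flipped by the reversal), and the reflection acting on an all-lower sequence of $\ell$ pairs has $\chi$-value $(-1)^{\ell}$; these give exactly the conditions ``loop labels odd'' ($\g\in\Gamma_{p,e}$) and ``parts even'' ($\l\in P^+$). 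The surjectivity/injectivity part via the arc sequence datum is also as in the paper.

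The one genuine gap is the point you flag yourself, and your tentative resolution of it rests on a misreading. The generators in the factors $S_{m_\g((i,j,\ell))}$ and $S_{m_\l(\ell)}$ do \emph{not} transpose whole blocks $B_i$ of $e$ upper points; indeed $G_{p,e,d}=S_e^p\times(S_2\wr S_d)$ contains no block permutations at all (the $S_p$-action permuting the blocks is introduced only after \Cref{cor::Hom-tensor} and is not part of $G_{p,e,d}$). An arc sequence of type $(i,j,\ell)$ meets the upper points in exactly one point of $B_i$ and one point of $B_j$ (or two points of $B_i$ if it is a loop), not in whole blocks. Hence transposing two same-type arc sequences induces exactly two transpositions inside blocks (one in $B_i$ and one in $B_j$, or two in $B_i$), so its $S_e^p$-sign is $+1$ for every $e$; the lower points move as whole adjacent pairs with internal order preserved, so the $S_2^d$-component is trivial, and the $S_d$-component is invisible to $\chi$ by definition. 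So these generators lie in $\ker\chi$ unconditionally and there is no hidden constraint on $e$. With this point repaired, your argument is complete and coincides with the paper's proof, which leaves the same sign computations implicit.
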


\begin{proof} By \Cref{lem::ASD}, the orbit of $x$ is uniquely determined by the arc sequence datum $\ASD(x)$. Now we associate a pseudograph $\g\in\ti\Gamma_{p,e}$ to the set of all arc sequences in $x$ which contain upper points by creating one edge for each sequence between the vertices $i,j$ if the upper points lie in $B_i$ and $B_j$, respectively, and by labelling this edge with the number of pairs of lower points the arc sequence passes through. We associate a partition $\l\in P$ to the set of all arc sequences in $x$ which do not contain upper points, whose parts are the number of lower pairs each such sequence passes through. Then as $x(\g,\l)$ has the same arc sequence datum as $x$, the set $\{x(\g,\l):g\in\ti\Gamma_{p,e},\l\in P\}$ is a complete set of orbit representatives of $G_{p,e,d}$ acting on $A^{pe}_{2d}$.

Now consider $H:=(G_{p,e,d})_x\subset G_{p,e,d}$ for $x=x(\g,\l)$. It follows directly from the description of $H$ given in \Cref{lem::stabilizer}, that a set of generators $g$ of $H$ satisfies $\chi(g)=1$ if and only if $\g\in\G_{p,e}$ and $\l\in P^+$.
\end{proof}

This will imply a result on bases in hom-spaces with the following general auxiliary result.

\begin{lemma} \label{lem::general-group-action} Let $G$ be a finite group acting on a set $S$, let $\chi\: G\to\kk^\times$ be a group homomorphism. Define $f\colon\kk S\to\kk S$, $x\mapsto \sum_{g\in G}\chi(g) (g\cdot x)$. Then
$$
f(x) = 
\begin{cases}
\#G_x \sum_{gG_x\in G/G_x} \chi(g) (g\cdot x) & \chi|_{G_x}\equiv1 \\
0 & \chi|_{G_x}\not\equiv1
\end{cases}
.
$$
In particular, let $\ti X\subset S$ be any set of orbit representatives, and $X:=\{x\in\ti X:\chi|_{G_x}\equiv1 \}$. 
Then $f$ restricts to a linear isomorphism $f|_{\kk X}\colon \kk X\to f(\kk S)$ and $f|_{\ti X\setminus X}\equiv0$.
\end{lemma}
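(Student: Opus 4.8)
The plan is to first prove the explicit formula for $f$ on a single basis element $x\in S$ by splitting the sum over $G$ along the cosets of the stabilizer $G_x$, and then to derive the assertion about bases from this formula together with the disjointness of distinct $G$-orbits.

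First I would fix $x\in S$. Since $g\cdot x$ depends only on the coset $gG_x$, choosing representatives $g_1,\dots,g_r$ of $G/G_x$ yields
$f(x)=\sum_{i}\bigl(\sum_{h\in G_x}\chi(g_ih)\bigr)(g_i\cdot x)=\sum_i\chi(g_i)\bigl(\sum_{h\in G_x}\chi(h)\bigr)(g_i\cdot x)$.
If $\chi|_{G_x}\equiv1$ this equals $\#G_x\sum_i\chi(g_i)(g_i\cdot x)$, which is well defined independently of the chosen representatives because both $\chi$ and the action on $x$ are constant along cosets of $G_x$; this is the first case. If $\chi|_{G_x}\not\equiv1$, pick $h_0\in G_x$ with $\chi(h_0)\ne1$ and substitute $g\mapsto gh_0$ in the defining sum --- a bijection of $G$ with $gh_0\cdot x=g\cdot x$ --- which gives $f(x)=\chi(h_0)f(x)$, hence $f(x)=0$. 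This establishes the displayed case distinction; note that the integer $\#G_x$ is invertible in $\kk$ since $\kk$ has characteristic zero.

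Next I would treat the assertion about bases. For arbitrary $x\in S$ write $x=g_0\cdot\ti x$ with $\ti x\in\ti X$; substituting $g\mapsto gg_0\inv$ in the defining sum gives $f(x)=\chi(g_0)\inv f(\ti x)$. Hence $f(\kk S)$ is spanned by $\{f(\ti x):\ti x\in\ti X\}$, and by the first part $f(\ti x)=0$ whenever $\ti x\in\ti X\setminus X$; this already proves $f|_{\ti X\setminus X}\equiv0$ and that $f(\kk S)=\operatorname{span}_\kk\{f(x):x\in X\}$. For injectivity of $f|_{\kk X}$: by the first part, for $x\in X$ the vector $f(x)$ lies in the subspace $\kk(G\cdot x)$ spanned by the orbit of $x$, and, taking $g_1=e$, the coefficient of the basis vector $x$ in $f(x)$ equals $\#G_x\ne0$ (no coset of $G_x$ other than $G_x$ itself moves $x$ to $x$). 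Since distinct elements of $\ti X$ lie in distinct, hence disjoint, orbits, the subspaces $\kk(G\cdot x)$ for $x\in X$ are in direct sum; therefore a relation $\sum_{x\in X}c_x f(x)=0$ forces each $c_x f(x)=0$, and comparing coefficients of $x$ yields $c_x=0$. Thus $f|_{\kk X}\colon\kk X\to f(\kk S)$ is a linear isomorphism.

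I do not expect a genuine obstacle here; the only points requiring a little care are the two reindexing bijections (for the vanishing of $f(x)$ when $\chi|_{G_x}\not\equiv1$, and for the identity $f(g_0\cdot\ti x)=\chi(g_0)\inv f(\ti x)$), and the use of characteristic zero to ensure that the scalar $\#G_x$ is invertible --- without it both the normalizing factor in the formula and the linear independence argument would break down.
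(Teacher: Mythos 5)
Your proof is correct and follows essentially the same route as the paper: the displayed formula is obtained by decomposing the sum over $G$ along cosets of $G_x$ and observing that $\sum_{h\in G_x}\chi(h)$ is either $\#G_x$ or $0$ (your substitution $g\mapsto gh_0$ is the same cancellation trick the paper applies to the inner sum). You additionally spell out the ``in particular'' part, which the paper leaves implicit; your argument there (disjointness of orbits plus the nonzero coefficient $\#G_x$ of $x$ in $f(x)$) is valid.
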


\begin{proof} 
Let $(g_i)_i$ be any complete set of coset representatives such that $G=\bigsqcup_i g_i G_x$. Then
$$
f(x) = \Big( \sum_{g\in G_x} \chi(g) \Big) \sum_i \chi(g_i) (g_i\cdot x).
$$
As
$$
\sum_{g\in G_x} \chi(g) = \chi(h) \sum_{g\in G_x} \chi(g)
$$
for all $h\in G_x$, the sum $\sum_{g\in G_x} \chi(g)$ is zero if $\chi|_{G_x}\not\equiv1$, and is $\# G_x$ if $\chi|_{G_x}\equiv 1$. Note that the formula for $f(x)$ is independent of the choice of coset representatives.
\end{proof}

Recall from \Cref{sec::RepOt} that the arc diagrams $A^k_\ell$ form a basis of the morphism space $\Hom(T^k V,T^\ell V)$ in $\RepOt$. We will now describe bases in objects given by symmetric or exterior products, or combinations thereof.  In the following, $\Hom$ always refers to the category $\RepOt$.

\begin{definition} For all $p,e,d\ge0$ and $G:=G_{p,e,d}$, we define $f_{p,e,d}\:A^{pe}_{2d}\to\Hom(T^{pe}V,T^{2d}V)$ by setting
$$
f_{p,e,d}(x):=
\sum_{g\in G} \chi_{p,e,d}(g)(g\cdot x)
\tforall x\in A^{pe}_{2d} .
$$
\end{definition}

Recall from \Cref{sec::symmetrization} that in our situation, for every $X\in\RepOt$ and $n\ge0$, symmetric and exterior powers exists and are given by
$$
S^{\pm n}X = \im(\sym^{(X)}_{\pm n}) .
$$
As $\sym^{(X)}_{\pm n}$ can be normalized to be an idempotent, we may view these objects as direct summands of the corresponding tensor powers $X^{\o n}$.

\begin{definition} For any $d\ge0$, let us define
$$
s_d := \sym^{(\L^2 V)}_d = \sym^{(2)}_{d}\circ\sym_{-2}^{\o d}=\sym_{-2}^{\o d}\circ\sym^{(2)}_{d} ,
$$
the endomorphism defining the object $S^d\L^2 V$ in $\RepOt$.
\end{definition}

\begin{corollary} \label{cor::Hom-tensor}
For all $p,e,d\ge0$, the set
$$
\{ s_d\circ x(\g,\l)|_{T^p\L^e V} :\g\in\Gamma_{p,e},\l\in P^+,|\g|+|\l|=d\}
$$
is a basis of $\Hom(T^p\L^e V,S^d\L^2V)$.
\end{corollary}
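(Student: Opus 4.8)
The plan is to recognize the symmetrization map $f_{p,e,d}$ as composition with the categorical (anti)symmetrizers defining $T^p\L^e V$ and $S^d\L^2 V$, and then to read the claim off from \Cref{lem::general-group-action} and \Cref{prop::orbit-reps}. The central computation is the identity
$$
f_{p,e,d}(x)\;=\;s_d\circ x\circ\sym_{-e}^{\o p}\qquad\text{for every arc diagram }x\in A^{pe}_{2d},
$$
which I would prove as follows. Since $A^{pe}_{2d}$ is a basis of $\Hom(T^{pe}V,T^{2d}V)$ and $G_{p,e,d}$ acts by permuting points, acting by $\theta\in S_e^p$ amounts to precomposing $x$ with the permutation endomorphism $\theta^{(V)}$ of $V^{\o pe}$, while acting by $(\tau,\sigma)\in S_2\wr S_d$ amounts to postcomposing with the permutation endomorphism $\sigma^{(2)}\circ\tau^{(V)}$ of $V^{\o 2d}$ that represents $(\tau,\sigma)$ (the exact conventions for inverses and for the wreath-product action only relabel summands and leave $(-1)^\theta$ and $(-1)^\tau$ unchanged, so they play no role below). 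Inserting $\chi_{p,e,d}((\theta,(\tau,\sigma)))=(-1)^\theta(-1)^\tau$ into the definition of $f_{p,e,d}$ and separating the three sums, the sum over $S_e^p$ collapses blockwise to $\sum_{\theta\in S_e^p}(-1)^\theta\theta^{(V)}=\sym_{-e}^{\o p}$ on the domain side, and the sum over $S_2\wr S_d$ factors as $\bigl(\sum_{\sigma\in S_d}\sigma^{(2)}\bigr)\circ\bigl(\sum_{\tau\in S_2^d}(-1)^\tau\tau^{(V)}\bigr)=\sym^{(2)}_d\circ\sym_{-2}^{\o d}=s_d$ on the codomain side; note that $\chi_{p,e,d}$ carries no sign on the $S_d$-factor, which is exactly what makes the $d$ pairs of lower points symmetrized (not antisymmetrized), matching $S^d\L^2 V$.

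Granting this identity, I would use the pseudo-abelian structure of $\RepOt$. As $\operatorname{char}\kk=0$, the endomorphisms $\tfrac{1}{e!^p}\sym_{-e}^{\o p}$ of $V^{\o pe}$ and $\tfrac{1}{d!\,2^d}s_d$ of $V^{\o 2d}$ are idempotents with images $T^p\L^eV$ and $S^d\L^2V$; write $\iota_\L,\pi_\L$ and $\iota_S,\pi_S$ for the associated split inclusions and projections. Extending the identity of the first paragraph $\kk$-linearly gives $f_{p,e,d}(h)=s_d\circ h\circ\sym_{-e}^{\o p}$ for all $h\in\Hom(T^{pe}V,T^{2d}V)$, so $\im(f_{p,e,d})$ is the subspace of $\Hom(T^{pe}V,T^{2d}V)$ obtained by composing with these two idempotents on the two sides, and the pseudo-abelian structure identifies this subspace canonically with $\Hom(T^p\L^eV,S^d\L^2V)$ via $g\mapsto\iota_S\circ g\circ\pi_\L$. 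Tracking constants, $f_{p,e,d}(x)$ corresponds under this identification to the fixed nonzero multiple $e!^p\cdot\bigl(s_d\circ x|_{T^p\L^eV}\bigr)$ of the morphism appearing in the statement, where the scalar is independent of $x$.

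To conclude, I would apply \Cref{lem::general-group-action} with $G:=G_{p,e,d}$ acting on $S:=A^{pe}_{2d}$ and with the group homomorphism $\chi:=\chi_{p,e,d}$ (which is indeed a homomorphism to $\kk^\times$ by construction): after the linear identification $\kk A^{pe}_{2d}\cong\Hom(T^{pe}V,T^{2d}V)$, the linearly extended $f_{p,e,d}$ is precisely the map $f$ of that lemma, so it restricts to a linear isomorphism from $\kk X$ onto $\im(f_{p,e,d})$, where $X$ may be taken to be any set of representatives of those $G$-orbits $x$ with $\chi|_{G_x}\equiv1$. By \Cref{prop::orbit-reps}, the set $\{x(\g,\l):\g\in\Gamma_{p,e},\,\l\in P^+,\,|\g|+|\l|=d\}$ is exactly such a set $X$. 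Hence $\{f_{p,e,d}(x(\g,\l))\}$ is a basis of $\im(f_{p,e,d})$, and transporting it through the identification of the previous paragraph (and dividing out the common nonzero scalar $e!^p$) shows that $\{s_d\circ x(\g,\l)|_{T^p\L^eV}:\g\in\Gamma_{p,e},\,\l\in P^+,\,|\g|+|\l|=d\}$ is a basis of $\Hom(T^p\L^eV,S^d\L^2V)$.

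I expect the only genuinely non-formal step to be the central identity of the first paragraph: one must match the combinatorial ``permute the points'' action on arc diagrams with pre- and post-composition in $\RepOt$ and verify that summing against $\chi_{p,e,d}$ makes the three partial sums collapse exactly to $\sym_{-e}^{\o p}$ on the domain and to $s_d=\sym^{(2)}_d\circ\sym_{-2}^{\o d}$ on the codomain --- in particular that the placement of signs in $\chi_{p,e,d}$ (none on $S_d$) and the commutation $\sym^{(2)}_d\circ\sym_{-2}^{\o d}=\sym_{-2}^{\o d}\circ\sym^{(2)}_d$ are what make the codomain sum well-defined and equal to $s_d$. Everything downstream is routine bookkeeping with split idempotents in characteristic $0$ together with direct appeals to the two cited results.
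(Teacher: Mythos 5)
Your proposal is correct and follows essentially the same route as the paper: the key identity $f_{p,e,d}(x)=s_d\circ x\circ\alt_e^{\o p}$ (your $\sym_{-e}^{\o p}$) is exactly \Cref{eq::formula-f}, after which the paper likewise concludes by combining \Cref{prop::orbit-reps} with \Cref{lem::general-group-action}. Your version merely spells out the idempotent/scalar bookkeeping that the paper leaves implicit.
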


\begin{proof} Note that 
\begin{equation} \label{eq::formula-f}
s_d\circ x\circ\alt_e^{\o p}
= \sum_{g\in G_{p,e,d}} \chi_{p,e,d}(g)(g\cdot x)
= f_{p,e,d}(x)
\tforall x\in A^{pe}_{2d} .
\end{equation}
Hence, the assertion follows from \Cref{prop::orbit-reps} and \Cref{lem::general-group-action}.
\end{proof}

To understand, why not all of $\ti\G_{p,e}$ and not all of $P$ are needed to parameterize a basis, consider these examples:

\begin{example} Let $\g\in\ti\Gamma_{1,2}$ be a pseudograph with one vertex and a loop with even label $d\ge 0$. Consider $x=x(\g,\emptyset)$ in $A^2_{2d}$. Let $\pi$ be the element in $G_{1,2,d}$ which permutes the two upper points and inverts the order of all $2d$ lower points in any arc diagram of $A^2_{2d}$. Then, as $\pi\cdot x(\g,\emptyset)=x(\g,\emptyset)$ and $\chi(\pi)=-1$, 
$$
s_d\circ x\circ\alt_2 = 0 .
$$
The arc diagrams $x(\g,\emptyset)$ and the action of the permutation $\pi$ can be visualized for $d=0,2,4$ as follows:
$$
\tpsyma{1}{0}{1,2},
\qquad\qquad
\tpsyma{2}{4}{2,11,0,12,13,0,14,3},
\qquad\qquad
\tpsyma{4}{8}{4,11,0,12,13,0,14,15,0,16,17,0,18,5} .
$$
\end{example}

\begin{example} Similarly, let $\l\in P$ be the partition $\l=(d)$ with one part of odd size $d\ge 1$, let $\pi$ be the element of $G_{0,0,d}$ which permutes the first two lower points and inverts the order of the remaining $2d-2$ lower points in any arc diagram of $A^0_{2d}$. Then $\pi\cdot x(\emptyset,\l)=x(\emptyset,\l)$ and $\chi(\pi)=-1$, so $s_d\circ x(\emptyset,\l)=0$.
The arc diagrams $x(\emptyset,\l)$ and the action of the permutation $\pi$ can be visualized for $d=1,3,5$ as follows:
$$
\tpsymb{0}{11,12},
\qquad\qquad
\tpsymb{4}{11,16,0,12,13,0,14,15},
\qquad\qquad
\tpsymb{5}{11,17,0,12,13,0,13.5,14,0,14.5,15,0,15.5,16} .
$$
\end{example}

\bigskip

Note that apart from the action of $G_{p,e,d}$ considered so far, we also have an action of $S_p$ on $A^{pe}_{2d}$ by permuting the $e$-tuples $B_1,\dots,B_p$ of upper points, and we have an action of $S_p$ on $\Gamma_{p,e}$ permuting the vertices, whose orbits correspond to isomorphisms classes of pseudographs, ignoring the vertex numbering.

For the rest of this section, we specialize $p=2$, and we set $\tau:=(1,2)\in S_2$, in order to describe a basis for the morphism spaces $\Hom(S^{\pm2}\L^e V,S^d\L^2 V)$.

\begin{definition} For any $\g\in\G_{2,e}$, we define
$$
\sgn\g := \prod_{\g_i: \g_{i,1}=1, \g_{i,2}=2} (-1)^{|\g_i|} 
\qquad\in\{\pm1\}
$$
and
$$
\g^t:=\tau\cdot\g \in\G_{2,e} ,
$$
the graph with the two vertices interchanged.
\end{definition}

\begin{lemma} \label{lem::sign}
For all $e\ge0$, $\g\in\Gamma_{2,e}$, and $\l\in P^+$,
$$
f_{2,e,|\g|+|\l|}(x(\g,\l))\circ \tau^{(e)} 
= \sgn(\gamma) f_{2,e,|\g|+|\l|}(x(\g^t,\l)) .
$$
\end{lemma}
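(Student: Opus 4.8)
The plan is to first move the braiding $\tau^{(e)}$ past the antisymmetrizers, recognize the result as $f_{2,e,d}(\tau\cdot x(\g,\l))$ for $d:=|\g|+|\l|$, and then transport $\tau\cdot x(\g,\l)$ back to $x(\g^t,\l)$ inside its $G_{2,e,d}$-orbit, carefully tracking the sign this costs.

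For the first step, recall from \eqref{eq::formula-f} that $f_{2,e,d}(y)=s_d\circ y\circ\alt_e^{\o 2}$ for every $y\in A^{2e}_{2d}$. Interpreted as an automorphism of $T^e V\o T^e V=T^{2e}V$, the permutation $\tau^{(e)}$ is exactly the symmetric braiding $c_{T^e V,T^e V}$, which by naturality of the braiding commutes with $\alt_e\o\alt_e=\alt_e^{\o 2}$. Hence
$f_{2,e,d}(x(\g,\l))\circ\tau^{(e)}
= s_d\circ x(\g,\l)\circ\alt_e^{\o 2}\circ\tau^{(e)}
= s_d\circ\bigl(x(\g,\l)\circ\tau^{(e)}\bigr)\circ\alt_e^{\o 2}$,
and $x(\g,\l)\circ\tau^{(e)}$ is, as an arc diagram, the result $\tau\cdot x(\g,\l)$ of the $S_2$-action interchanging the two blocks $B_1,B_2$ of upper points. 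Applying \eqref{eq::formula-f} once more yields $f_{2,e,d}(x(\g,\l))\circ\tau^{(e)}=f_{2,e,d}(\tau\cdot x(\g,\l))$.

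For the second step, I would locate the orbit of $\tau\cdot x(\g,\l)$. By \Cref{lem::ASD} it suffices to compare arc sequence data; interchanging $B_1$ with $B_2$ swaps arc sequences of type $(1,1,\ell)$ with those of type $(2,2,\ell)$ and fixes those of type $(1,2,\ell)$ and $(0,0,\ell)$, which is precisely the effect on the arc sequence datum of replacing $\g$ by $\g^t=\tau\cdot\g$. Thus $\ASD(\tau\cdot x(\g,\l))=\ASD(x(\g^t,\l))$, so $\tau\cdot x(\g,\l)=h\cdot x(\g^t,\l)$ for some $h\in G_{2,e,d}$. Reindexing the defining sum of $f_{2,e,d}$ (and using $\chi(h^{-1})=\chi(h)$ since $\chi$ takes values in $\{\pm1\}$) gives $f_{2,e,d}(h\cdot y)=\chi(h)\,f_{2,e,d}(y)$ for all $y$, so the lemma reduces to the claim $\chi(h)=\sgn(\g)$.

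For the third step, note that $\g,\g^t\in\G_{2,e}$ and $\l\in P^+$, so by \Cref{prop::orbit-reps} the character $\chi$ is trivial on $(G_{2,e,d})_{x(\g^t,\l)}$; hence $\chi(h)$ depends only on the transporting coset, not on the arbitrary edge-listing and base-point choices in $x(\g,\l)$ and $x(\g^t,\l)$, and I may compute it for convenient choices. Taking the edge listing of $\g^t$ to mirror that of $\g$, and the base points of $\g^t$ to be the $\tau$-images of those of $\g$ (with the two endpoint roles of each $1$--$2$ edge interchanged, to respect the re-sorting of its endpoints), one checks that $h$ acts trivially on all upper points, trivially on every loop arc sequence and every arc sequence without upper points, and on the $2|\g_i|$ lower points of each $1$--$2$-edge arc sequence $\g_i$ it acts as the total order-reversal of those points. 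Inside the relevant wreath-product factor of $G_{2,e,d}$ from \Cref{lem::stabilizer}, such a reversal is the product of a reversal of the $|\g_i|$ adjacent pairs, which lies in a symmetric-group factor invisible to $\chi$, and a flip within each of those $|\g_i|$ pairs, contributing $(-1)^{|\g_i|}$. Hence $\chi(h)=\prod_{\g_i\colon\g_{i,1}=1,\,\g_{i,2}=2}(-1)^{|\g_i|}=\sgn(\g)$, which together with the first two steps proves the lemma.

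The main obstacle is this last step: pinning down the transporting element $h$ arc-sequence by arc-sequence and reading off its $\chi$-value, while neutralizing the effect of the arbitrary definitional choices (handled here via \Cref{prop::orbit-reps}). The remaining ingredients are formal — naturality of the symmetric braiding and a reindexing of a finite group sum — or a direct inspection of arc sequence data.
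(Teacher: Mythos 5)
Your proposal is correct and follows essentially the same route as the paper's proof: reduce via \eqref{eq::formula-f} to showing $x(\g,\l)\circ\tau^{(e)}=h\cdot x(\g^t,\l)$ for some $h\in G_{2,e,d}$ with $\chi(h)=\sgn\g$, and then take $h$ to be the order-reversal of the lower points along each $1$--$2$ arc sequence, whose $\chi$-value is $(-1)^{|\g_i|}$ per such edge. Your extra observation that $\chi(h)$ is independent of the transporting element (because $\chi$ is trivial on the stabilizer of $x(\g^t,\l)$ by \Cref{prop::orbit-reps}) is a clean way to justify the "convenient choices" that the paper makes implicitly via \Cref{lem::choices}.
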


\begin{proof} The assertion follows immediately with \Cref{eq::formula-f} from the claim that
$$
x(\g,\l)\circ\tau^{(e)} = g\cdot x(\tau\cdot\g,\l)
$$
for some $g\in G_{2,e,d}$ with $\chi_{2,e,d}(g)=\sgn\g$.

To prove the claim, note that as the part of $x(\g,\l)$ which depends on $\l$ does not have arcs ending in upper points, we may assume $\l=\emptyset$. We are free to choose a listing of the edges of $\g$, so we may assume the listing first lists all loops involving the vertex $1$, then all edges between $1,2$, and finally, all loops involving the vertex $2$. We set $x=x(\g,\emptyset)$, using this choice. Let $L_1$ and $L_2$ be the numbers of loops at the vertices $1$ and $2$, respectively. Let $N$ be the total number of edges.

We can represent
$$
x = \prod_{\g_i}
|^{(\g_{i,1},p_{i,1})}\cap^{|\g_i|}|^{(\g_{i,2},p_{i,2})}
$$
where the indices in parentheses indicate the position $p_{i,j}$ of the upper point the respective arc ends in, within the block $B_{\g_{i,j}}$ of upper points. By our choices, $(\g_{i,1},\g_{i,2})=(1,1)$ for $1\le i\le L_1$, $(\g_{i,1},\g_{i,2})=(1,2)$ for $L_1< i\le N-L_2$, and $(\g_{i,1},\g_{i,2})=(2,2)$ for $N-L_2< i\le N$. We can now represent
$$
x\circ\tau^{(e)}
= \prod_{\g_i}
|^{(\tau(\g_{i,1}),p_{i,1})}\cap^{|\g_i|}|^{(\tau(\g_{i,2}),p_{i,2})} .
$$
Let $g\in G_{2,e,d}$ be the permutation which inverts the order of all lower points along every arc sequence between $B_1$ and $B_2$. Then $\chi_{2,e,d}(g)=\sgn\g$ and $g\cdot(x\circ\tau^{(e)})$ is one of the possible arc diagrams for $x(\g^t,\emptyset)$, namely for a choice of the listing of all edges analogous to the one chosen for $x=x(\g,\emptyset)$.
\end{proof}

Let $\Gamma_{2,e}/S_2$ be the set of isomorphism classes $[\gamma]$ of pseudographs in $\G_{2,e}$ as $\NN$-edge-labelled pseudographs without a vertex numbering. We note that the function $\g\mapsto\sgn\g$ induces a well-defined function $\Gamma_{2,e}/S_2\ni[\g]\to\sgn[\g]\in\{\pm1\}$.

\begin{definition} \label{def::x-rho}
For $\rho=\pm1$, let $\G^\rho_{2,e}$ be a set of representatives for those isomorphism classes $[\g]\in\G_{2,e}/S_2$ which satisfy (a) $|[\g]|=2$, i.e., $\g\neq\g^t$, or (b) $\sgn[\gamma]=\rho$. For all $\g\in\G^\rho_{2,e}$, $\l\in P^+$, we set
$$
x^\rho(\g,\l) := x(\g,\l) + \rho \sgn(\g) x(\g^t,\l)
\qquad\in\Hom(T^{2e}V,T^{2d}V) .
$$
\end{definition}

\begin{proposition} \label{prop::Hom-sym} 
For all $e,d\ge0$, the set 
$$
\{ s_d\circ x^\pm(\g,\l)|_{S^{\pm2}\L^e V} :\g\in\Gamma^\pm_{2,e},\l\in P^+,|\g|+|\l|=d\}
$$
is a basis of $\Hom(S^{\pm2}\L^e V,S^d\L^2V)$.
\end{proposition}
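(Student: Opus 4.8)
The plan is to establish the proposition by diagonalizing a natural order-two operator on the basis from the $p=2$ case of \Cref{cor::Hom-tensor}. In $\RepOt$ the object $T^2\L^e V=\L^e V\o\L^e V$ splits as $S^{+2}\L^e V\oplus S^{-2}\L^e V$ via the orthogonal idempotents $\tfrac12(\id\pm\tau^{(e)})$, where $\tau^{(e)}$ denotes the automorphism of $T^2\L^e V$ swapping the two tensor factors (equivalently, the restriction of the block-swap automorphism $\tau^{(e)}$ of $T^{2e}V$, which commutes with $\alt_e^{\o2}$). Hence $\Hom(T^2\L^e V,S^d\L^2 V)=\Hom(S^{+2}\L^e V,S^d\L^2 V)\oplus\Hom(S^{-2}\L^e V,S^d\L^2 V)$, and a morphism $\phi\colon T^2\L^e V\to S^d\L^2 V$ factors through $S^{2\rho}\L^e V$ exactly when $\phi\circ\tau^{(e)}=\rho\phi$; so, via restriction along $S^{2\rho}\L^e V\hookrightarrow T^2\L^e V$, the space $\Hom(S^{2\rho}\L^e V,S^d\L^2 V)$ is identified with the $\rho$-eigenspace of the involution $R\colon\phi\mapsto\phi\circ\tau^{(e)}$ on $\Hom(T^2\L^e V,S^d\L^2 V)$. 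It therefore suffices to understand $R$ on the basis $b(\g,\l):=s_d\circ x(\g,\l)|_{T^2\L^e V}$, $\g\in\Gamma_{2,e}$, $\l\in P^+$, $|\g|+|\l|=d$, furnished by \Cref{cor::Hom-tensor}.

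Next I would compute $R(b(\g,\l))$. Using the identity $f_{2,e,d}(x)=s_d\circ x\circ\alt_e^{\o2}$ of \Cref{eq::formula-f}, the fact that $\alt_e^{\o2}$ commutes with $\tau^{(e)}$, and that the inclusion $T^2\L^e V\hookrightarrow T^{2e}V$ intertwines the factor-swap with the block-swap $\tau^{(e)}$ (naturality of the braiding), the identity of \Cref{lem::sign} restricts to $R(b(\g,\l))=\sgn(\g)\,b(\g^t,\l)$. Since interchanging the two vertices leaves the multiset of labels of the edges joining them unchanged, $\sgn(\g^t)=\sgn(\g)$ — this is the well-definedness of $\sgn[\g]$ noted before \Cref{def::x-rho} — and in particular $R^2=\id$, as it must. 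Consequently, for a class $[\g]$ with $\g\neq\g^t$, the plane spanned by $b(\g,\l)$ and $b(\g^t,\l)$ carries, for each $\rho\in\{\pm1\}$, the $\rho$-eigenvector $b(\g,\l)+\rho\sgn(\g)\,b(\g^t,\l)$; whereas if $\g=\g^t$, then $b(\g,\l)$ is itself an $R$-eigenvector with eigenvalue $\sgn(\g)$, hence lies in the $\rho$-eigenspace precisely when $\sgn(\g)=\rho$.

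Now I would match this against the proposed set. By \Cref{def::x-rho} one has $s_d\circ x^\rho(\g,\l)|_{T^2\L^e V}=b(\g,\l)+\rho\sgn(\g)\,b(\g^t,\l)$; for $\g\neq\g^t$ this is exactly the $\rho$-eigenvector found above, and for $\g=\g^t$ it equals $(1+\rho\sgn(\g))\,b(\g,\l)$, which is a nonzero scalar multiple of the eigenvector $b(\g,\l)$ when $\sgn(\g)=\rho$ and is $0$ otherwise. By construction the index set $\Gamma^\rho_{2,e}$ consists of exactly one representative of each class with $\g\neq\g^t$, together with one representative of each self-dual class ($\g=\g^t$) having $\sgn[\g]=\rho$; moreover replacing a chosen representative $\g$ of a class $\{\g,\g^t\}$ by $\g^t$ merely rescales the corresponding eigenvector by $\pm1$ (again using $\sgn(\g^t)=\sgn(\g)$). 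Hence the elements $s_d\circ x^\rho(\g,\l)|_{T^2\L^e V}$ with $\g\in\Gamma^\rho_{2,e}$, $\l\in P^+$, $|\g|+|\l|=d$, form — up to nonzero scalars — a basis of the $\rho$-eigenspace of $R$. Since the inclusion $S^{2\rho}\L^e V\hookrightarrow T^{2e}V$ factors through $T^2\L^e V$, restricting these along $S^{2\rho}\L^e V\hookrightarrow T^2\L^e V$ yields precisely the elements $s_d\circ x^\rho(\g,\l)|_{S^{2\rho}\L^e V}$ of the statement, and the identification of the $\rho$-eigenspace with $\Hom(S^{2\rho}\L^e V,S^d\L^2 V)$ completes the argument.

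The conceptual core — diagonalizing an involution whose orbits on a known basis are the pairs $\{\g,\g^t\}$ — is routine. I expect the only real care to be needed in the bookkeeping of the second paragraph: keeping the three ambient objects $T^{2e}V\supset T^2\L^e V\supset S^{2\rho}\L^e V$ straight, distinguishing the antisymmetrizer $\alt_e^{\o2}$ from the genuine splitting idempotent $\tfrac{1}{(e!)^2}\alt_e^{\o2}$ (and the scalar factors this introduces), and verifying that $\tau^{(e)}$ really corresponds to the factor-swap on $T^2\L^e V$ under the inclusion, so that \Cref{lem::sign} and \Cref{eq::formula-f} combine into the clean formula $R(b(\g,\l))=\sgn(\g)\,b(\g^t,\l)$.
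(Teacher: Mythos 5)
Your proposal is correct and takes essentially the same route as the paper: the paper's two-line proof likewise combines \Cref{cor::Hom-tensor} with \Cref{lem::sign}, precomposing the basis of $\Hom(T^2\L^e V,S^d\L^2V)$ with $\id\pm\tau^{(e)}$ and reading off the result. Your write-up merely makes explicit the diagonalization of the involution $\phi\mapsto\phi\circ\tau^{(e)}$ and the bookkeeping for self-transpose classes, which the paper leaves implicit.
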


\begin{proof}
By \Cref{cor::Hom-tensor}, the elements $s_d\circ x(\g,\l)\circ (\id\pm\tau^{(e)})$ for $\g\in\G_{2,e}$ and $\l\in P^+$ span the $\Hom$-space in question. By \Cref{lem::sign}, the asserted set is a basis.
\end{proof}

\begin{remark} \Cref{cor::Hom-tensor} and \Cref{prop::Hom-sym} can be summarized by saying that the sets 
$$\G_{p,e}\x P^+ \tand \G^{\pm}_{2,e}\x P^+ $$
parametrize bases of 
$$ \Hom(T^p \L^e V,S(\L^2 V))
\tand
\Hom(S^{\pm2}\L^e V,S(\L^2 V))
$$
in the ind-completion of $\RepOt$, respectively, where $S(\L^2V)=\bigoplus_{d\ge0} S^d\L^e V$ is an ind-object.
\end{remark}

\section{Interpolating the Jacobi identity}

We will use the idea that $\RepOt$ in a certain precise sense interpolates the representation categories of all orthosymplectic groups (see \Cref{sec::RepOt}) to define interpolating versions of PBW deformations.

To this end, we first explain an interpolating version of the Lie algebras of these groups. As before, the symbol $\Hom$ without further specification refers to the hom-spaces in the category $\RepOt$ or in its ind-completion.

\begin{definition} We set
\begin{align*}
\gf &:= \L^2 V \subset T^2 V
\qquad\in\RepOt
\\
B &:= (\alt_2\circ |\cup|)|_{\gf\o\gf} \qquad\in\Hom(\gf\o\g,\gf)
\\
\phi &:= (|\cup)|_{\gf\o V} 
\qquad\in\Hom(\gf\o V,V)
\\
\Ug &:= S\gf = \bigoplus_{d\ge 0} S^d \gf = \one\oplus V\oplus S^2 V\oplus\dots
\qquad\in\Ind(\RepOt)
\\
\Delta_{i,j} &:= (T^i V\o T^j V)|_{S^{i+j}\gf} \qquad\in\Hom(S^{i+j}\g,S^i\gf\o S^j\gf) \tforall i,j\ge0 
\\
\Delta &:= \bigoplus_{d\ge0} \sum_{i+j=d} \Delta_{i,j}
\qquad\in\Hom(\Ug,\Ug\o\Ug) .
\end{align*}
\end{definition}

\begin{lemma} \label{lem::interpolating-lie-algebra}
In $\RepOt$, $(\gf,B)$ is a Lie algebra, $V$ is a $\gf$-module with the action map $\phi$.
In $\Ind(\RepOt)$, $\Ug$ has the structure of a Hopf algebra, the universal enveloping algebra of $\gf$, with cocommutative comultiplication $\Delta$. 
The specialization functor $F_{m,2n}$ from \Cref{sec::RepOt} sends $\gf$ to the Lie algebra of $\OSp(m|2n)$ and $\Ug$ to its universal enveloping algebra.
\end{lemma}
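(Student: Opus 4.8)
The plan is to verify each assertion by transporting known identities in $\RepOt$ through the graphical calculus, using the universal property of $\RepOt$ to then deduce the specialization claims. First I would establish that $(\gf,B)$ is a Lie algebra object in $\RepOt$. Antisymmetry is essentially by construction: $B$ is defined with the antisymmetrizer $\alt_2$ precomposed, so $B\circ c_{\gf,\gf} = -B$ follows once one checks that $\alt_2$ restricted to $\gf\o\gf\subset T^2V\o T^2V$ anticommutes appropriately with the braiding on the two $\L^2V$ factors; this is a formal consequence of $\gf=\im(\alt_2)$. For the Jacobi identity one writes out $B\circ(B\o\gf)$ as an arc diagram: the composite $(\alt_2\circ|\cup|)\circ((\alt_2\circ|\cup|)\o\id)$, restricted to $\gf^{\o 3}$, produces a linear combination of arc diagrams on six upper and two lower points; cyclically summing over the three factors and using the relation $f_\cup f_\cap = t$ together with the defining relations \Cref{eq::universal-property} makes the $t$-dependent loop contributions cancel in pairs, leaving zero. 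The cleanest route is to observe that for each specialization $F_{m,2n}$ the image $(F_{m,2n}(\gf),F_{m,2n}(B))$ is the orthosymplectic Lie algebra with its usual bracket (a direct matrix computation identifying $\L^2(\kk^{m|2n})$ with $\mathfrak{osp}(m|2n)$ via the bilinear form coming from $\cup,\cap$), and then to invoke the fact that the specialization functors $F_{m,2n}$ for varying $(m,2n)$ are jointly faithful in the relevant degree — more precisely, that a morphism in $\cC_t(V^{\o k},V^{\o\ell})$ which vanishes under $F_{m,2n}$ for infinitely many values of $t=m-2n$ must already be zero, since its expansion in the arc-diagram basis has coefficients that are polynomial in $t$. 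Thus the Jacobi identity, being a polynomial identity in $t$ that holds at infinitely many integer points, holds identically in $\RepOt$.

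Next, that $V$ is a $\gf$-module with action $\phi = (|\cup)|_{\gf\o V}$: the module axiom $\phi\circ(B\o V) = \phi\circ(\gf\o\phi) - \phi\circ(\gf\o\phi)\circ(c_{\gf,\gf}\o V)$ (the Leibniz/compatibility relation for a Lie module) is again a closed-diagram identity on $\gf\o\gf\o V$ that one checks by the same specialization argument, or directly by sliding caps and cups using \Cref{eq::universal-property}. The essential input is the zig-zag relation $(V\o f_\cup)(f_\cap\o V) = V$, which lets one straighten the composite $\phi\circ(\gf\o\phi)$.

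Then I would treat $\Ug = S\gf$ as a Hopf algebra in $\Ind(\RepOt)$. The multiplication is the one induced on the symmetric algebra $S\gf$ from the shuffle/symmetrization structure discussed in the bialgebra subsection; $\Delta$ as defined is the deconcatenation composed with the splitting morphisms $\Delta_{i,j}$, and one must check that $\Delta$ is an algebra map and is coassociative and cocommutative. Coassociativity and cocommutativity are formal from the definition of the $\Delta_{i,j}$ as restrictions of identity maps $T^{i+j}V\to T^iV\o T^jV$ intertwined with symmetrizers, since the symmetric group coproduct $\kk S_n \to \kk S_i\o\kk S_j$ is coassociative and, after the full symmetrization, cocommutative. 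That $\Delta$ is multiplicative is the statement that $S\gf$ with this structure is genuinely the symmetric-algebra bialgebra; the antipode exists because $S\gf$ is connected graded (the degree-$0$ part is $\one$), so $\id - \eta\varepsilon$ is locally nilpotent under convolution and the geometric series inverting $\id$ converges in each degree. To identify $\Ug$ as the enveloping algebra of $(\gf,B)$, i.e. that it is the PBW deformation realizing $B$, one would need the bracket to be recovered as the failure of commutativity — but as stated here the lemma only claims $\Ug$ is \emph{a} Hopf algebra with cocommutative comultiplication called the universal enveloping algebra, so it suffices to exhibit the standard cocommutative bialgebra structure on $S\gf$; the naming is justified by the final specialization claim.

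The specialization claims follow by applying $F_{m,2n}$: since $F_{m,2n}$ is a symmetric monoidal $\kk$-linear functor, it sends Lie algebra objects to Lie algebras, $\gf$-modules to modules, and Hopf algebras to Hopf algebras, so it remains only to identify $F_{m,2n}(\gf)$ with $\mathfrak{osp}(m|2n)$ and $F_{m,2n}(B)$ with its bracket; this is the computation that under the pairing $F_{m,2n}(\cup)\colon V_\kk\o V_\kk\to\kk$ (the orthosymplectic form on $\kk^{m|2n}$), the subspace $\L^2 V_\kk\subset V_\kk\o V_\kk$ is exactly the space of form-preserving infinitesimal transformations, with the commutator bracket, and that $F_{m,2n}(\phi)$ is the natural action. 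Functoriality then gives $F_{m,2n}(\Ug)=F_{m,2n}(S\gf)=S(\mathfrak{osp}(m|2n))$ as a coalgebra, but with the multiplication inherited from the smash-product/enveloping-algebra structure it is $\U(\mathfrak{osp}(m|2n))$. The main obstacle is making the joint-faithfulness (``polynomiality in $t$'') argument precise enough to reduce the diagrammatic identities to the finite-dimensional orthosymplectic computations — one must argue that the structure morphisms $B$, $\phi$, $\mu$, $\Delta$ of $\RepOt$ have arc-diagram expansions whose coefficients are polynomial in $t$, so that an identity holding for all integer $t\ge 0$ (or all $t=m-2n$) holds in $\RepOt[t]$ generically and hence for the formal parameter. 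This is standard for Deligne categories but deserves a careful sentence or two; the remaining orthosymplectic-side computations are routine linear algebra with the bilinear form.
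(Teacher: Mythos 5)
Your proposal is correct in substance, but it takes a genuinely different route from the paper, whose entire proof is a one-line deferral: the assertions ``can be checked in a straight-forward way using the categorical versions of Lie algebras and Hopf algebras,'' with pointers to \cite{E16} and \cite{HU}. The intended direct argument is cleaner than either of your two routes for the Lie-algebra part: $T^2V\cong V\o V^*$ with multiplication $|\cup|$ is an associative algebra object (the internal endomorphism algebra of the self-dual object $V$), the transpose $c_{V,V}$ is an anti-automorphism by \Cref{eq::universal-property}, hence $\gf=\im(\alt_2)$ is closed under the commutator and $B$ is (up to normalization) that commutator restricted to $\gf\o\gf$; antisymmetry and Jacobi are then formal consequences of associativity, with no diagram expansion and no loop/$t$ bookkeeping. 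Your module axiom, the symmetric-algebra bialgebra structure, the connected-graded antipode, and the monoidal-functoriality reduction of the specialization claims to the linear-algebra identification $\L^2\kk^{m|2n}\cong\mathfrak{osp}(m|2n)$ all match what the references supply. What your interpolation route buys is concreteness (everything reduces to classical orthosymplectic computations); what it costs is an extra nontrivial input, namely faithfulness of $F_{m,0}$ on the relevant hom-spaces for large $m$, which rests on the second fundamental theorem of invariant theory for $O_m$, whereas the categorical check works verbatim in any symmetric tensor category with a symmetrically self-dual object and needs no specialization at all.

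One point in your write-up needs tightening rather than repair. As phrased, ``a morphism in $\cC_t(V^{\o k},V^{\o\ell})$ which vanishes under $F_{m,2n}$ for infinitely many values of $t=m-2n$'' is ill-posed: for a fixed $t$ only the functors with $m-2n=t$ apply. The correct statement, which you do reach at the end, is that the Jacobi (resp.\ module, bialgebra) expression is a \emph{single} finite $\ZZ[t]$-linear combination of arc diagrams, uniform in $t$; one then checks that its specialization under $F_{m,0}$ vanishes for all $m$, invokes injectivity of $F_{m,0}$ on $\cC_m(V^{\o k},V^{\o\ell})$ for $m\ge k+\ell$ to conclude that the coefficient polynomials vanish at infinitely many integers, hence identically. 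If you make that sentence precise, your proof is complete; but be aware it is doing noticeably more work than the paper intends.
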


\begin{proof} The assertions can be checked in a straight-forward way using the categorical versions of Lie algebras and Hopf algebras (see also \cite{E16}*{Prop.~2.5, Cor.~2.14} and \cite{HU}*{Sec.~2.3}).
\end{proof}

We fix an arbitrary $e\ge1$ and set
$$
W := \L^eV .
$$
As $\Ug$ is a cocommutative Hopf algebra in a symmetric monoidal category, it has a natural action on $W=\L^e V$. We postpone the actual computation of this action to \Cref{lem::action-phi-e}, since it will suffice for now to work with some of its general properties.

\begin{definition} Let $\hat\phi\in\Hom(\Ug\o W)$ be the action of $\Ug$ on $W$ and for all $d\ge0$, let $\phi_d\in\Hom(S^d\gf\o W,W)$ be its restriction to $S^d\gf\subset\Ug$.
\end{definition}

We now also fix a $\rho\in\{\pm1\}$, which will allow us to discuss symmetric and exterior products of $W$ simultaneously.

\begin{definition} For any $\k\in\Hom(S^{2\rho}W,\Ug)$, we set
$$
J(\k) := J^\rho(\k) :=
\big(
({\Ug}\o\hat\phi)\circ(\Delta\circ\kappa\o W)
- \kappa\o W
\big)\big|_{S^{3\rho}W}
\quad\in\Hom(S^{3\rho}W, \Ug\o W) .
$$
\end{definition}

Recall from \Cref{sec::PBW} the notion of PBW deformations and PBW deformation maps.

\begin{lemma} For any tensor functor $F\:\Ind(\RepOt)\to\SVec$ and for any $\k\in\Hom(S^{2\rho}W,\Ug)$, $F(\k)$ is a PBW deformation map of type $(F(\Ug),F(W),\rho)$ if and only if $F(J(\k))=0$.
\end{lemma}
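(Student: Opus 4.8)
The plan is to invoke the general theory of PBW deformations (Braverman--Gaitsgory, or in the form of Walton--Witherspoon \cite{WW}) after translating the setup into $\SVec$ via the tensor functor $F$. First I would record that $F(\Ug)$ is an ordinary (super)bialgebra, in fact the universal enveloping algebra of the Lie superalgebra $F(\gf)$, and that $F(W)$ is an $F(\Ug)$-module, so that the smash product $A:=F(TW)\rtimes F(\Ug)$ and its quotient $A_0 := A/I_0$ are defined exactly as in \Cref{sec::PBW}. The deformation is obtained by quotienting $A$ by the ideal generated by the image of $\iota_{S^{2\rho}F(W)} - F(\k)$; since $F$ is a tensor functor it commutes with $\iota$, symmetrizers, $\Delta$ and $\hat\phi$, so $F(\k)$ is precisely a candidate deformation map in the sense of \Cref{sec::PBW}, with quadratic (relation) part living in degree $2$ and with a degree-$0$ correction landing in $F(\Ug)$.

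Next I would recall the Braverman--Gaitsgory--type criterion: for a deformation of this shape, where the quadratic part of the relations is undeformed (we only add a lower-order term $F(\k)$ valued in the "group" part $F(\Ug)$), the single obstruction to the PBW property is the Jacobi-type consistency condition on triples of generators of $W$. Concretely, one applies the two ways of reducing $w_1 w_2 w_3 \in F(T^3 W)$ using the relation $w_i w_j = (w_i\wedge w_j \text{ or } w_i w_j) + F(\k)(w_i, w_j)$ (the sign governed by $\rho$), straightens, and demands that the results agree in $A$; the difference, after projecting to the appropriate symmetric or exterior power $S^{3\rho}F(W)$, is exactly the expression defining $J(\k)$. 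More precisely, moving one relation past a generator produces the term $(F(\Ug)\o F(\hat\phi))(F(\Delta)F(\k)\o F(W))$ — the comultiplication appears because one must act on $W$ with the element of $F(\Ug)$ coming out of $F(\k)$, and $\Delta$ is what splits that action correctly when $F(\k)(w_i,w_j)$ is moved through — while the undeformed straightening contributes $-F(\k)\o F(W)$; together these give $F(J(\k))$ after restriction along $S^{3\rho}F(W)$. Hence $F(\k)$ satisfies the BGG consistency condition if and only if $F(J(\k)) = F(\Ug\o\hat\phi)(\Delta\k\o W)|_{S^{3\rho}W} - (\k\o W)|_{S^{3\rho}W}$ maps to zero under $F$, i.e. $F(J(\k)) = 0$.

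Two bookkeeping points need care, and the main obstacle is the second. First, one must check that there are no \emph{other} components of the BGG obstruction: in principle a deformation with a degree-lowering term can also force a constraint in lower filtration degree, but here the relations have no constant term and the target of $F(\k)$ is a sub-bialgebra on which the relevant lower obstruction vanishes automatically (this is the standard reason the classification of such "Drinfeld--Hecke"/infinitesimal Hecke deformations reduces to a single Jacobi identity; cf. \cite{EGG}, \cite{WW}). Second — and this is where I expect to spend the most effort — one must verify that the \emph{categorical} straightening computation in $\Ind(\RepOt)$, carried out diagrammatically with the braiding $c$, the coproduct $\Delta$ and the action $\hat\phi$, produces the stated morphism $J(\k)$ on the nose, including the correct placement of symmetrizers $\sym_{\pm3}$ coming from $S^{3\rho}W$ and $\sym_{\pm2}$ coming from $S^{2\rho}W$, and the correct signs dictated by $\rho$. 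This is a finite but delicate diagram chase: one writes the two reduction paths for the symmetrized product of three copies of $W$, uses coassociativity and cocommutativity of $\Delta$ (\Cref{lem::interpolating-lie-algebra}) and the module axioms for $\hat\phi$ to identify the "moved" term, and checks that the remaining terms cancel in pairs. Once the identity $F(J(\k)) = (\text{BGG obstruction for } F(\k))$ is established in $\SVec$ by applying $F$ to this diagrammatic identity, the equivalence "$F(\k)$ is a PBW deformation map $\iff F(J(\k))=0$" is immediate from the Braverman--Gaitsgory theorem.
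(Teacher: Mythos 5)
Your proposal follows essentially the same route as the paper: reduce via $F$ to the known PBW criterion in supervector spaces (Walton--Witherspoon / the Koszul deformation principle), then identify the resulting Jacobi-type obstruction for triples of generators with $F(J(\k))$, the term $(\Ug\o\hat\phi)(\Delta\k\o W)$ arising from moving the output of $\k$ past a generator and $-\k\o W$ from the undeformed straightening. One small correction: besides the Jacobi identity, the general theory also requires $F(\k)$ to be $F(\Ug)$-equivariant, and the right reason this holds is not that ``the target is a sub-bialgebra'' but simply that $\k$ is a morphism in (the ind-completion of) $\RepOt$, so its specialization is automatically a morphism of $\OSp(m|2n)$-representations.
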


\begin{proof} By definition, $F(\k)$ is a PBW deformation map of type $(F(\Ug),F(W),\rho)$ if and only if
$$
A_{F(\k)}:=TF(W)\rtimes F(\Ug) / (vw + \rho wv - F(\k)(vw + \rho wv) : v,w\in TF(W) )
$$
is a PBW deformation of $A_0:=S^{\rho} F(W)\rtimes F(\Ug)$, where $S^+$ and $S^-$ shall denote the symmetric and exterior algebra, respectively.

By the general theory of PBW deformation in the category of supervector spaces in characteristic $0$, this equivalent to two conditions: (a) $F(\k)$ needs to be $F(\Ug)$-equivariant, which is automatic in our case, since $F(\k)$ is the specialization of a morphism from (the ind-completion of) $\RepOt$. (b) the (generalized) Jacobi identity
\begin{align*}
0 &= [[u,v],w] + [[v,w],u] + [[w,u],v]
\end{align*}
need to hold in $A_{F(\k)}$, where $F(\k)$ is used to compute the inner commutators. The equivalence of the conditions (a), (b) with $A_\k$ being a PBW deformation of $A_0$ follow from a categorical version of the ``Koszul deformation principle'' \cite{E18}*{Thm.~3.4}; for the category of vector spaces, this is a special case of \cite{WW}*{Thm.~3.1}.

The assertion now follows upon observing that the nested commutators can be computed as
$$
[[u,v],w] = [F(\k)(uv+\rho vu),w]
= F( (\Ug\o \hat\phi)\o(\Delta\circ\k\o W) - \k\o W)(u\o v\o w) .
$$
\end{proof}

This motivates the following definition:
\begin{definition} $\k\in\Hom(W,\Ug)$ is called \emph{interpolating PBW deformation map of type $(\Ug,W,\rho)$} if $J(\k)=0$.
\end{definition}

We will eventually determine all interpolating PBW deformation maps of type $(\Ug,W,\rho)$. We start with some reformulations of the defining condition $J(\k)=0$.

For all $d\ge0$, let $\pi_d\in\Hom(\Ug, S^d\gf)$ be the projection onto the degree-$d$ part of $\Ug$, and set $\k_d:=\pi_d\circ\k$, so 
$$
\k = \sum_{d\ge0} \k_d .
$$

We can now formulate a first reduced version of the Jacobi identity for $\k$ in the following sense:

\begin{lemma} \label{lem::Jk-1}
$J(\k)=0$ if and only if 
$$
(|^{2d-2}\o\phi_1)\circ(\k_d\o W)|_{S^{3\rho}W}=0 
\tforall d\ge 1.
$$
\end{lemma}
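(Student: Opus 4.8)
The plan rests on two observations: that $J$ is $\kk$-linear in $\k$, and that the action $\hat\phi$ on a symmetric object $S^m\gf\subset\Ug$ can be computed by iterating $\phi_1$. Since $\Delta$ and $\hat\phi$ are fixed, $J$ is linear in $\k$, so writing $\k=\sum_{d\ge0}\k_d$ with $\k_d=\pi_d\circ\k\in\Hom(S^{2\rho}W,S^d\gf)$ gives $J(\k)=\sum_{d\ge0}J^{(d)}$, where $J^{(d)}:=\big((\Ug\o\hat\phi)(\Delta\circ\k_d\o W)-\k_d\o W\big)\big|_{S^{3\rho}W}$ depends only on $\k_d$. As $S^{2\rho}W$ is an object of $\RepOt$ and $\Ug=\bigoplus_{d\ge0}S^d\gf$ is the directed colimit of its finite partial sums, $\k$ factors through some $\bigoplus_{d\le D}S^d\gf$; hence $\k_d=0$ for $d>D$ and only finitely many $J^{(d)}$ are nonzero. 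I would then unfold $J^{(d)}$: on $S^d\gf$ one has $\Delta=\sum_{i+j=d}\Delta_{i,j}$ and $\hat\phi|_{S^j\gf\o W}=\phi_j$, so $J^{(d)}=\sum_{i+j=d}(S^i\gf\o\phi_j)(\Delta_{i,j}\o W)(\k_d\o W)\big|_{S^{3\rho}W}-\k_d\o W\big|_{S^{3\rho}W}$; the $j=0$ summand equals $\k_d\o W$ (because $\phi_0=\id_W$ and $\Delta_{d,0}=\id$) and cancels the last term, leaving a sum over $i+j=d$ with $j\ge1$. In particular $J^{(d)}$ is supported in $\Ug$-degrees $0,\dots,d-1$, with degree-$i$ component $J^{(d)}_i=(S^i\gf\o\phi_{d-i})(\Delta_{i,d-i}\o W)(\k_d\o W)\big|_{S^{3\rho}W}$.

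The key input is a PBW-type identity. By \Cref{lem::interpolating-lie-algebra}, $\Ug=S\gf$ is the universal enveloping algebra of $(\gf,B)$ and $S^m\gf=\im(\sym^{(\gf)}_m)$ is the symmetrization of $\gf^{\o m}$; hence $\phi_m=\hat\phi|_{S^m\gf\o W}$ can be computed by composing the inclusion $S^m\gf\hookrightarrow\gf^{\o m}$ with the iterated action of $\phi_1$ on $W$, one $\gf$-tensorand at a time, beginning with the rightmost one (the orders of iteration all agree on the symmetric subobject and reproduce the enveloping-algebra action). Furthermore, a symmetric $d$-tensor is symmetric in its first $i$ and in its last $d-i$ slots, so $S^d\gf$ sits canonically in $S^i\gf\o S^{d-i}\gf\subseteq\gf^{\o d}$, and $\Delta_{i,d-i}|_{S^d\gf}$ agrees with this canonical inclusion up to a nonzero scalar. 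Substituting both into the formula for $J^{(d)}_i$, I would show that $J^{(d)}_i$ is obtained, up to a nonzero scalar, by post-composing $(|^{2d-2}\o\phi_1)(\k_d\o W)\big|_{S^{3\rho}W}$ — where $\k_d$ is viewed, via $S^d\gf\hookrightarrow\gf^{\o d}\subseteq T^{2d-2}V\o\gf$, as landing in $T^{2d-2}V\o\gf$, and where $\phi_1$ acts on the rightmost $\gf$-tensorand of all $d$ — with the remaining $d-i-1$ iterated $\phi_1$-actions (acting trivially on the leftmost $S^i\gf$-tensorand). Two consequences follow: (1) if $(|^{2d-2}\o\phi_1)(\k_d\o W)|_{S^{3\rho}W}=0$, then $J^{(d)}_i=0$ for all $i$, hence $J^{(d)}=0$; and (2) for $i=d-1$ there are no remaining actions, so $J^{(d)}_{d-1}$ is a nonzero multiple of $(|^{2d-2}\o\phi_1)(\k_d\o W)|_{S^{3\rho}W}$.

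The lemma now follows in both directions. If the displayed identity holds for every $d\ge1$, then $J^{(d)}=0$ for every $d\ge0$ by (1) (with $J^{(0)}=0$ trivially), so $J(\k)=\sum_d J^{(d)}=0$. Conversely, assume $J(\k)=0$ and induct downward on $d$ from $D$ to $1$: if the displayed identity is already known for all $e>d$, then $J^{(e)}=0$ for those $e$ by (1), and since each $J^{(e)}$ vanishes in $\Ug$-degrees $\ge e$, the degree-$(d-1)$ component of $0=J(\k)$ is exactly $J^{(d)}_{d-1}$; by (2) this is a nonzero multiple of $(|^{2d-2}\o\phi_1)(\k_d\o W)|_{S^{3\rho}W}$, which therefore vanishes, completing the induction step.

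The main obstacle I anticipate is the PBW-type identity of the second paragraph: expressing $\phi_m$ on $S^m\gf\o W$ through the iterated $\phi_1$-action on $\gf^{\o m}\o W$, and the proportionality of $\Delta_{i,d-i}|_{S^d\gf}$ with the canonical inclusion $S^d\gf\hookrightarrow S^i\gf\o S^{d-i}\gf$. Both are classical for enveloping algebras over a field but must be justified inside $\RepOt$ — either by applying the faithful specialization functors $F_{m,2n}$ with $t=m-2n$ and $m+2n$ large, reducing the statements to $\SVec$ where they are standard, or diagrammatically using that $\sym^{(\gf)}_d$ factors (up to a nonzero scalar) through $\sym^{(\gf)}_i\o\sym^{(\gf)}_{d-i}$. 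Tracking the harmless nonzero combinatorial scalars appearing in conclusions (1) and (2) is the only genuinely tedious bookkeeping.
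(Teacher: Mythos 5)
Your proposal is correct and follows essentially the same route as the paper: decompose $J(\k)$ into bidegree components (your $J^{(d)}_i$ are the paper's $J_{d-i,d}$), note that each factors through the $\phi_1$-component so that its vanishing propagates, and recover the converse by projecting $J(\k)=0$ onto $\Ug$-degree $d-1$ (your downward induction is just a rephrasing of the paper's choice of a maximal offending degree $d'$). The ``PBW-type identity'' you flag as the main obstacle is exactly the paper's one-line observation that $|^{2d-2i-2}\o\phi_{i+1}$ factors through $|^{2d-2i}\o\phi_i$ because the $\phi_i$ are restrictions of a single action map.
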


\begin{proof} By the definition of $\D$ and $\hat\phi$, we have for all $d\ge0$,
$$
(\Ug\o\hat\phi)\circ(\Delta\o W)|_{{S^d\gf\o W}}
= \sum_{i=0}^d (|^{2d-2i}\o\phi_i)|_{S^d\gf\o W}
= S^d\gf\o W + \sum_{i=1}^d (|^{2d-2i}\o\phi_i)|_{S^d\gf\o W} .
$$
Hence,
\begin{align*}
J(\k) &= 
\sum_{d\ge0} 
\Big(
(\Ug\o\hat\phi)\circ(\Delta\circ\kappa_d\o W)
- \kappa_d\o W
\Big)|_{S^{3\rho}W}
\\
&=  \sum_{1\le i\le d} (|^{2d-2i}\o\phi_i)\circ(\k_d\o W)|_{S^{3\rho}W} .
\end{align*}
Set 
$$ J_{i,d}:=(|^{2d-2i}\o\phi_i)\circ(\k_d\o W)|_{S^{3\rho}W}
\quad\in \Hom(S^{3\rho}W,S^{d-i}\g\o W)
\tforall 1\le i\le d.
$$
As $(\phi_i)_i$ are action maps, $|^{2d-2i-2}\o\phi_{i+1}$ factors through $|^{2d-2i}\o\phi_i$ for all $i\ge1$, so $J_{i,d}=0$ implies $J_{i+1,d}=0$ for all $1\le i<d$. This immediately implies the ``if''-part of the assertion.

Assume $J(\k)=0$ and $J_{1,d'}\neq0$
for some $d'\ge0$. Choosing $d'$ maximal, we may assume $J_{1,d}=0$ for all $d>d'$. Hence also $J_{i,d}=0$ for all $d>d'$ and $i\ge1$. So
$$
0
= (\pi_{d'-1}\o W)\circ J(\k)
= \sum_{1\le i\le d: d-i=d'-1} J_{i,d}
= J_{1,d'} ,
$$
a contradiction proving the ``only if''-part.
\end{proof}

Note that there is no condition on $\k_0$ for $J(\k)$ to be zero.

Let us use the notation $c_{k,\ell}:=c_{T^k V,T^\ell V}$ for the braiding in $\RepOt$. Recall from \Cref{sec::RepOt} that it is given by the permutation in $S_{k+\ell}$ which sends $i\mapsto i+\ell$ for $1\le i\le k$ and $i\mapsto i-\ell$ for $k<i\le k+\ell$.

\begin{definition} \label{def::cyc}
We define a distinguished morphism
$$
\cyc := \cyc_e := \sum_{i=1}^e (-1)^i c_{1,i-1}\o|^{e-i}
\quad\in\Hom(T^e V,T^e V) .
$$
\end{definition}

\begin{lemma} \label{lem::action-phi-e}
The action of $\gf$ on $W=\L^e V$ is given by:
\begin{align}
\phi_1 
&= \cyc\circ(|\cup|^{e-1})|_{\gf\o W}
&\quad\in\bigoplus_{d\ge0}\Hom(\gf\o W,W) .
\end{align}
\end{lemma}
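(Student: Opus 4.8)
The plan is to recognize the $\gf$-action $\phi_1$ on $W=\L^eV$ as the restriction to $\gf\o W$ of the canonical \emph{derivation} action of the Lie algebra $\gf$ on the tensor power $T^eV$, and then to rewrite that derivation action using that $W=\im(\alt_e)$.

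Two inputs are needed first. (i) The object $\gf\subset\Ug$ consists of primitive elements: this is immediate from the definition of $\Delta$, since the restriction of $\Delta$ to the degree-one part $\gf$ of $\Ug$ is $\Delta_{1,0}+\Delta_{0,1}$, i.e.\ $\id_\gf$ followed by the two unit insertions. (ii) A primitive element of a cocommutative Hopf algebra acting on an object of a symmetric monoidal category acts on iterated tensor powers by the Leibniz rule. Since $\L^eV=\im(\alt_e)$ is a $\Ug$-submodule of $T^eV$ (the idempotent $\alt_e/e!$ commuting with the $\Ug$-action by its $S_e$-equivariance), $\hat\phi$ is the restriction of the $\Ug$-action on $T^eV$; combining this with (i) and (ii), the restriction of $\hat\phi$ to $\gf$ is the corestriction to $W$ of
$$ D \;:=\; \sum_{i=1}^{e} D_i \;\in\; \Hom(\gf\o T^eV,\,T^eV), $$
where $D_i$ applies $\phi$ to $\gf$ and the $i$-th tensor factor after braiding $\gf$ into place; in particular $D_1 = \phi\o|^{e-1} = |\cup|^{e-1}$. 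Input (ii) is standard (cf.\ \cite{E16}) and can also be obtained by induction on $e$ from the recursive definition of the module structure on tensor powers together with primitivity.

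Next I would reduce each $D_i$ to $D_1$. One has $D_i = \sigma_i\circ D_1\circ(\gf\o\sigma_i^{-1})$, where $\sigma_i\in S_e\subset\Aut(T^eV)$ is the cyclic permutation carrying the first tensor factor to position $i$, which is exactly the automorphism $c_{1,i-1}\o|^{e-i}$ and has sign $(-1)^{i-1}$. Because $W=\im(\alt_e)$, every $\sigma\in S_e$ satisfies $\sigma\circ\iota_W = (-1)^\sigma\iota_W$ for the inclusion $\iota_W\colon W\hookrightarrow T^eV$ of the summand, so the factor $\sigma_i^{-1}$ sitting to the right of $D_1$ is absorbed into $\gf\o\iota_W$ with the sign $(-1)^{\sigma_i}$. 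Summing over $i$ therefore yields
$$ \iota_W\circ\phi_1 \;=\; \Big(\,\sum_{i=1}^{e} (-1)^{\sigma_i}\, c_{1,i-1}\o|^{e-i}\Big)\circ(|\cup|^{e-1})\circ(\gf\o\iota_W), $$
and matching the signs $(-1)^{\sigma_i}$ with \Cref{def::cyc} identifies the bracketed operator as $\cyc_e$, giving $\phi_1 = \cyc_e\circ(|\cup|^{e-1})|_{\gf\o W}$. One should also note that $(|\cup|^{e-1})|_{\gf\o W}$ takes values in $V\o\L^{e-1}V$, on which $\cyc_e$ agrees up to sign with the wedge map onto $\L^eV$; this makes the stated codomain $W$ manifest.

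I expect the main obstacle to be input (ii): verifying cleanly that the recursively defined $\Ug$-module structure on $T^eV$ restricts on the degree-two object $\gf$ to exactly $\sum_i D_i$. Since $\gf\subset T^2V$, the Leibniz action must braid a two-strand object into each tensor slot, and those braidings — and the signs they produce once one passes to the image of $\alt_e$ — are precisely what must be tracked to land on the normalization of $\cyc_e$ fixed in \Cref{def::cyc}. Everything else is formal manipulation with the antisymmetrizer and with the split idempotent defining $W$.
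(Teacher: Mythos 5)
Your proof is correct and takes essentially the same route as the paper's: both start from the categorical derivation (Leibniz) rule for the action of the primitive object $\gf$ on $T^eV$, rewrite the $i$-th summand as a conjugate of $|\cup|^{e-1}$ by the cyclic braiding $c_{1,i-1}\o|^{e-i}$, and absorb the inner permutation into $\alt_e$ with its sign. The only caveat is the final sign: the $i$-cycle $c_{1,i-1}$ has sign $(-1)^{i-1}$, as you say, so your bracketed sum is $-\cyc_e$ under \Cref{def::cyc} (which uses $(-1)^i$); this parity slip is equally present in the paper's own proof and is harmless for everything downstream, where only the vanishing of $\omega$ matters.
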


\begin{proof} The action of $\gf$ on $\L^e V$ is the restriction of the action of $\gf$ on $T^e V$, which is determined by the (categorical) derivation rule and by $\phi=|\cup$ to be
$$
\sum_{i=1}^e (|^{i-1}\o\phi\o |^{e-i}) \circ (c_{2,i-1}\o |^{e+1-i})
=
\sum_{i=1}^e (|^i \cup |^{e-i}) \circ (c_{2,i-1}\o |^{e+1-i}) ,
$$
where $c$ is the symmetric braiding of the category $\RepOt$, which is given by arc diagrams corresponding to permutations. Comparing arc diagrams, we can identify the summands as
$$
(c_{1,i-1}\o |^{e-i})
\circ (|\cup |^{e-1}) 
\circ (|^2\o c_{i-1,1}\o |^{e-i}) .
$$
The assertion follows, as $W=\im\alt_e$ and
$$
(c_{i-1,1}\o |^{e-i})\circ\alt_e
 = (-1)^{(1,2,\dots,i-1)} \alt_e
 = (-1)^i \alt_e .
$$
\end{proof}

\begin{definition}
We describe an operator on arc diagrams as follows: for $1\le k\le d$, $r\in\{\pm1\}$ and each arc diagram $x$ with $2d$ lower points, let $C_{k,d}^r(x)$ be the arc diagram 
$$
C_{k,d}^r(x) := |^{2d-1} \cup |^{\ell-2d+1} \circ(2d-1,2d)^{\delta_{r,-1}} \circ(d,d-1,\dots,k)^{(2)}\circ (x\o |^{e}) .
$$
We denote the linear extension of this operator to linear combinations of such arc diagrams with the same symbol.
\end{definition}

In words, this is the arc diagram resulting from $x$ after adding $e$ vertical lines to the right, yielding $e$ new lower points, and then connecting a lower point in the $k$-th pair of adjacent lower points in $x$ with the first new lower point with a new arc, and moving the other point of the $k$-th pair to this position, where the sign $r$ determines the roles of the two lower points in the $k$-th pair.

\begin{definition} \label{def::I-omega}
For all $d\ge1$, we set
\begin{align*}
\omega(x) := \omega_d(x) 
&:= \sum_{k=1}^d (s_{d-1}\o\cyc)\circ( C_{k,d}^+(x) - C_{k,d}^-(x) )|_{S^{3\rho W}} 
\tforall x\in\Hom(S^{2\rho}W, S^d\gf)
.
\end{align*}
\end{definition}

As the index $d$ for $\omega$ can be inferred from the argument $x$, we will often omit it.

\begin{proposition} \label{prop::omega_d}
Write $\k=\sum_d \k_d=\sum_d s_d\circ\k'_d$ for $\k'\in\Hom(S^{2\rho}W,T^{2d} V)$. Then
$J(\k)=0$ if and only if $\omega(\k'_d)=0$ for all $d\ge1$.
\end{proposition}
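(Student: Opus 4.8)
The strategy is to start from the reduced Jacobi identity of \Cref{lem::Jk-1}, namely that $J(\k)=0$ is equivalent to $(|^{2d-2}\o\phi_1)\circ(\k_d\o W)|_{S^{3\rho}W}=0$ for all $d\ge1$, and then to rewrite the left-hand side of this condition explicitly in terms of the arc-diagram representative $\k'_d$ with $\k_d=s_d\circ\k'_d$. Substituting $\k_d=s_d\circ\k'_d$ and using the formula $\phi_1=\cyc\circ(|\cup|^{e-1})|_{\gf\o W}$ from \Cref{lem::action-phi-e}, the expression $(|^{2d-2}\o\phi_1)\circ(\k_d\o W)$ becomes $(|^{2d-2}\o\cyc)\circ(|^{2d-1}\cup|^{e-1})\circ(s_d\o W)\circ(\k'_d\o|^e)$ (up to bookkeeping of which of the $2d$ lower legs of $s_d\circ\k'_d$ gets joined to $W$). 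The point is that $s_d$, being the symmetrizer $\sym^{(\L^2 V)}_d$ acting on the $d$ pairs of lower points, lets one average over which pair is the ``active'' one: pre-composing with $s_d$ and then feeding one leg into $\phi_1$ produces exactly a sum over $k=1,\dots,d$ of terms in which the $k$-th pair is singled out, its two points playing asymmetric roles — which is what the operators $C^{\pm}_{k,d}$ encode, the sign $r=\pm1$ recording the two choices of role (equivalently, the transposition $(2d-1,2d)^{\delta_{r,-1}}$ in the definition of $C^r_{k,d}$) and the cycle $(d,d-1,\dots,k)^{(2)}$ moving that pair to the end.

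In more detail, the key steps are: (1) expand $s_d$ as a sum over $S_d$ acting synchronously on pairs (its $\sym^{(2)}_d$ part) composed with $\alt_{-2}^{\o d}$ on individual pairs, and observe that composing with $|^{2d-2}\o\phi_1$ only ``sees'' one pair, so the $S_d$-average collapses to a sum over which pair $k$ is selected, each selected pair still carrying its internal $\alt_2$ which is precisely what converts the two legs of that pair into the $r=+1$ minus $r=-1$ combination; (2) identify the resulting diagrammatic operation ``cap off one leg of the $k$-th pair against the first new $W$-leg, slide the partner leg into that slot, and rotate the pair to the rightmost position'' with $C^+_{k,d}(\k'_d)-C^-_{k,d}(\k'_d)$ as in the definition of $C^r_{k,d}$, the remaining $|^{2d-1}\cup|^{\ell-2d+1}$ and the $(d,d-1,\dots,k)^{(2)}$ cycle being exactly the pieces written there; (3) note that the leftover symmetrizer on the $d-1$ untouched pairs is $s_{d-1}$ and the leftover $\cyc$ from $\phi_1$ survives on the $W$-factor, so the whole thing equals $(s_{d-1}\o\cyc)\circ\sum_{k=1}^d(C^+_{k,d}(\k'_d)-C^-_{k,d}(\k'_d))|_{S^{3\rho}W}=\omega(\k'_d)$; (4) conclude via \Cref{lem::Jk-1} that $J(\k)=0\iff\omega(\k'_d)=0$ for all $d\ge1$. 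One should also remark that there is no condition coming from $d=0$, matching the observation after \Cref{lem::Jk-1} that $\k_0$ is unconstrained.

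\textbf{Main obstacle.} The bulk of the work — and the part most prone to sign and indexing errors — is step (2): carefully matching the diagrammatic manipulation obtained from composing $s_d$, the embedding $|^{2d-1}\cup|^{e-1}$, and $\cyc$ against the precise definition of $C^r_{k,d}$, including verifying that the roles of the two points in the $k$-th pair are correctly tracked by the sign $r$ and the transposition $(2d-1,2d)^{\delta_{r,-1}}$, and that the restriction to $S^{3\rho}W$ rather than $S^{2\rho}W\o W$ is harmless because the extra $W$-leg appended in forming $C^r_{k,d}$ is exactly the one that is symmetrized in $S^{3\rho}W$. The conceptual content is routine once the conventions are pinned down, but this translation between the categorical composition and the combinatorial operator is where all the care is needed; the equivalence with $J(\k)=0$ itself is then immediate from \Cref{lem::Jk-1}.
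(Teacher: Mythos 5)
Your proposal is correct and follows essentially the same route as the paper: the paper also reduces to \Cref{lem::Jk-1}, substitutes $\phi_1$ from \Cref{lem::action-phi-e}, and then performs exactly the decomposition you describe, realized concretely as the coset decomposition $\s=\s'\,((2d-1,2d)^r,(d,d-1,\dots,k))$ of $G_d=S_2\wr S_d$ relative to $G_{d-1}$, which yields the sum over $k$ of $C^+_{k,d}-C^-_{k,d}$ with the leftover $s_{d-1}\o\cyc$. Your step (2) is precisely the displayed computation in the paper's proof, so the identification with $\omega(\k'_d)$ goes through as you outline.
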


\begin{proof} By \Cref{lem::Jk-1} and \Cref{lem::action-phi-e}, $J(\k)=0$ if and only if
$$
(|^{2d-2}\o\cyc)_{S^{d-1}\gf\o W}
\circ(|^{2d-1}\cup|^{e-1})_{S^d\gf\o W}
\circ((s_d\circ\k'_d)\o W)|_{S^{3\rho}W}=0 
\tforall d\ge 1.
$$
Set $G_d:=S_2\wr S_d=S_2^d\rtimes S_d$ and define the character $\chi_d\:G_d\to\{\pm1\}$ by $\chi_d((\tau,\theta))=(-1)^\tau$ for $\tau\in S_2^d$ and $\theta\in S_d$. Then
$$
s_d = \sum_{\s\in G_d} \chi_d(\s) \s .
$$
We have a natural embedding of $G_{d-1}$ into $G_d$, and using the corresponding coset decomposition, we can write any $\s\in G_d$ uniquely as $\s=\s' ((2d-1,2d)^r, (d,d-1,\dots,k))$ with $0\le r\le 1$ and $1\le k\le d$, where $(2d-1,2d)\in S_2^d\subset S_{2d}$ and $(d,d-1,\dots,k)\in S_d$ in the semidirect product $G_d$. In this case,
\begin{align*}
& (|^{2d-1}\cup|^{e-1})\circ (\chi_d(\s)\s\o W)
\\
&= (-1)^r \chi_{d-1}(\s') (\s'\o W) \circ (|^{2d-1}\cup|^{e-1})\circ (((2d-1,2d)^r,(d,d-1,\dots,k))\o W)  
\\
&= (-1)^r \chi_{d-1}(\s') (\s'\o W) \circ C^{(-1)^r}_{k,d}(|^{2d})
.
\end{align*}
So $J(\k)=0$ if and only if
$$
\sum_{k=1}^d (s_{d-1}\o\cyc) 
\circ (C_k^+(\k'_d) - C_k^-(\k'_d))|_{S^{3\rho} W}
=0 
\tforall d\ge 1,
$$
as asserted.
\end{proof}

\section{A Jacobi calculus based on pseudographs}

Again, we fix $e\ge1$, $W:=\L^e V$ and $\rho\in\{\pm1\}$. We have seen in \Cref{prop::omega_d} that, in order to determine interpolation PBW deformations, we should compute $\omega(x)$ for $x$ ranging over a suitable set of basis vectors in the relevant hom-spaces. 

Recall the definition of the distinguished morphisms $x(\g,\l)$ in $\RepOt$ (see \Cref{def::x-g-l}). We have seen in \Cref{prop::Hom-sym} that certain symmetrizations of them form a basis of the hom-spaces $\Hom(S^{\pm2}\L^e V,S^d\L^2V)$. To describe the images of these basis elements under $\omega$, we define three constructions for $e$-valent $\NN$-edge-labelled pseudographs: 

\begin{definition} \label{def::modifications}
For $\g\in\Gamma_{2,e}$, $1\le i\le e$, and $1\le k<|\g_i|$, and $\ell\ge0$, let $\g^{(i,k)}, \g^{(i,-k)}, \g^{(\ell)}\in\Gamma_{4,e}$ be the $\NN$-edge-labelled pseudographs with vertices $\{1,2,3,4\}$ and with $2e$ $\NN$-edge-labelled edges (defined as elements in $\{1,\dots,4\}^2\times\NN$) 
$$
\g^{(i,k)}_j:=\begin{cases}
\g_j & 1\le j\le e, j\neq i \\
(\g_{i,1},4,k-1) & j=i \\
(\g_{i,2},3,|\g_i|-k) & j=e+1 \\
(3,4,0) & e+1 \le j \le 2e
\end{cases}, 
\qquad
\g^{(i,-k)}_j:=\begin{cases}
\g_j & 1\le j\le e, j\neq i \\
(\g_{i,1},3,k-1) & j=i \\
(\g_{i,2},4,|\g_i|-k) & j=e+1 \\
(3,4,0) & e+1 \le j \le 2e
\end{cases},
$$
$$
\tand\quad
\g^{(\ell)}_j:=\begin{cases}
\g_j & 1\le j\le e \\
(3,4,\ell-1) & j=e+1 \\
(3,4,0) & e+1 \le j \le 2e
\end{cases}
\qquad,
$$
respectively.
\end{definition}

\begin{remark} Note that all constructions yield pseudographs in $\Gamma_{4,e}$, not just in $\ti\Gamma_{4,e}$, as no new loops are produced. 
\end{remark}

We give a few visualizations of these constructions.

\newcommand\mybox{\node[rectangle,draw=gray,anchor=center,inner xsep=0.65cm,inner ysep=0.25cm] at (1.5,1) {...};}
\begin{example} \label{expl::gamma-i-k}
For $\g$ as in the definition and $\g_i$ an edge between vertices $1$ and $2$,
$$
\g^{(i,k)}_j = \myxmoredots{3}{1}{ 
(a3) to[bend right=20] node[below right=-1pt]{...} (b1)
(a3) to[bend left=40] node[below right]{$0$ ($e-1$ times)} (b1)
(a2) to[bend left] node[above right]{$|\g_i|-k$} (a3)
(a1) to[bend right] node[below left]{$k-1$} (b1);
\mybox
},
\qquad
\g^{(i,-k)}_j = \myxmoredots{3}{1}{ 
(a3) to[bend right=20] node[below right=-1pt]{...} (b1)
(a3) to[bend left=40] node[below right]{$0$ ($e-1$ times)} (b1)
(a1) to[bend left=60] node[above right]{$k-1$} (a3)
(a2) to[bend right] node[below left]{$|\g_i|-k$} (b1);
\mybox
},
$$
where the boxed part $\myxdots{2}{;\mybox}$ shall represent the graph which results from $\g$ after removing the edge $\g_i$.
\end{example}

\begin{example} For $\g,\ell$ as in the definition, 
$$
\g^{(\ell)} = \myxmoredots{3}{1}{
(a3) to[bend right=10] node[above]{$\ell$} (b1)
(a3) to[bend left=20] node[below right=-1pt]{...} (b1)
(a3) to[bend left=110] node[below right]{$0$ ($e-1$ times} (b1);
\mybox
} ,
$$
where the boxed part $\myxdots{2}{;\mybox}$ shall now represent $\g$.
\end{example}

\begin{example} \label{expl::graph-mods-1}
For $e=1$, consider: $\g = \myxdots{2}{
(a1) -- node[above] {3} (a2)
}$,
$$
\g^{(1,3)} = \myxmoredots{3}{1}{ 
(a2) -- node[above] {0} (a3)
(a1) -- node[below left] {2} (b1)
}
\qquad
\g^{(1,-1)} = \myxmoredots{3}{1}{
(a1) to[bend left] node[above] {0} (a3)
(a2) -- node[right] {2} (b1)
}
\qquad
\g^{(5)} = \myxmoredots{3}{1}{
(a1) -- node[above] {3} (a2)
(a3) -- node[below right] {4} (b1)
}
$$
\end{example}

\begin{example} \label{expl::graph-mods-2}
For $e=3$ and with edges $\g_1,\g_2,\g_3$ listed from left to right in the picture of $\g$, consider:
$$
\g = \myxdots{2}{ (a1) -- node[above] {1} (a2); \myloop{a1}{0} \myloop{a2}{2} 
}
\qquad
\g^{(3,1)} = \myxmoredots{3}{1}{
(a1) -- node[above] {1} (a2)
(a2) -- node[left] {0} (b1)
(a2) -- node[above] {1} (a3)
(a3) -- node[left] {0} (b1)
(a3) to[bend left] node[below right] {0} (b1);
\myloop{a1}{0}
}
\qquad
\g^{(4)} = \myxmoredots{3}{1}{ 
(a1) -- node[above] {1} (a2)
(a3) to[bend right=45] node[below] {3} (b1)
(a3) -- node[below] {0} (b1)
(a3) to[bend left=45] node[below] {0} (b1);
\myloop{a1}{0} \myloop{a2}{2}
}
$$
\end{example}

\begin{remark} \label{rem::fourth-point}
Recall from \Cref{cor::Hom-tensor} that a basis of $\Hom(T^4\L^eV,\Ug)$ is parameterized by pairs $(\g,\l)$, where $\g\in\G_{4,e}$ and $\l\in P^+$. Dualizing one copy of the object $\L^e V$, we obtain a parametrization of $\Hom(T^3\L^eV,\Ug\o\L^eV)$ by the same datum; in the graphical representations of elements of $\G_{4,e}$ above we have already drawn the fourth vertex in a second row below the other vertices to emphasize it is representing a tensor factor of the target object, not of the source object.
\end{remark}

\begin{definition} \label{def::I-and-y}
Using the distinguished morphism $\cyc$ from \Cref{def::cyc}, we define
$$
I_d(y) := (s_{d-1}\o\cyc)\circ y|_{S^{3\rho}\L^e V}
\tforall y\in\Hom(T^3\L^eV, T^{2d-2}V\o\L^e V) ,
$$
and, as a short-hand, we set $y_d(\g',\l):=I_d(x(\g',\l))$, an element in $\Hom(S^{3\rho}\L^e V,\Ug\o\L^e V)$ for all $\g'\in\G_{4,e}$ and $\l\in P^+$.
\end{definition}

\begin{lemma} For all $\g\in\Gamma_{2,e}$, $\l\in P^+$, $\g_i$ an edge of $\g$, $\l_j$ a part of $\l$, set $x:=x(\g,\l)$, $d:=|\g|+|\l|$, $p_i:=|\g_1|+\dots+|\g_{i-1}|$, and $q_j:=|\g|+\l_1+\dots+\l_{j-1}$. Then 
\begin{align}
\label{eq::pf-1} 
I_d(C_{p_i+m}^\pm(x))
&= (-1)^{|\g_i|-m} y_d(\g^{(i,\pm m)},\l) ,
\tforall 1\le m\le |\g_i|,
\\
\label{eq::pf-2}
 \pm I_d(C_{q_j+m}^\pm(x))
 &= y_d(\g^{(\l_j)},\l-\l_j) 
\tforall 1\le m\le\l_j .
\end{align}
\end{lemma}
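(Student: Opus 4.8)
The statement is a pair of explicit identities between morphisms in (the ind-completion of) $\RepOt$, so the plan is to prove both of them by a direct computation with arc diagrams, reading off each side from the definitions. Fix the edge $\g_i$ (resp.\ the part $\l_j$), set $x=x(\g,\l)$ and $d=|\g|+|\l|$. By \Cref{def::x-g-l} the $2d$ lower points of $x$ split into $d$ adjacent pairs, with the pairs $p_i+1,\dots,p_i+|\g_i|$ carrying the zig-zag of the edge $\g_i$ (two legs to the blocks $B_{\g_{i,1}},B_{\g_{i,2}}$ joined through these pairs) and the pairs $q_j+1,\dots,q_j+\l_j$ carrying the closed arc loop of the part $\l_j$. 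Now unwind the operator $C^\pm_{k,d}$ from its definition with $k=p_i+m$ (resp.\ $k=q_j+m$): the permutation $(d,d-1,\dots,k)^{(2)}$ moves pair $k$ to the last slot without disturbing pairs $1,\dots,k-1$, the factor $(2d-1,2d)^{\delta_{r,-1}}$ records which of the two points of pair $k$ is capped off, and $|^{2d-1}\cup|^{e-1}$ glues that point to the first of the $e$ strands adjoined by $\,\cdot\o|^e$. The net effect on the arc diagram is to \emph{cut} the arc structure at the $k$-th pair.

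For an edge, cutting the zig-zag splits it into the $\g_{i,1}$-side (the leg to $B_{\g_{i,1}}$ and the $m-1$ pairs below it), which gets re-routed to the last $e$ lower points — the block on which $\cyc$ acts, i.e.\ the future fourth vertex — and the $\g_{i,2}$-side (the leg to $B_{\g_{i,2}}$ and the $|\g_i|-m$ pairs above it), which gets re-routed, through the $e-1$ remaining adjoined strands, to a new block of $e$ upper points — the future third vertex; the role of the two severed ends is interchanged according to the sign $r$. For a loop coming from a part, cutting at any single pair opens it into one path from the third to the fourth new block through the remaining $\l_j-1$ pairs. By \Cref{lem::ASD} the $G_{4,e,d-1}$-orbit of the resulting diagram is determined by its arc-sequence datum, and comparing with \Cref{def::modifications} this datum is exactly that of $x(\g^{(i,\pm m)},\l)$ (the $\g_{i,1}$-side giving the edge $(\g_{i,1},4,m-1)$ for $r=+1$ and $(\g_{i,1},3,m-1)$ for $r=-1$, the $\g_{i,2}$-side giving $(\g_{i,2},3,|\g_i|-m)$ resp.\ $(\g_{i,2},4,|\g_i|-m)$, and the $e-1$ spare strands giving the trivial edges $(3,4,0)$), respectively that of $x(\g^{(\l_j)},\l-\l_j)$; in particular no new loops appear, so $\g^{(i,\pm m)},\g^{(\l_j)}\in\G_{4,e}$ and, since $\l\in P^+$, also $\l$ and $\l-\l_j$ lie in $P^+$, so \Cref{prop::orbit-reps} applies and $\chi$ is trivial on the stabilizers of these representatives.

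It remains to apply $I_d$ and to track the sign. Using the defining formula for $I_d$, the identity \Cref{eq::formula-f} and \Cref{lem::general-group-action}, applying $I_d$ to an arc diagram yields $y_d$ of its orbit representative multiplied by $\chi(g)$, where $g\in G_{4,e,d-1}$ carries the chosen representative onto $C^\pm_{k,d}(x)$; one may arrange, by the freedom in the point choices of \Cref{def::x-g-l} guaranteed by \Cref{lem::choices}, that $g$ acts trivially on the last $e$ lower points, so that the non-symmetrizing factor $\cyc$ acts identically on both sides and contributes nothing to the relative sign (its interaction with the alternating idempotent on that block being governed by the relation $(c_{i-1,1}\o|^{e-i})\circ\alt_e=(-1)^i\alt_e$ from the proof of \Cref{lem::action-phi-e}). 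The only nontrivial contribution to $\chi(g)$ comes from re-standardizing the cut zig-zag: the $\g_{i,1}$-side already appears in the ascending, $\g_{i,1}$-first orientation used in $x(\g^{(i,\pm m)},\l)$, and permutations of pairs are $\chi$-trivial, while the $\g_{i,2}$-side appears in the reverse orientation and must be turned around, which amounts to reversing the order of its $|\g_i|-m$ pairs ($\chi$-trivial) together with interchanging the two points inside each of them, contributing $(-1)^{|\g_i|-m}$; this gives \eqref{eq::pf-1}. For a part the opened path does not depend on $m$ (different cut positions differ by rotations of the loop, which are $\chi$-trivial because $\l_j$ is even), while the two signs $r=\pm1$ give the two reverse traversals of the opened path through its $\l_j-1$ pairs, differing by $(-1)^{\l_j-1}=-1$, which is precisely the sign $\pm$ in \eqref{eq::pf-2}.

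The diagrammatic identification in the first two paragraphs is routine once one fixes a convenient edge listing; the delicate part, and the main obstacle, is the sign bookkeeping of the last paragraph — in particular keeping the action of $\cyc$ consistent between the two sides and verifying that re-standardizing the cut zig-zag introduces no signs beyond the asserted $(-1)^{|\g_i|-m}$ (resp.\ beyond the $\pm$ for parts). With the conventions pinned down, this reduces to elementary counting with the character $\chi$.
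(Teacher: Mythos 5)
Your proof is correct and follows essentially the same route as the paper's: identify the effect of $C^\pm_{k,d}$ as cutting the relevant arc sequence at the $k$-th pair, match the resulting diagram to $x(\g^{(i,\pm m)},\l)$ resp.\ $x(\g^{(\l_j)},\l-\l_j)$ via the arc sequence datum and \Cref{lem::ASD}, and extract the signs from $\chi$ applied to the relating permutation (the reversal of the $\g_{i,2}$-side giving $(-1)^{|\g_i|-m}$, and the opposite traversal of the opened loop giving the $\pm$). If anything, you are slightly more explicit than the paper about the $r=-1$ case of \eqref{eq::pf-1}, the independence of the cut position $m$ in \eqref{eq::pf-2}, and the role of the non-symmetrizing factor $\cyc$ on the fourth block.
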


\begin{proof} 
We observe that on each side of each asserted identity, we have a morphism which is up to a sign the symmetrization of a single arc diagram. Hence, to verify the identities, it suffices to check that the arc diagrams involved in both sides of each equation have the same arc sequence datum (by \Cref{lem::ASD}), and that permutations on the upper and lower points relating them evaluated with $\chi=\chi_{2,e,d}$ produce the asserted signs.

By construction of $x=x(\g,\l)$, for all $\g_i,p_i,p$ as in the statement, the arcs sequence of $x$ passing through the lower points $2(p_i+p)-1$ and $2(p_i+p)$ is of the form
$$
|^{(\g_{i,1})}\cap^{|\gamma_i|-1}|^{(\g_{i,1})}
$$
where the indices $\g_{i,1},\g_{i,2}\in\{1,2\}$ in parentheses shall indicate that the first and the last arc are connected to the groups $B_{\g_{i,1}}$ and $B_{\g_{i,2}}$ of $e$ upper point, respectively. Those arcs form an arc sequence of the form $(\g_{i,1},\g_{i,2},|\gamma_i|)$ in $x$. In the modified arc diagram $C_{p_i+p}^+(x)$, the lower point $2(p_i+p)$ is connected with $B_3$, and the lower point $2(p_i+p)-1$ is connected with $B_4$. The resulting arcs can be represented as
$$
|^{(\g_{i,1})} \cap^{p-2} |^{(4)} |^{(3)} \cap^{|\gamma_i|-p-1} |^{(\g_{i,2})},
$$
where the arcs $|^{(\g_{i,1})}\cap^{p-2}|^{(4)}$ degenerate to a single arc between $B_{\g_{i,1}}$ and $B_4$ if $p=1$, and similarly, the arcs $|^{(3)}\cap^{|\g_i|-p-1}|^{(\g_{i,2})}$ degenerate to a single arc between $B_{\g_{i,2}}$ and $B_3$ if $p=|\g_i|$.

As a consequence, the arc sequence $(\g_{i,1},\g_{i,2},|\gamma_i|)$ is replaced by two arc sequences $(\g_{i,2},3,|\gamma_i|-p)$ and $(\g_{i,1},4,p-1)$.  Beyond this change in the arc sequence datum, $C_{p_i+p}^+(x)$ also has an additional set of $e-1$ arcs between $B_3$ and $B_4$ stemming from the evaluation morphism. All other arcs in $x$ are left unchanged. Hence, the arc sequence data on both sides agree.

However, the order of the lower points of the right part
$$
|^{(3)}\cap^{|\g_i|-p-1}|^{(\g_{i,2})}
$$
is inverted compared to the order of the corresponding lower points in $x(\g^{(i,p)},\l)$, and the permutation relating them inverts the order in $|\g_i|-p$ pairs of lower points, hence producing a sign $(-1)^{|\g_i|-p}$. This proves \Cref{eq::pf-1} for the sign $\pm=+$.

Similarly, the effect of $C_{p_i+p}^-$ on $x$ is described by the modification
$$
|^{(\g_{i,1})}\cap^{|\gamma_i|-1}|^{(\g_{i,1})}
\mapsto
|^{(\g_{i,1})} \cap^{p-2} |^{(3)} |^{(4)} \cap^{|\gamma_i|-p-1} |^{(\g_{i,2})} ,
$$
but now we permute $B_1$ and $B_2$ by pre-composing with $\tau^{(e)}$, proving \Cref{eq::pf-1} for the sign $\pm=-$.

Analogously, for $\l_j,q_j,p$ as in the above summation, the effect of $C_{q_j+p}^+$ and $C_{q_j+p}^-$ on $x$ can be represented as follows:
\begin{align*}
    \arch{\cap^{\l_j-1}} 
    &\mapsto
    \arch{\cap^{p-2}|^{(4)}|^{(3)}\cap^{\l_j-p-1}} ,
    \\
    \arch{\cap^{\l_j-1}} 
    &\mapsto
    \arch{\cap^{p-2}|^{(3)}|^{(4)}\cap^{\l_j-p-1}}.
\end{align*}
On the left-hand side, we see an arc sequence with datum $(0,0,\l_j)$, while on the right-hand side, there is an arc sequence with datum $(3,4,\l_j-1)$. The order of all $2|\l_i|$ lower points in the second right-hand side is inverted and $|\l_i|$ is odd, proving \Cref{eq::pf-2}.
\end{proof}

\begin{definition}
For a graph $\g'\in\G_{4,e}$, let $\ol{\g'}$ be the graph with the vertices $1,2$ interchanged.
\end{definition}

\begin{lemma} \label{lem::modifications}
Consider $\g\in\G_{2,e}$, $1\le i\le e$, $1\le k\le|\g_i|$, and $\l\in P^+$. Set $d:=|\g|+|\l|$.

(a)  If $\g_i$ is a loop, i.e., $\g_{i,1}=\g_{i,2}$, then 
$$
y_d(\g^{(i,-k)},\l)
 = y_d(\g^{(i,|\g_i|+1-k)},\l)
.
$$

(b)  If $\g_i$ is not a loop, i.e., $\g_{i,1}\neq\g_{i,2}$, then $\g^{(i,-k)}=\ol{\left((\g^t)^{i,|\g_i|+1-k}\right)}$ and
$$
y_d(\g^{(i,-k)},\l)
 = \rho \sgn(\g) (-1)^{|\g_i|} 
 y_d((\g^t)^{(i,|\g_i|+1-k)},\l)
.
$$
\end{lemma}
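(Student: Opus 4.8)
The plan is to reduce the statement to a comparison of arc sequence data, exactly as in the previous lemma. Both sides of each asserted identity are, by \Cref{def::I-and-y} and the definition of $y_d$, equal to $I_d$ applied to a single arc diagram $x(\g',\l)$ for the relevant modified graph $\g'$, so by \Cref{lem::ASD} it suffices to check that the two arc diagrams lie in the same $G_{4,e,d}$-orbit and then to track the sign contributed by the permutation relating them (evaluated with $\chi_{4,e,d}$), together with any sign coming from the symmetrization maps $s_{d-1}$ and $\cyc$ inside $I_d$.

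For part (a), I would first unwind \Cref{def::modifications}: when $\g_i$ is a loop at vertex $v=\g_{i,1}=\g_{i,2}$, the graph $\g^{(i,k)}$ replaces the loop by an edge $(v,4,k-1)$ and an edge $(v,3,|\g_i|-k)$ (plus the $e-1$ edges $(3,4,0)$), whereas $\g^{(i,-k)}$ replaces it by an edge $(v,3,k-1)$ and an edge $(v,4,|\g_i|-k)$. Comparing, $\g^{(i,-k)}$ has edges $(v,3,k-1),(v,4,|\g_i|-k)$ while $\g^{(i,|\g_i|+1-k)}$ has edges $(v,4,|\g_i|-k),(v,3,k-1)$ --- the same multiset of edges, so $\g^{(i,-k)}=\g^{(i,|\g_i|+1-k)}$ literally as elements of $\G_{4,e}$ (up to the choice of edge listing, which by \Cref{lem::choices} does not affect the $G_{4,e,d}$-orbit of the associated arc diagram, hence not $y_d$). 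Thus the identity in (a) holds with trivial sign; the only point to be careful about is that no extra sign is introduced, which follows because $I_d$ is applied to arc diagrams with the same arc sequence datum and the identifying permutation can be chosen to fix all upper-point blocks and all lower-point pairs, so $\chi_{4,e,d}$ evaluates to $1$ on it.

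For part (b), when $\g_i$ is a genuine edge between the (distinct) vertices $\g_{i,1}$ and $\g_{i,2}$, I would again compare edge data: $\g^{(i,-k)}$ has the two new edges $(\g_{i,1},3,k-1)$ and $(\g_{i,2},4,|\g_i|-k)$, while $(\g^t)^{(i,|\g_i|+1-k)}$ --- note $\g^t$ swaps the endpoints, so its $i$-th edge goes from $\g_{i,2}$ to $\g_{i,1}$ --- has the new edges $(\g_{i,2},4,|\g_i|-k)$ and $(\g_{i,1},3,k-1)$. So after interchanging vertices $1$ and $2$ we do get the same graph, i.e. $\g^{(i,-k)}=\ol{(\g^t)^{(i,|\g_i|+1-k)}}$, which is the first claim. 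For the sign, the identifying permutation must swap the upper-point blocks $B_1$ and $B_2$ (contributing $(-1)^e$ per such block, but this is absorbed into the definition of the morphisms living on $\L^e V=\im\alt_e$ as in the proof of \Cref{lem::action-phi-e}), and it must reverse the order of lower points along certain arc sequences. Tracking this exactly as in the proof of the preceding lemma: swapping the two endpoint-blocks of every arc sequence of type $(1,2,\ell)$ in the original $\g$-part forces a reversal in $\ell$ pairs of lower points, producing the factor $\prod_{\g_j:\g_{j,1}=1,\g_{j,2}=2}(-1)^{|\g_j|}=\sgn(\g)$, the $\rho$ comes from the fact that we are working modulo $S^{3\rho}W$ and the comparison passes through $x^\rho$ as in \Cref{def::x-rho}/\Cref{prop::Hom-sym}, and the extra $(-1)^{|\g_i|}$ comes from reversing the $|\g_i|$-pair-long arc sequence that was split off from $\g_i$ itself. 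The main obstacle will be this sign bookkeeping in (b): making sure that the reversal contributions from the unchanged part of the graph, from the split edge, and from the block swap are counted exactly once each and combine to $\rho\,\sgn(\g)\,(-1)^{|\g_i|}$, rather than accidentally double-counting the reversal on the new edges $(3,4,0)$ (which carry label $0$ and hence contribute no sign) or miscounting the parity coming from $\cyc$ (which acts only on the last $e$ points and is symmetric under the identifications used).
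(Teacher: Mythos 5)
Your overall strategy --- reduce both sides to symmetrizations of single arc diagrams, compare arc sequence data via \Cref{lem::ASD}, and track the sign of a relating permutation in $G_{4,e,d}$ together with the effect of the block swap on $S^{3\rho}W$ --- is exactly the paper's, and your treatment of part (a) and of the graph identity in (b) is at the same level of detail as the paper's.

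However, the sign bookkeeping in (b) is misattributed, even though the total comes out right. You claim that the factor $\sgn(\g)=\prod_{\g_j:\g_{j,1}=1,\g_{j,2}=2}(-1)^{|\g_j|}$ arises from reversing \emph{every} $(1,2)$-arc sequence ``in the original $\g$-part,'' and that an \emph{extra} $(-1)^{|\g_i|}$ comes from ``reversing the $|\g_i|$-pair-long arc sequence that was split off from $\g_i$ itself.'' Neither mechanism is available. In the modified diagrams the arc sequence of $\g_i$ no longer runs between $B_1$ and $B_2$: it has been replaced by sequences of types $(\cdot,3,k-1)$ and $(\cdot,4,|\g_i|-k)$, which together contain $|\g_i|-1$ pairs (not $|\g_i|$), and no element of $G_{4,e,d}$ can reverse an arc sequence whose two endpoints lie in different blocks, since each factor $S_e$ acts within a single block. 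The correct accounting is that the relating permutation reverses exactly the $(1,2)$-sequences of the edges $\g_j$ with $j\neq i$, contributing $\prod_{j\neq i,\ \g_{j,1}\neq\g_{j,2}}(-1)^{|\g_j|}=\sgn(\g)\,(-1)^{|\g_i|}$ in a single step: the factor $(-1)^{|\g_i|}$ appears because the edge $\g_i$ is \emph{excluded} from the product defining $\sgn(\g)$, not because its split-off pieces are reversed. Your two misattributions cancel numerically, but as written the argument would not survive scrutiny. The factor $\rho$ is, as you say, just the action of the block swap $\tau^{(e)}$ on $S^{3\rho}W$; the detour through $x^\rho$ and \Cref{def::x-rho} is not needed for this.
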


\begin{proof} (a) It follows directly from the definitions of $\g^{(i,-k)}$ and $\g^{(i,|\g_i|+1-k)}$ that the graphs coincide (up to the order of the listing of the edges).

(b) Similar to (a), we note that the graphs $\g^{(i,-k)}$ and $\ol{\left((\g^t)^{i,|\g_i|+1-k}\right)}$ agree by definition. The last identity is shown using an argument analogous to that in the proof of \Cref{lem::sign}: the two arc diagrams $x(\g^{(i,-k)},\l)$ and $x((\g^t)^{(i,|\g_i|+1-k)},\l)\circ\tau^{(e)}$ have the same arc sequence datum, and a permutation in $G_{4,e,d}$ relating them is given by inverting the order of lower points along all arc sequences between $B_1$ and $B_2$. Those arc sequences are the same as the corresponding arc sequences, except for the arc sequence corresponding to $\g_i$, which is replaced by sequences not between $B_1$ and $B_2$. Hence, the sign of a permutation relating the two arc diagrams is given by $\sgn(\g)(-1)^{|\g_i|}$. Permuting the upper points produces a factor $\rho$.
\end{proof}

Note that the argument in the proof of \Cref{lem::sign} can be visualized using some of the pictures in \Cref{expl::graph-mods-1} and \Cref{expl::graph-mods-2}.

\newcommand\eqOmega{
 \omega(x(\g,\l))
 &= 
 \sum_{\g_i:\g_{i,1}\neq\g_{i,2}}
 \sum_{m=1}^{|\g_i|} (-1)^{|\g_i|+m} y_d(\g^{(i,m)},\l) 
 + \rho \sgn(\g) (-1)^{|\g_i|+m} y_d((\g^t)^{(i,m)},\l) )
 \\
 &\qquad\qquad + 2 \sum_{\l_j} \l_j y_d(\g^{(\l_j)},\l-\l_j) 
}
 
\begin{proposition} \label{prop::omega}
For all $\g\in\Gamma_{2,e}$ and $\l\in P^+$, set $d:=|\g|+|\l|$, then
\begin{align} \label{eq::omega}
 \eqOmega
 . \notag
\end{align}
\end{proposition}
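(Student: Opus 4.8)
The plan is to combine the recursion for $\omega$ from \Cref{def::I-omega} with the computation of $I_d$ applied to the modified arc diagrams $C_{k,d}^\pm(x)$ carried out in \Cref{eq::pf-1} and \Cref{eq::pf-2}, and then to collapse the resulting sum using the symmetry identities of \Cref{lem::modifications}. Concretely, starting from
$\omega(x(\g,\l)) = \sum_{k=1}^d (s_{d-1}\o\cyc)\circ(C_{k,d}^+(x) - C_{k,d}^-(x))|_{S^{3\rho}W}$,
I would first reindex the outer sum over $k$ according to which arc sequence of $x=x(\g,\l)$ the $k$-th pair of lower points belongs to: it either lies in the block of $2|\g_i|$ lower points coming from an edge $\g_i$ of $\g$ (in which case $k = p_i + m$ for a unique $1\le m\le|\g_i|$, with $p_i$ as defined in the lemma preceding \Cref{lem::modifications}), or it lies in the block of $2\l_j$ lower points coming from a part $\l_j$ of $\l$ (in which case $k = q_j + m$ for $1\le m\le\l_j$).

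Next I would substitute. For the edge terms, \Cref{eq::pf-1} gives $I_d(C_{p_i+m}^\pm(x)) = (-1)^{|\g_i|-m} y_d(\g^{(i,\pm m)},\l)$, so the contribution of an edge $\g_i$ to $\omega(x(\g,\l))$ is $\sum_{m=1}^{|\g_i|} (-1)^{|\g_i|-m}\big(y_d(\g^{(i,m)},\l) - y_d(\g^{(i,-m)},\l)\big)$, which I rewrite as $\sum_{m=1}^{|\g_i|}(-1)^{|\g_i|+m}\big(y_d(\g^{(i,m)},\l) - y_d(\g^{(i,-m)},\l)\big)$ since $(-1)^{|\g_i|-m} = (-1)^{|\g_i|+m}$. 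For the partition terms, \Cref{eq::pf-2} gives $I_d(C_{q_j+m}^+(x)) = y_d(\g^{(\l_j)},\l-\l_j)$ and $I_d(C_{q_j+m}^-(x)) = -y_d(\g^{(\l_j)},\l-\l_j)$, independent of $m$; summing over the $\l_j$ values of $m$ yields $\l_j\big(y_d(\g^{(\l_j)},\l-\l_j) - (-y_d(\g^{(\l_j)},\l-\l_j))\big) = 2\l_j\, y_d(\g^{(\l_j)},\l-\l_j)$, which is exactly the last term of \Cref{eq::omega}.

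The remaining work is to dispose of the $y_d(\g^{(i,-m)},\l)$ terms for edges. Here I split on whether $\g_i$ is a loop. If $\g_i$ is a loop, \Cref{lem::modifications}(a) says $y_d(\g^{(i,-m)},\l) = y_d(\g^{(i,|\g_i|+1-m)},\l)$; reindexing $m\mapsto|\g_i|+1-m$ in the sum $\sum_{m=1}^{|\g_i|}(-1)^{|\g_i|+m} y_d(\g^{(i,-m)},\l)$ turns it into $\sum_{m=1}^{|\g_i|}(-1)^{2|\g_i|+1-m} y_d(\g^{(i,m)},\l) = -\sum_{m=1}^{|\g_i|}(-1)^{|\g_i|+m} y_d(\g^{(i,m)},\l)$ (using $(-1)^{2|\g_i|+1-m} = (-1)^{1-m} = -(-1)^{m} = -(-1)^{|\g_i|+m}\cdot(-1)^{|\g_i|}$; here one must track the parity of $|\g_i|$ — since $\g\in\Gamma_{2,e}$ loops carry odd labels, so $(-1)^{|\g_i|} = -1$ for loops, and the signs work out so that the loop's $\g^{(i,m)}$ and $\g^{(i,-m)}$ contributions cancel entirely). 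This is why no loop terms survive in \Cref{eq::omega}. If $\g_i$ is not a loop, \Cref{lem::modifications}(b) gives $y_d(\g^{(i,-m)},\l) = \rho\sgn(\g)(-1)^{|\g_i|} y_d((\g^t)^{(i,|\g_i|+1-m)},\l)$; reindexing $m\mapsto|\g_i|+1-m$ converts $-\sum_m (-1)^{|\g_i|+m} y_d(\g^{(i,-m)},\l)$ into $+\rho\sgn(\g)\sum_m (-1)^{|\g_i|+m} y_d((\g^t)^{(i,m)},\l)$, matching the second summand inside the first double sum of \Cref{eq::omega}. Collecting the loop cancellations and the non-loop pairings gives the claimed formula.

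The main obstacle I anticipate is bookkeeping the signs and reindexings correctly — in particular, verifying that for a loop $\g_i$ (which has $|\g_i|$ odd since $\g\in\Gamma_{2,e}$) the "$+m$" and "$-m$" contributions cancel rather than reinforce, and conversely that for a non-loop edge the reindexing $m\mapsto|\g_i|+1-m$ combined with the factors $\rho$, $\sgn(\g)$, $(-1)^{|\g_i|}$ from \Cref{lem::modifications}(b) reproduces exactly the sign $\rho\sgn(\g)(-1)^{|\g_i|+m}$ appearing in \Cref{eq::omega}. A secondary subtlety is making sure the $d$-index on $y_d$ and on $\g^{(\l_j)}$ (which lives in $\G_{4,e}$ with total label $|\g|+|\l|-\l_j = d - \l_j$, matching the argument $\l-\l_j$) is consistent throughout; this is routine once one notes $I_d$ is defined so that $\omega(x)$ always lands in $\Hom(S^{3\rho}W,\Ug\o W)$ regardless of the internal degree splitting.
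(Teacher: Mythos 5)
Your proposal is correct and follows essentially the same route as the paper: the same decomposition of the sum over $k$ into edge blocks and partition blocks, substitution of \Cref{eq::pf-1} and \Cref{eq::pf-2}, and the same reindexing $m\mapsto|\g_i|+1-m$ combined with \Cref{lem::modifications} to cancel the loop contributions and pair up the non-loop ones. One small slip: in the loop case your displayed reindexed sum should read $+\sum_{m}(-1)^{|\g_i|+m}y_d(\g^{(i,m)},\l)$ rather than $-\sum_{m}(-1)^{|\g_i|+m}y_d(\g^{(i,m)},\l)$ (since $(-1)^{2|\g_i|+1-m}=(-1)^{|\g_i|+m}$ for $|\g_i|$ odd), which is exactly what your own parenthetical sign chain gives and what is needed for the claimed cancellation.
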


\begin{proof}
By \Cref{def::I-omega}, with $x=x(\g,\l)$,
\begin{equation} \label{eq::omega-sym-x-tmp}
\omega(x) = I_d \Big(
\sum_{\g_i}
\sum_{m=1}^{|\g_i|} C_{p_i+m}^+(x) - C_{p_i+m}^-(x) 
+\sum_{\l_j}
\sum_{m=1}^{\l_i} C_{q_j+m}^+(x) - C_{q_j+m}^-(x) 
\Big) ,
\end{equation}
where $p_i:=|\g_1|+\dots+|\g_{i-1}|$ and $q_j:=|\g|+\l_1+\dots+\l_{j-1}$.

Assume $\g_i$ is a loop in $\g$. Then $|\g_i|$ is odd and by \Cref{lem::modifications}, $\g^{(i,-m)}$ and $\g^{(i,|\g_i|+1-m)}$ have the same image under $I_d$ for all $1\le m\le|\g_i|$. Hence, using \Cref{eq::pf-1}, we compute
\begin{align*}
\sum_{m=1}^{|\g_i|} C^+_{p_i+m}(x) - C^-_{p_i+m}(x)
 &= (-1)^{|\g_i|} \sum_{m=1}^{|\g_i|} (-1)^m 
 (y_d(\g^{(i,m)},\l) - y_d(\g^{(i,-m)}),\l)
 \\
 &= - \sum_{m=1}^{|\g_i|} (-1)^m 
 ( y_d(\g^{(i,m)},\l) - y_d(\g^{(i,|\g_i|+1-m)},\l) )
 \\
 &= - \sum_{m=1}^{|\g_i|} (-1)^m 
 ( y_d(\g^{(i,m)},\l) - (-1)^{|\g_i|+1} y_d(\g^{(i,m)},\l) )
 = 0 .
\end{align*}
Assume $\g_i$ is not a loop, so $\g_{i,1}=1$ and $\g_{i,2}=2$. Then again using \Cref{eq::pf-1} and \Cref{lem::modifications},
\begin{align*}
\sum_{m=1}^{|\g_i|} C^+_{p_i+m}(x) - C^-_{p_i+m}(x)
 &= (-1)^{|\g_i|} \sum_{m=1}^{|\g_i|} 
 (-1)^m y_d(\g^{(i,m)})
 - (-1)^m y_d(\g^{(i,-m)})
 \\
 &= (-1)^{|\g_i|} \sum_{m=1}^{|\g_i|} 
 (-1)^m y_d(\g^{(i,m)})
 - \rho (-1)^{m+|\g_i|} \sgn(\g) y_d((\g^t)^{(i,|\g_i|+1-m)})
 \\
 &= (-1)^{|\g_i|} \sum_{m=1}^{|\g_i|} 
 (-1)^m y_d(\g^{(i,m)}) 
 + \rho \sgn(\g) (-1)^m y_d((\g^t)^{(i,m)}) .
\end{align*}
The summands of the sum $\sum_{\l_i}(\dots)$ in \Cref{eq::omega-sym-x-tmp} are given by \Cref{eq::pf-2}, which completes the proof.
\end{proof}

\begin{example} For $\g=\myxdots{2}{
(a1) -- node[above] {1} (a2); \myloop{a1}{0} \myloop{a2}{2} 
}\in \G_{2,3}$ with edges $\g_1,\g_2,\g_3$ from the left to the right and $\l=(2,2)\in P^+$, we compute $d=|\g|+|\l|=7$ and
\begin{align*} 
\omega(x(\g,\l)) &= y_7( \g^{(2,1)}, \l ) 
 + y_7( (\g^t)^{(2,-1)}, \l )
 + 4 y_7( \g^{(2)}, \l-2 )
\\
&= y_7\Big( \myxmoredots{3}{1}{
    (a1) to node[below left]{$0$} (b1)
    (a2) to node[above]{$0$} (a3)
    (a3) to node[right=1pt]{$0$} (b1)
    (a3) to[bend left=70] node[right=1pt]{$0$} (b1)
    ; \myloop{a1}{0} \myloop{a2}{2} 
   } , (2,2) \Big) 
 + y_7\Big(  \myxmoredots{3}{1}{
    (a1) to node[below left]{$0$} (b1)
    (a2) to node[above]{$0$} (a3)
    (a3) to node[right=1pt]{$0$} (b1)
    (a3) to[bend left=70] node[right=1pt]{$0$} (b1)
    ; \myloop{a1}{2} \myloop{a2}{0} 
   }, (2,2) \Big)
\\
 &+ 4 y_7\Big( \myxmoredots{3}{1}{
    (a1) -- node[above] {1} (a2)
    (a3) to[bend right=10] node[left=1pt]{$1$} (b1)
    (a3) to[bend left=10] node[right=1pt]{$0$} (b1)
    (a3) to[bend left=70] node[right=1pt]{$0$} (b1)
    ; \myloop{a1}{0} \myloop{a2}{2} 
    }, (2) \Big) .
\end{align*}
\end{example}

We record the following version of \Cref{prop::omega} for symmetric graphs $\g$:

\begin{corollary} \label{cor::omega-sym}
For all $\g\in\Gamma^\rho_{2,e}$ with $\g^t=\g$ and $\l\in P^+$, set $d:=|\g|+|\l|$, then
\begin{align} \label{eq::omega-sym}
 \omega(x(\g,\l))
 =  2 \sum_{\g_i:\g_{i,1}\neq\g_{i,2}}
 \sum_{m=1}^{|\g_i|} (-1)^{|\g_i|+m} y_d(\g^{(i,m)},\l) 
 + 2 \sum_{\l_j} \l_j y_d(\g^{(\l_j)},\l-\l_j) 
 .
\end{align}
\end{corollary}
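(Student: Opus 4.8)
The plan is to obtain \Cref{eq::omega-sym} as an immediate specialisation of \Cref{prop::omega} to the case $\g^t=\g$, so the argument is a substitution followed by a sign count. First I would invoke \Cref{prop::omega} for the given $\g$ and $\l$; its partition term $2\sum_{\l_j}\l_j\,y_d(\g^{(\l_j)},\l-\l_j)$ already coincides with the second summand of \Cref{eq::omega-sym}, so the whole task reduces to showing that, for each non-loop edge $\g_i$, the contribution $\sum_{m=1}^{|\g_i|}\bigl((-1)^{|\g_i|+m}y_d(\g^{(i,m)},\l)+\rho\sgn(\g)(-1)^{|\g_i|+m}y_d((\g^t)^{(i,m)},\l)\bigr)$ equals $2\sum_{m=1}^{|\g_i|}(-1)^{|\g_i|+m}y_d(\g^{(i,m)},\l)$.

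The first input is that $\rho\sgn(\g)=1$: since $\g^t=\g$ the $S_2$-orbit of $\g$ is a singleton, so $\g$ can lie in $\Gamma^\rho_{2,e}$ only by clause~(b) of \Cref{def::x-rho}, i.e.\ $\sgn[\g]=\rho$, whence $\rho\sgn(\g)=\rho^2=1$. The second input compares $y_d((\g^t)^{(i,m)},\l)$ with $y_d(\g^{(i,m)},\l)$. Because $\g^t=\g$, the subgraph obtained from $\g$ by deleting $\g_i$ is fixed under the interchange of the vertices $1$ and $2$, so $(\g^t)^{(i,m)}$ is exactly $\ol{\g^{(i,m)}}$ in the notation of \Cref{lem::modifications}; running the arc-sequence-datum argument from the proof of \Cref{lem::modifications}(b) — comparing $x(\g^{(i,m)},\l)$ with $x(\ol{\g^{(i,m)}},\l)$, identifying the relating element of $G_{4,e,d}$ as the swap of the blocks $B_1,B_2$ together with the re-standardisation of the lower strands along the arc sequences joining them, evaluating $\chi_{4,e,d}$ on it, and combining with the scalar $\rho$ by which the transposition of the first two tensor factors acts on $S^{3\rho}\L^eV$ — gives $y_d((\g^t)^{(i,m)},\l)=(-1)^{|\g_i|}\,y_d(\g^{(i,m)},\l)$. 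Substituting, the $\g_i$-contribution becomes $\bigl(1+(-1)^{|\g_i|}\bigr)\sum_m(-1)^{|\g_i|+m}y_d(\g^{(i,m)},\l)$, which is $2\sum_m(-1)^{|\g_i|+m}y_d(\g^{(i,m)},\l)$ when $|\g_i|$ is even. When $|\g_i|$ is odd this factor vanishes; one then checks that the claimed right-hand side also vanishes, by pairing $m$ with $|\g_i|+1-m$ — the same reflection/arc-sequence analysis (cf.\ \Cref{lem::modifications}(a)) shows $y_d(\g^{(i,m)},\l)=-\,y_d(\g^{(i,|\g_i|+1-m)},\l)$, so the sum cancels in pairs, the midpoint $m=(|\g_i|+1)/2$ being forced to zero in characteristic $0$. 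Collecting the edge contributions together with the partition term then yields \Cref{eq::omega-sym}.

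The main obstacle is exactly the sign count in the second paragraph: one must pin down which permutation in $G_{4,e,d}$ realises $x(\ol{\g^{(i,m)}},\l)$ from $x(\g^{(i,m)},\l)$ — in particular which arc sequences are reversed — and compute $\chi_{4,e,d}$ on it, and likewise run the parity-odd vanishing. Everything else is a mechanical substitution into \Cref{prop::omega}, in spirit the same step by which \Cref{prop::Hom-sym} follows from \Cref{cor::Hom-tensor}.
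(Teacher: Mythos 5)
Your overall strategy --- specialize \Cref{prop::omega} using $\rho\sgn(\g)=1$, which indeed follows from clause (b) of \Cref{def::x-rho} when $\g^t=\g$ --- is the right one, and the treatment of the partition term is fine. The gap is in your identification $(\g^t)^{(i,m)}=\ol{\g^{(i,m)}}$. Under the paper's conventions the construction $(\g^t)^{(i,m)}$ is performed with the standardized edge listing of $\g^t$ (non-loop edges listed as $(1,2,\ell)$, not $(2,1,\ell)$); this is forced by the graph identity $\g^{(i,-k)}=\ol{\left((\g^t)^{(i,|\g_i|+1-k)}\right)}$ in \Cref{lem::modifications}(b), which fails under your reading. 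Consequently, for $\g^t=\g$ one has $(\g^t)^{(i,m)}=\g^{(i,m)}$ on the nose, the two summands in \Cref{eq::omega} combine to $2\,y_d(\g^{(i,m)},\l)$ for \emph{every} edge, and the corollary is an immediate substitution --- no parity case distinction arises.

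Your patch for odd $|\g_i|$ is not a harmless detour: the identity $y_d(\g^{(i,m)},\l)=-\,y_d(\g^{(i,|\g_i|+1-m)},\l)$ is false. The graphs $\g^{(i,m)}$ and $\g^{(i,|\g_i|+1-m)}$ carry different label multisets on the edges incident to the vertex $4$ (namely $m-1$ versus $|\g_i|-m$, besides the $e-1$ zero labels), so they are not related by the $S_3$-action on the vertices $1,2,3$ nor by $G_{4,e,d}$; by \Cref{lem::coefficients} and \Cref{cor::Hom-tensor} their symmetrizations are linearly independent basis elements whenever nonzero. Concretely, for $e=1$, $\rho=-1$ and $\g$ the single edge of odd label $M$, one has $y_d(\g^{(1,m)},\emptyset)=y_{M-m,\emptyset,m}$, and the later lemma on the $y_{M,\l,k}$ shows these are nonzero and linearly independent for even $m$; your pairing would force, e.g., $y_{1,\emptyset,2}=0$ for $M=3$. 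In that example the corollary's right-hand side equals $-2\,y_{1,\emptyset,2}\neq0$, while your computation of the left-hand side returns $0$ --- so your argument contradicts the statement it is meant to establish, and this contradiction (rather than the midpoint/pairing cancellation) is the signal that the sign $(-1)^{|\g_i|}$ you attach to $y_d((\g^t)^{(i,m)},\l)$ is spurious.
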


\section{Classification of interpolating PBW deformations}

Again, we fix $\rho\in\{\pm1\}$ and $e\ge1$.

Recall that we have derived a formula (\Cref{eq::omega}) for the images of certain basis elements $x^\rho(\g,\l)$ under the map $\omega$ in \Cref{prop::omega}. We will now determine, which linear combinations of these images are zero, which yields a classification of the interpolating PBW deformations of type $(\Ug,\L^eV,\rho)$. 

We start by inspecting \Cref{eq::omega}, which we recall for the convenience of our readers:
\begin{align} \label{eq::recall}
\eqOmega 
\quad \tforall \g\in\G_{2,e},\l\in P^+,d:=|\g|+|\l|
, \notag   
\end{align}
where according to our \Cref{def::I-and-y},
$$
y_d(\g',\l) = (s_{d-1}\o\cyc)\circ x(\g',\l)|_{S^{3\rho}\L^e V}
\tforall \g'\in\G_{4,e},\l\in P^+
;
$$
note that $|\g'|+|\l'|=d-1$ for all combinations $(\g',\l')$ appearing in the right-hand side of \Cref{eq::recall}.

As the expression is a morphism in $\Hom(S^{3\rho}\L^3V,\Ug\o\L^eV)$ and as $S^{3\rho}\L^eV$ is a direct summand in $T^3\L^eV$, we may describe it using the basis elements found in \Cref{cor::Hom-tensor}, specifically, in terms of basis elements of the form $x(\g',\l)$, where $\g'\in\G_{4,e}$ and $\l\in P^+$ (see \Cref{rem::fourth-point}).

In the following lemmas, we will use the action of $S_3$ on $\G_{4,e}$ which permutes the first three vertices.

\begin{lemma} \label{lem::coefficients}
Consider $\g',\ti\g'\in\G_{4,e}$ and $\l,\ti\l\in P^+$, set $d:=|\ti g'|+|\ti\l|$. Then the coefficient of $x(\g',\l)$ in $y_d(\ti\g',\ti\l)$ is zero unless $|\g'|+|\l|=d$, $\l=\ti\l$ and there is a permutation $\s\in S_3$ such that $\s\cdot\g'=\ti\g'$, in which case the coefficient is $\rho^\s$.
\end{lemma}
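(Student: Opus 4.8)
The plan is to unwind $y_d(\ti\g',\ti\l) = (s_{d-1}\o\cyc)\circ x(\ti\g',\ti\l)|_{S^{3\rho}\L^e V}$ into an explicit signed sum of arc diagrams and then read off which standard diagrams $x(\g',\l)$ occur. Expanding the three ingredients: $s_{d-1} = \sum_{\theta\in S_2\wr S_{d-1}}\chi_{d-1}(\theta)\,\theta$ acts on the points belonging to the $\Ug$-part of the target; $\cyc = \sum_{i=1}^e (-1)^i\,(c_{1,i-1}\o|^{e-i})$ acts on the block of $e$ points coming from the $\L^e V$-part of the target; and restricting the source along $S^{3\rho}\L^e V = \im(\sym^{(\L^e V)}_{3\rho})$ amounts to inserting $\frac16\sum_{\s\in S_3}\rho^\s\,\s^{(e)}$ on the three blocks $B_1,B_2,B_3$ of $e$ source points (the $\alt_e$'s sitting inside each copy of $\L^e V$ being carried along). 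This presents $y_d(\ti\g',\ti\l)$ as a $\kk$-linear combination of point-permuted copies of $x(\ti\g',\ti\l)$.

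Next I would control the combinatorial type of those copies. The permutations $\theta$ (from $s_{d-1}$) and the $c_{1,i-1}\o|^{e-i}$ (from $\cyc$) lie in $G_{4,e,d-1}$ — the former permute pairs of $\Ug$-points and reflect within pairs, the latter permute points within the fourth block — so by \Cref{lem::ASD} and \Cref{lem::choices} they leave the $G_{4,e,d-1}$-orbit unchanged, hence leave unchanged the parameter $(\g',\l)$ attached to the arc diagram via \Cref{cor::Hom-tensor} and \Cref{rem::fourth-point}. The block permutation $\s^{(e)}$, by contrast, permutes the three source vertices of the associated pseudograph and leaves the partition part untouched (arc sequences contributing to $\l$ have no endpoints among the source points), sending the parameter from $(\ti\g',\ti\l)$ to $(\s^{-1}\cdot\ti\g',\ti\l)$. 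Since distinct admissible parameters give distinct standard diagrams, a summand contributes to the coefficient of $x(\g',\l)$ only if $\l=\ti\l$, $|\g'|+|\l| = |\ti\g'|+|\ti\l|$, and $\g'=\s^{-1}\cdot\ti\g'$ for some $\s\in S_3$, i.e.\ $\s\cdot\g'=\ti\g'$. This is the vanishing part of the claim.

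For the coefficient when $\s_0\cdot\g'=\ti\g'$ and $\l=\ti\l$: the $\s_0$-dependence of $y_d(\ti\g',\ti\l)$ is concentrated in the scalar $\rho^{\s_0}$ from the $S^{3\rho}$-restriction. Since $\cyc$ and $s_{d-1}$ touch only the $\L^e V$- and $\Ug$-parts of the target, they commute past the source-block permutation $\s_0^{(e)}$; writing $x(\ti\g',\ti\l) = \s_0^{(e)}\cdot x(\g',\l)$ up to a relabelling of the edge listing together with a $\sgn$-type sign that, as in \Cref{lem::sign}, one checks to be trivial for block permutations in $S_3$, and using $\s^{(\L^e V)}\circ\sym^{(\L^e V)}_{3\rho} = \rho^\s\,\sym^{(\L^e V)}_{3\rho}$, one obtains $y_d(\ti\g',\ti\l)=\rho^{\s_0}\,y_d(\g',\l)$, so the coefficient is $\rho^{\s_0}$. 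Independence of the choice of $\s_0$ is automatic: two such differ by an element of $\Stab_{S_3}(\g')$, on which $\rho^{(-)}$ is constant precisely when $y_d(\g',\l)\neq0$ — and when that fails there is nothing to prove, by the stabilizer–character analysis behind \Cref{prop::orbit-reps} and \Cref{lem::general-group-action}.

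The step I expect to be the main obstacle is the sign bookkeeping just mentioned: one must verify that re-standardising the permuted arc diagrams contributes no sign beyond $\rho^\s$. Two effects need care: the interaction of $\cyc$ with the $\alt_e$ inside the target $\L^e V$ (as in the proof of \Cref{lem::action-phi-e}), which should contribute only a $\g'$-independent constant; and the $\sgn$-type factor relating $x(\ti\g',\ti\l)$ to $\s_0^{(e)}\cdot x(\g',\l)$, the analogue of the signs in \Cref{lem::sign} and \Cref{lem::modifications}, which should vanish for the block permutations in $S_3$ under the choices fixed after \Cref{lem::choices}. Once these are settled the argument reduces to routine applications of \Cref{lem::ASD}, \Cref{lem::choices} and \Cref{lem::general-group-action}.
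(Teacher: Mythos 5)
Your proof takes essentially the same route as the paper's: the paper's own argument is a two-sentence observation that $y_d(\ti\g',\ti\l)$ is built from the orbit of $x(\ti\g',\ti\l)$ under the $G_{4,e,d-1}$-action (which fixes the associated graph and partition, by \Cref{lem::ASD}) together with the $S_3$-action permuting the three source copies of $\L^e V$ (which permutes the first three vertices and carries the character $\rho^\s$), exactly as you set it up. The sign bookkeeping you single out as the main obstacle is not addressed in the paper's proof either; in fact, by analogy with \Cref{lem::sign}, a transposition in $S_3$ should in general contribute an extra factor $(-1)^{|\g'_i|}$ for each edge $\g'_i$ joining the two transposed vertices, so your assertion that these signs are trivial is not obviously correct and the coefficient is arguably $\rho^\s$ only up to such signs --- but this discrepancy is harmless here, since the lemma is only ever invoked downstream ``up to a sign and a power of $2$''.
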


\begin{proof} The basis elements appearing in $y_d(\ti\g',\ti\l)$ are given by the orbit of $x(\ti\g',\ti\l)$ under the action of the group $G_{4,e,d}$ from \Cref{sec::parametrizing}, which leaves the associated graph and partition invariant, and the group $S_3$ permuting the three copies of $\L^e V$, which corresponds to the permutation of the first three vertices in the graph $\ti\g'$. 
\end{proof}

\begin{lemma}
Consider $\g,\ti\g\in\G^\rho_{2,e}$ and $\l,\ti\l\in P^+$. 
Assume $\ti\g_{\ti i}$ is an edge of $\ti\g$ between the vertices $1$ and $2$ and (a) $e\ge3$ or (b) $e=2$ and $|\ti\g_{\ti i}|>1$.
Then the coefficient of $\ti x:=x(\ti\g^{(\ti i,|\ti\g_{\ti i}|)},\ti\l)$ in $\omega(x(\g,\l))$ is zero unless $\ti\g\in\{\g,\g^t\}$ and $\ti\l=\l$, in which case it is $m_\g(\g_{\ti i})$ up to a sign and a power of $2$.
\end{lemma}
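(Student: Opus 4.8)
The plan is to track, via \Cref{eq::omega} from \Cref{prop::omega}, exactly which terms $y_d(\g',\l')$ can possibly contribute a copy of the fixed basis element $\ti x=x(\ti\g^{(\ti i,|\ti\g_{\ti i}|)},\ti\l)$, and then to use \Cref{lem::coefficients} to read off the coefficient. Recall that $\omega(x(\g,\l))$ is a sum over edges $\g_i$ with $\g_{i,1}\neq\g_{i,2}$ of terms $y_d(\g^{(i,m)},\l)$ and $y_d((\g^t)^{(i,m)},\l)$ for $1\le m\le|\g_i|$, together with terms $y_d(\g^{(\l_j)},\l-\l_j)$ coming from the partition. By \Cref{lem::coefficients}, the term $y_d(\g',\l')$ contributes to the coefficient of $\ti x$ only if $\l'=\ti\l$ and $\g'$ lies in the $S_3$-orbit (permuting the first three vertices) of $\ti\g^{(\ti i,|\ti\g_{\ti i}|)}$; so the first step is to enumerate which of the graphs $\g^{(i,m)}$, $(\g^t)^{(i,m)}$, $\g^{(\l_j)}$ can lie in that orbit.

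The key combinatorial observation to exploit is the special role of the exponent $m=|\g_i|$, i.e.\ the ``extreme'' modification. Looking at \Cref{def::modifications}, in $\g^{(i,|\g_i|)}$ the edge $\g_i$ between $\g_{i,1}$ and $\g_{i,2}$ is replaced by an edge $(\g_{i,1},4,|\g_i|-1)$ together with a \emph{label-$0$} edge $(\g_{i,2},3,0)$ (using $|\g_i|-k=0$ when $k=|\g_i|$) and the $e-1$ label-$0$ edges $(3,4,0)$. The plan is to argue that among all the graphs $\g^{(i,m)}$ (for $\g$ and $\g^t$), the ones with a label-$0$ edge incident to vertex $3$ connecting to one of vertices $1,2$ occur only for $m=|\g_i|$ in the positive modification and for $m=1$ in $\g^{(i,-m)}$; and the partition-derived graphs $\g^{(\l_j)}$ have \emph{both} new non-loop edges incident only to $\{3,4\}$ with the top label $\l_j-1$ even minus one, hence odd and nonzero (since $\l_j\ge2$), so they cannot match unless the matching is prevented — this is where hypotheses (a) $e\ge3$ or (b) $e=2,|\ti\g_{\ti i}|>1$ enter, to rule out small-valence coincidences and the degenerate case $|\g_i|=1$ where $\g^{(i,1)}$ degenerates. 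After this pruning, the only surviving contributions force $\ti\g^{(\ti i,|\ti\g_{\ti i}|)}$ to be $S_3$-equivalent to $\g^{(i,|\g_i|)}$ or $(\g^t)^{(i,|\g_i|)}$ for some edge $\g_i$ of $\g$, which by inspecting the incidence structure at vertices $3,4$ forces $\g\in\{\ti\g,\ti\g^t\}$, $\l=\ti\l$, and the surviving edges $\g_i$ to be precisely the $m_\g(\g_{\ti i})$ edges of $\g$ parallel to $\g_{\ti i}$ (i.e.\ sharing its vertices and label).

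Once the matching set of $(\g_i,m)$ pairs is identified as those $m_\g(\g_{\ti i})$ parallel edges with $m=|\g_{\ti i}|$, the coefficient of $\ti x$ is a sum of $m_\g(\g_{\ti i})$ equal terms, each of which by \Cref{prop::omega} carries a sign $(-1)^{|\g_i|+m}=(-1)^{2|\g_i|}=1$ (up to the factor $\rho\sgn(\g)$ attached to the $\g^t$-summand) and by \Cref{lem::coefficients} a further $\rho^\s$. Collecting, the coefficient is $m_\g(\g_{\ti i})$ times a fixed sign in $\{\pm1\}$ and possibly a factor $2$ coming from \Cref{cor::omega-sym} when $\g=\g^t$ (or more precisely from the $\#G_x$ normalization in \Cref{lem::general-group-action}), which is exactly the claimed form ``$m_\g(\g_{\ti i})$ up to a sign and a power of $2$''. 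I would present the argument by: (1) writing out $\omega(x(\g,\l))$ via \Cref{eq::omega}; (2) applying \Cref{lem::coefficients} to discard all terms with $\l'\neq\ti\l$ or wrong $S_3$-orbit; (3) the incidence-counting step isolating $m=|\g_i|$ and forcing $\g\in\{\ti\g,\ti\g^t\}$; (4) summing the surviving equal contributions.

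The main obstacle I anticipate is step (3): carefully ruling out ``accidental'' $S_3$-equivalences $\s\cdot\g^{(i,m)}=\ti\g^{(\ti i,|\ti\g_{\ti i}|)}$ for $m<|\g_i|$ or coming from the loop-edges of $\g$ or from the partition part $\g^{(\l_j)}$. The bookkeeping is delicate because $\g^{(i,m)}$ and $\g^{(\ell)}$ both introduce the same $e-1$ label-$0$ edges between vertices $3$ and $4$, so one must compare the \emph{remaining} edges incident to $3$ and $4$ and their labels; the hypotheses $e\ge3$ resp.\ $e=2,|\ti\g_{\ti i}|>1$ are exactly what is needed to prevent $\g^{(i,1)}$ from collapsing (when $|\g_i|=1$ the edge $(\g_{i,1},4,k-1)$ has label $0$, making it indistinguishable from the padding edges) and to keep the valence large enough that the ``signature'' edge $(\g_{i,1},4,|\g_i|-1)$ is recognizable. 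I would handle this by a case distinction on whether $e=2$ or $e\ge3$ and, within each, on the label $|\ti\g_{\ti i}|$, matching incidence multisets at vertices $3$ and $4$.
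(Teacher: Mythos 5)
Your plan is correct and matches the paper's proof in essentially every step: both arguments reduce via \Cref{lem::coefficients} to deciding which graphs appearing in \Cref{eq::omega} lie in the $S_3$-orbit of $\ti\g^{(\ti i,|\ti\g_{\ti i}|)}$, rule out the partition terms $\g^{(\l_j)}$ by the incidence structure at vertex $4$ (in $\ti\g^{(\ti i,|\ti\g_{\ti i}|)}$ it has two distinct neighbours among $\{1,2,3\}$, in $\g^{(\l_j)}$ only one --- this, rather than the parity of the label $\l_j-1$, is the clean discriminator), and then use hypotheses (a)/(b) to make the reconstruction of $\ti\g$ from the modified graph unambiguous, forcing $m=|\g_i|$ and $\ti\g\in\{\g,\g^t\}$, so that exactly the $m_\g(\g_{\ti i})$ parallel edges contribute equal terms. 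The only difference is cosmetic: the paper pins down the matching via a ``distinguished'' neighbour of vertex $4$ (more than one edge, or an edge of non-zero label) rather than via the label-$0$ edge at vertex $3$, and it does not need your aside about the morphisms $\g^{(i,-m)}$, which have already been eliminated from \Cref{eq::omega} by \Cref{lem::modifications}.
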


\begin{proof} 
Set $\xi:=\s\cdot\ti\g^{(\ti i,|\ti\g_{\ti i}|)}$ for any $\s\in S_3$. 

Due to our assumption (a) or (b), $\ti\g^{(\ti i,|\ti g_{\ti i}|)}$ is a graph with edges between vertex number $4$ and two other vertices $1\le v_0,v_1\le 3$. Hence, this is also the case for $\xi$. However, this is not the case for any graph of the form $\g^{(\ell)}$. So by \Cref{lem::coefficients}, the coefficient of $\ti x$ in the first sum in \Cref{eq::recall} is zero.

Furthermore, due to our assumption (a) or (b), one of the vertices $v_0$, $v_1$ is distinguished in that it has more than one edge or an edge with non-zero label connecting it to the vertex $4$. Let us assume this is the vertex $v_0$, and let $v_2$ be the remaining vertex which is not in $\{4,v_0,v_1\}$. Let $M$ be the sum of the labels of all edges in $\xi$ which are not between $v_1$ and $v_2$. Now $\xi$ can be of the form $\g^{(i,m)}$ only if $\g$ or $\g^t$ is the restriction of the graph $\xi$ to the vertices $v_1,v_2$, plus one edge between its vertices of label $M$. This graph is exactly $\ti\g$. Similarly, $\xi$ can be of the form $(\g^t)^{(i,m)}$ only if $\ti\g\in\{\g,\g^t\}$.

So the coefficient in question is zero unless $\ti\g=\{\g,\g^t\}$, but then \Cref{eq::recall} and \Cref{lem::coefficients} enforce $\l=\ti\l$, as well, and \Cref{eq::recall} then immediately implies that the coefficient has the desired form. 
\end{proof}

\begin{lemma}
Consider $\g,\ti\g\in\G^\rho_{2,e}$ and $\l,\ti\l\in P^+$.
Assume $\ti\l_{\ti j}$ is a (non-zero) part of $\ti\l$ and $e\ge2$.
Then the coefficient of $x(\ti\g^{(\ti\l_{\ti j})},\ti\l-\ti\l_{\ti j})$ in $\omega(x(\g,\l))$ is zero unless $\ti\g\in\{\g,\g^t\}$ and $\ti\l=\l$, in which case it is $m_\l(\l_{\ti j})$ up to a sign and a power of $2$.
\end{lemma}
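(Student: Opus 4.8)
The plan is to read the assertion off the explicit formula \Cref{eq::omega} (restated as \Cref{eq::recall}), which expresses $\omega(x(\g,\l))$ as the sum of a \emph{first sum} $\sum_{\g_i:\g_{i,1}\neq\g_{i,2}}\sum_{m=1}^{|\g_i|}(-1)^{|\g_i|+m}\bigl(y_d(\g^{(i,m)},\l)+\rho\sgn(\g)\,y_d((\g^t)^{(i,m)},\l)\bigr)$ and a \emph{second sum} $2\sum_{\l_j}\l_j\,y_d(\g^{(\l_j)},\l-\l_j)$. By \Cref{lem::coefficients}, in any summand $y_d(\g',\l')$ a basis vector $x(\g'',\l'')$ can occur only when $\l''=\l'$ and $\g''$ lies in the orbit of $\g'$ under the $S_3$-action permuting the first three vertices (the fourth vertex being fixed). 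So everything reduces to deciding for which terms on the right of \Cref{eq::recall} the pair consisting of $\ti\g^{(\ti\l_{\ti j})}$ modulo that $S_3$-action and the partition $\ti\l-\ti\l_{\ti j}$ can occur, and then reading off the corresponding coefficient.

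First I would dispose of the first sum, and this is where the hypothesis $e\ge2$ enters: in $\g^{(i,m)}$ and in $(\g^t)^{(i,m)}$ vertex $4$ is joined to one vertex of $\{1,2\}$ by a single edge \emph{and} to vertex $3$ by the $e-1\ge1$ edges of label $0$, so vertex $4$ has exactly two distinct neighbours, whereas in $\ti\g^{(\ti\l_{\ti j})}$ vertex $4$ is joined only to vertex $3$ (by all $e$ of its edges) and so has a single neighbour. The number of distinct neighbours of vertex $4$ is invariant under the $S_3$-action, so no $S_3$-translate of $\ti\g^{(\ti\l_{\ti j})}$ equals a graph $\g^{(i,m)}$ or $(\g^t)^{(i,m)}$, and the first sum contributes nothing irrespective of the partitions. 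For the second sum, a term $y_d(\g^{(\l_j)},\l-\l_j)$ can contribute only if $\ti\l-\ti\l_{\ti j}=\l-\l_j$ and some $\s\in S_3$ carries $\ti\g^{(\ti\l_{\ti j})}$ to $\g^{(\l_j)}$; comparing vertex $4$, whose unique neighbour is vertex $3$ in $\g^{(\l_j)}$ and is $\s(3)$ in $\s\cdot\ti\g^{(\ti\l_{\ti j})}$, forces $\s(3)=3$, hence $\s\in\{\id,(12)\}$; matching the remaining edges then gives $\ti\g=\g$ (for $\s=\id$) or $\ti\g=\g^t$ (for $\s=(12)$), and matching the label of the nontrivial $(3,4)$-edge gives $\ti\l_{\ti j}=\l_j$, so together with $\ti\l-\ti\l_{\ti j}=\l-\l_j$ we get $\ti\l=\l$. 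Hence the coefficient vanishes unless $\ti\g\in\{\g,\g^t\}$ and $\ti\l=\l$. In that remaining case the terms $y_d(\g^{(\l_j)},\l-\l_j)$ with $\l_j=\ti\l_{\ti j}$ are all the very same morphism $y_d(\g^{(\ti\l_{\ti j})},\l-\ti\l_{\ti j})$, and there are $m_\l(\ti\l_{\ti j})$ of them, so the sought coefficient equals $2\,\ti\l_{\ti j}\,m_\l(\ti\l_{\ti j})$ times the coefficient of $x(\ti\g^{(\ti\l_{\ti j})},\l-\ti\l_{\ti j})$ in $y_d(\g^{(\ti\l_{\ti j})},\l-\ti\l_{\ti j})$, which by \Cref{lem::coefficients} is $\pm1$ up to the size of a stabilizer subgroup of $S_3$, i.e.\ up to a power of $2$; since $\ti\l_{\ti j}$ is even this is $m_\l(\ti\l_{\ti j})$ up to a sign and a power of $2$.

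The one step that needs care is the two graph-matching arguments in $\Gamma_{4,e}$: I would want to be scrupulous that identifying the ``stars'' at vertex $4$ of two such graphs, together with $e\ge2$, genuinely forces the stated constraints on $\s$, on $\ti\g$, and on $\ti\l$, and that the auxiliary $0$-labelled edges $(3,4,0)$ produce no spurious coincidence (for $e=1$ they are absent and the argument indeed breaks down, consistent with the hypothesis). Everything else is routine bookkeeping driven by \Cref{eq::omega}, \Cref{lem::coefficients}, and \Cref{def::modifications}.
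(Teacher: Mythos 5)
Your proposal is correct and follows essentially the same route as the paper's proof: both arguments compare the neighbourhood of vertex $4$ (which is joined to exactly one other vertex in any $S_3$-translate of $\ti\g^{(\ti\l_{\ti j})}$, but to two distinct vertices in every $\g^{(i,m)}$ or $(\g^t)^{(i,m)}$ when $e\ge2$) to kill the first sum, then match the remaining graph and the label of the nontrivial $(3,4)$-edge to force $\ti\g\in\{\g,\g^t\}$ and $\ti\l=\l$, and finally read the coefficient off \Cref{eq::recall} via \Cref{lem::coefficients}. The extra details you supply (the $S_3$-invariance of the neighbour count and the explicit constraint $\s(3)=3$) are consistent with, and slightly more explicit than, the paper's wording.
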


\begin{proof} For any $\s\in S_3$, $\xi:=\s\cdot\ti\g^{(\ti\l_{\ti j})}$ is a graph with edges between vertex $4$ and exactly one other vertex $v_0$, which is not the case for any graph of the form $\g^{(i,k)}$ or $(\g^t)^{(i,k)}$. Now $\xi$ can be of the form $\g^{(\l_j)}$ only if $\g$ is the graph with two vertices with the same labelled edges as the ones among $v_1, v_2$ in $\xi$, so $\ti\g\in\{\g,\g^t\}$, and if $\l_j$ is the unique non-zero edge label between the vertices $4$ and $v_0$ in $\xi$, which is $\ti\l_{\ti j}$. So by \Cref{lem::coefficients}, the coefficient in question is zero unless $\g=\ti\g$ and $\l=\ti\l$, but then \Cref{eq::recall} immediately implies that the coefficient has the desired form. 
\end{proof}

Recall from \Cref{def::x-rho} that
$$
x^\rho(\g,\l) = x(\g,\l) + \rho \sgn(\g) x(\g^t,\l) .
$$

\begin{corollary} \label{cor::data-solution}
$J(\k)=0$ implies that $\k$ is a linear combination of $x^\rho(\g,\l)$ for $\g\in\G^\rho_{2,e}, \l\in P^+$ such that one of the following conditions is met:

(a) $e=1$

(b) $e=2$, all edges in $\g$ between $1$ and $2$ have label at most $1$, and $\l=\emptyset$

(c) $e\ge3$, $\g$ has no edges between $1$ and $2$, and $\l=\emptyset$
\end{corollary}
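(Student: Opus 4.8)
The plan is to expand $\k$ in the basis of \Cref{prop::Hom-sym}, feed it through the formula for $\omega$ of \Cref{prop::omega}, and then read off coefficients against a basis of the target hom-space. By \Cref{prop::omega_d}, $J(\k)=0$ is equivalent to $\omega(\k'_d)=0$ for every $d\ge1$, where $\k=\sum_d s_d\circ\k'_d$; and by \Cref{prop::Hom-sym} we may write $\k'_d=\sum_{\g\in\G^\rho_{2,e},\,\l\in P^+,\,|\g|+|\l|=d}c_{\g,\l}\,x^\rho(\g,\l)$, so that $\omega(\k'_d)=\sum c_{\g,\l}\,\omega(x^\rho(\g,\l))$ by linearity of $\omega$. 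By \Cref{rem::fourth-point} together with \Cref{cor::Hom-tensor}, the morphisms $x(\g',\l')$ with $\g'\in\G_{4,e}$, $\l'\in P^+$ form a basis of $\Hom(S^{3\rho}\L^eV,\Ug\o\L^eV)$. The strategy is then: for each pair $(\g,\l)$ failing all of (a)--(c), I would exhibit a single target basis vector whose coefficient in $\omega(\k'_d)$ (with $d=|\g|+|\l|$) is a nonzero scalar times $c_{\g,\l}$, which together with $\omega(\k'_d)=0$ forces $c_{\g,\l}=0$.

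For $e=1$ every pair satisfies (a), so there is nothing to prove; assume $e\ge2$ and suppose $c_{\g,\l}\ne0$ for a pair failing (b) if $e=2$, resp.\ (c) if $e\ge3$. Then either $\l\ne\emptyset$, or $\g$ carries an edge $\g_i$ between the vertices $1$ and $2$ of positive label (and, when $e=2$, of label at least $2$). In the first case I would use the witness $x(\g^{(\l_j)},\l-\l_j)$ for a part $\l_j$ of $\l$, invoking the last of the three lemmas above (the one concerning a part $\ti\l_{\ti j}$): the coefficient of this witness in $\omega(x^\rho(\g',\l'))$ vanishes unless $\l'=\l$ and $\g'$ is isomorphic to $\g$, so — since $\G^\rho_{2,e}$ is a transversal for the $S_2$-action on isomorphism classes — the only surviving summand of $\sum c_{\g',\l'}\omega(x^\rho(\g',\l'))$ is the one with $(\g',\l')=(\g,\l)$, whose contribution is $\pm 2^{a}\,m_\l(\l_j)\,c_{\g,\l}\ne0$. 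In the second case the same argument runs with the witness $x(\g^{(i,|\g_i|)},\l)$ and the middle of the three lemmas above (the one concerning an edge $\ti\g_{\ti i}$ between $1$ and $2$), whose hypothesis (a)/(b) is guaranteed by our standing assumption on $e$ and $|\g_i|$, yielding $\pm 2^{a}\,m_\g(\g_i)\,c_{\g,\l}\ne0$. Either way $\omega(\k'_d)\ne0$, a contradiction; hence every such $c_{\g,\l}$ vanishes and $\k$ lies in the span described by (a)--(c).

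The step requiring real care is the nonvanishing of the witness coefficient inside $\omega(x^\rho(\g,\l))$ itself: since $x^\rho(\g,\l)=x(\g,\l)+\rho\sgn(\g)\,x(\g^t,\l)$ (see \Cref{def::x-rho}), both summands feed into that coefficient, and one must check they reinforce rather than cancel. When $\g^t=\g$ this is automatic, since then $\sgn(\g)=\rho$, so $x^\rho(\g,\l)=2x(\g,\l)$ and \Cref{cor::omega-sym} gives $\omega(x^\rho(\g,\l))=2\,\omega(x(\g,\l))$. When $\g^t\ne\g$ it rests on identities already in place: $\sgn(\g^t)=\sgn(\g)$ and $m_{\g^t}(\cdot)=m_\g(\cdot)$; \Cref{lem::modifications}, which matches $\g^{(i,-m)}$ with $\overline{(\g^t)^{(i,|\g_i|+1-m)}}$ and identifies the relevant $y_d$'s up to the sign $\rho\sgn(\g)(-1)^{|\g_i|}$ (exactly the combination already built into \Cref{prop::omega}); and \Cref{lem::coefficients}, by which the $S_3$-action on the first three vertices carries $\g^{(i,|\g_i|)}$ to $(\g^t)^{(i,|\g_i|)}$, so that the two contributions land on the same basis vector with matching sign. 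I expect this non-cancellation bookkeeping — rather than the orthogonality of the witnesses, which is immediate from the three lemmas and the transversality of $\G^\rho_{2,e}$ — to be the main obstacle.
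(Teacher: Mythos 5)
Your overall strategy is exactly the paper's: the published proof of this corollary is a single sentence deferring to the two witness lemmas that precede it, and you have correctly reconstructed the coefficient-extraction argument those lemmas are meant to support — expanding $\k'_d$ in the basis of \Cref{prop::Hom-sym}, isolating one surviving summand via the transversality of $\G^\rho_{2,e}$, and checking that the $x(\g,\l)$ and $x(\g^t,\l)$ contributions inside $x^\rho(\g,\l)$ reinforce rather than cancel. That last point is left implicit in the paper (its lemmas bound the coefficient only in $\omega(x(\g,\l))$, not in $\omega(x^\rho(\g,\l))$), so your attention to it is warranted rather than excessive.

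There is, however, one concrete hole in your case analysis for $e\ge3$. Failing condition (c) means ``$\l\neq\emptyset$ or $\g$ has an edge between $1$ and $2$''; you silently upgrade this to ``\dots an edge between $1$ and $2$ \emph{of positive label}''. A graph can fail (c) solely through label-$0$ edges between the two vertices — for instance, for $e=3$, one label-$0$ edge plus an odd-labelled loop at each vertex, or, in degree $0$, the graph consisting of $e$ label-$0$ edges. For such an edge no witness $x(\g^{(i,|\g_i|)},\l)$ exists, since $\g^{(i,k)}$ requires $1\le k\le|\g_i|$, and indeed $\omega$ never sees the edge at all: a label-$0$ edge contributes no lower points, and the operators $C^{\pm}_{k,d}$ act only on pairs of lower points, so the inner sum over $m$ in \Cref{eq::omega} is empty. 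Consequently your argument cannot force the corresponding coefficients to vanish. This blind spot is inherited from the paper — the hypothesis of its edge-witness lemma likewise tacitly presumes $|\ti\g_{\ti i}|\ge1$, and the degree-$0$ component of $\k$ is explicitly unconstrained by the remark after \Cref{lem::Jk-1} — but as written your proof does not establish (c) for these graphs; you would need either a separate argument excluding them or a weakening of the statement to permit label-$0$ edges between the vertices $1$ and $2$.
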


\begin{proof} The lemmas show that the image under $\omega$ of linear combinations involving  $x^\rho(\g,\l)$ beyond what is described in (a) -- (c) are non-zero.
\end{proof}

\begin{definition} Let $\J^\rho_e$ the space of all interpolating PBW deformations of type $(\Ug,\L^eV,\rho)$.
\end{definition}

For convenience, we will also write $\J^\pm$ for $\J^{\pm1}$.

We will see that \Cref{cor::data-solution} yields bases for the spaces $\J^\pm_e$ for $e>1$ immediately.

\begin{definition} \label{def::form-field} \label{def::gamma-mu-nu}
We set 
$$
\g^\form := \myxdots{2}{(a1) to[bend left] node[above]{$0$} (a2) (a1) to[bend right] node[below]{$0$} (a2)} \in\G^+_{2,2}, \quad
\g^\Lie := \myxdots{2}{(a1) to[bend left] node[above]{$1$} (a2) (a1) to[bend right] node[below]{$0$} (a2)}  \in\G^-_{2,0},
$$
$$
\k^\form:=x^+(\g^\form,\emptyset),\quad
\k^\Lie:=x^-(\g^\Lie,\emptyset) ,
$$
$$
\quad Q^\rho_{\ell} := \{(\mu,\nu)\in P^2: \#\mu=\#\nu=\ell  , \mu<\nu \text{ or } (\mu=\nu \tand \rho=1) \} 
\quad\tforall \ell\ge1, \rho\in\{\pm1\},
$$
and for all $(\mu,\nu)\in Q^\rho_\ell$, let $\g^{(\mu,\nu)}\in\G_{2,2\ell}$ be the graph with loops with labels $\mu_1,\dots,\mu_{\#\mu}$ and $\nu_1,\dots,\nu_{\#\nu}$ at its two vertices.
\end{definition}

\begin{proposition} 
$\J^+_2$ has a basis consisting of $\k^\form$ and $x^+(\g^{(\mu,\nu)},\emptyset)$ for $(\mu,\nu)\in Q_1^+$.

$\J^-_2$ has a basis consisting of $\k^\Lie$ and $x^-(\g^{(\mu,\nu)},\emptyset)$ for $(\mu,\nu)\in Q_1^-$.

For all $e\ge 3$, $\J^+_e=0$ if $e$ is odd, and $\J^\pm_e$ has a basis consisting of $x^\pm(\g^{(\mu,\nu)},\emptyset)$ for $(\mu,\nu)\in Q^\pm_{e/2}$ if $e$ is even.
\end{proposition}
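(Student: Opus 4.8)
The plan is to combine the structural reduction of \Cref{cor::data-solution} with a direct evaluation of $\omega$ on the candidates it leaves behind. By \Cref{prop::omega_d} we have $\J^\rho_e=\bigoplus_{d\ge0}\ker\bigl(\omega_d|_{\Hom(S^{2\rho}\L^eV,\,S^d\L^2V)}\bigr)$ with no condition imposed in degree $0$, and by \Cref{prop::Hom-sym} the elements $s_d\circ x^\rho(\g,\l)|_{S^{2\rho}\L^eV}$ with $\g\in\G^\rho_{2,e}$, $\l\in P^+$, $|\g|+|\l|=d$ form a basis of the $d$-th summand. \Cref{cor::data-solution} confines $\J^\rho_e$ to the span of those basis elements of type (a)--(c); for $e\ge2$ this is type (b) when $e=2$ and type (c) when $e\ge3$. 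So in each degree it remains to decide which of the finitely many remaining candidates lie in $\ker\omega_d$, using the expansion of \Cref{prop::omega}/\Cref{cor::omega-sym} for $\omega(x^\rho(\g,\l))$ in the $y_d(\g',\l)$ together with the following vanishing criterion, which is \Cref{lem::coefficients} read through the cancellation mechanism of \Cref{lem::general-group-action}: $y_d(\g',\l)|_{S^{3\rho}\L^eV}$ vanishes exactly when $\rho^\s=-1$ for some $\s$ in the $S_3$-stabilizer of $\g'$ (equivalently, $\rho=-1$ and that stabilizer contains a transposition of the first three vertices), and is a nonzero combination of the basis vectors of \Cref{cor::Hom-tensor} otherwise.

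\emph{The case $e\ge3$.} Here only type (c) occurs: $\g$ is a pseudograph on two vertices with loops only, and $\l=\emptyset$. Then both sums in \Cref{eq::omega} are empty, so $\omega(x^\rho(\g,\emptyset))=0$ automatically; hence every type-(c) candidate lies in $\J^\rho_e$, and $\J^\rho_e$ is exactly their (linearly independent) span. A loop-only $e$-valent graph on two vertices has $e/2$ loops at each vertex, with odd labels since it lies in $\Gamma_{2,e}$; so there are none when $e$ is odd (whence $\J^\rho_e=0$, in particular $\J^+_e=0$), while for $e=2\ell$ the isomorphism classes, with representatives chosen as in \Cref{def::x-rho}, are exactly the $\g^{(\mu,\nu)}$, $(\mu,\nu)\in Q^\rho_\ell$. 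This proves the statement for $e\ge3$.

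\emph{The case $e=2$.} Now we are in type (b): $\l=\emptyset$ and each edge between the two vertices has label $\le 1$. A $2$-valent graph on two vertices is either a pair of loops (odd labels $a,b$) or a double edge with label multiset $\{c,c'\}\subseteq\{0,1\}$. Pairs of loops again satisfy $\omega=0$, lie in $\J^\rho_2$ for all $\rho$, and produce exactly the $\g^{(\mu,\nu)}$, $(\mu,\nu)\in Q^\rho_1$. For the double edges: the edge $\{0,0\}$ is $\g^\form$, has $\sgn\g^\form=1$, so lies only in $\G^+_{2,2}$, and sits in degree $0$, which is unconstrained; thus $\k^\form$ spans $\Hom(S^2\L^2V,\one)\subseteq\J^+_2$, while $\Hom(S^{-2}\L^2V,\one)=0$, so there is no $\rho=-1$ analogue. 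The edge $\{0,1\}$ is $\g^\Lie$, has $\sgn\g^\Lie=-1$, so lies only in $\G^-_{2,2}$; by \Cref{cor::omega-sym}, $\omega(x^-(\g^\Lie,\emptyset))$ is a nonzero multiple of $y_1(\g^{(2,1)},\emptyset)$, and unwinding \Cref{def::modifications} identifies $\g^{(2,1)}$ (for $\g_i$ the label-$1$ edge) as the $4$-cycle on vertices $1,2,3,4$ with all labels $0$, which is invariant under the transposition $(1,3)$; the vanishing criterion then gives $y_1(\g^{(2,1)},\emptyset)|_{S^{-3}\L^2V}=0$, so $\k^\Lie\in\J^-_2$. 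The edge $\{1,1\}$ has sign $+1$, so lies only in $\G^+_{2,2}$; here $\omega$ is a nonzero multiple of $y_2(\g^{(1,1)},\emptyset)$, and $\g^{(1,1)}$ is the $4$-cycle carrying one label-$1$ edge, whose $S_3$-stabilizer is trivial, so $\omega\ne0$ and this graph contributes nothing to $\J^+_2$; as it is the only candidate of nonzero $\omega$ in its degree, the kernel there is spanned by the pair-of-loops graph $\g^{((1),(1))}$. Assembling over all degrees and invoking linear independence from \Cref{prop::Hom-sym} gives the stated bases of $\J^\pm_2$.

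The main obstacle is this double-edge analysis for $e=2$: one must carefully unwind \Cref{def::modifications} to recognize the modified graphs $\g^{(i,k)}$ of $\g^\Lie$ and of the $\{1,1\}$-edge as small cycles, and then compute their $S_3$-stabilizers, because it is precisely the presence of an odd automorphism (for $\g^\Lie$ when $\rho=-1$) versus its absence (for the $\{1,1\}$-edge when $\rho=+1$) that decides whether such a low-degree candidate survives. Everything else is bookkeeping built on \Cref{cor::data-solution} and the already-established formula for $\omega$.
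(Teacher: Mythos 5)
Your proof is correct and follows essentially the same route as the paper's: reduce to the candidates of \Cref{cor::data-solution}, then evaluate $\omega$ on them degree by degree using \Cref{prop::omega}/\Cref{cor::omega-sym}. You are in fact more complete than the paper's own proof in one respect: the paper only checks that the \emph{asserted} basis elements solve the Jacobi identity ($\k^\form$ and the loop-only graphs trivially, $\k^\Lie$ via the $(1,3)$-symmetry of the all-label-zero $4$-cycle), whereas \Cref{cor::data-solution} also leaves the $e=2$ double edge with both labels equal to $1$ as a surviving candidate for $\rho=+1$. Your computation that its image under $\omega$ is a nonzero multiple of $y_2$ of a $4$-cycle with trivial $S_3$-stabilizer, and that no other degree-$2$ candidate can cancel it, is precisely the verification needed to exclude it; the paper's proof passes over this. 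One small caution: your general vanishing criterion for $y_d(\g',\l)$ (``vanishes iff $\rho^\s=-1$ for some $\s$ stabilizing $\g'$'') ignores the additional $\sgn$-type signs that arise when a stabilizing permutation reverses an arc sequence carrying an odd label (cf.\ \Cref{lem::sign} and \Cref{lem::modifications}); this does not affect either graph to which you apply it (all labels zero in the first, trivial stabilizer in the second), but as a blanket statement it is not quite right.
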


\begin{proof} By \Cref{cor::data-solution}, the spaces of PBW deformations are spanned by linear combinations of the described deformation maps, where we have defined the sets $Q^\rho_\ell$ such that their symmetrizations via $x^+$ or $x^-$ are distinct and non-zero. 
So it suffices to see that the asserted basis elements all solve \Cref{eq::recall}. This is true for $x^{\pm}(\g^{(\mu,\nu)},\emptyset)$ and for $\k^\form$, as the sums on the right-hand side of \Cref{eq::recall} are empty. Finally, for $\k^\Lie$, the right-hand side of \Cref{eq::recall} is a multiple of 
$$
y_0(\myxmoredots{3}{1}{ 
(a1) -- node[above] {0} (a2)
(a2) -- node[above] {0} (a3)
(a1) -- node[below left] {0} (b1)
(a3) -- node[below right] {0} (b1)
},\emptyset) .
$$
However, this expression is zero, as reversing the order of the three upper vertices is a symmetry of the graph, and the expression is antisymmetric with respect to any transposition of upper vertices. 
\end{proof}

\bigskip

To complete the classification, we analyze the case $e=1$. For all $M\ge0, \l\in P^+, k\ge0$, we set
$$
x^\pm_{M,\l} := x^\pm(~\myxdots{2}{(a1) -- node[above]{$M$} (a2)}~, \l)
\quad\tand\quad
y_{M,\l,k}
:= y_{M+|\l|+k}\left(\myxmoredots{3}{1}{(a1) -- node[above]{$M$} (a2) (a3) -- node[below right]{$k-1$} (b1)}, \l\right)
.
$$

\begin{lemma}
(a) The set $\{s_{M+|\l|}\circ x^\rho_{M,\l}: M\ge0, \l\in P^+,(-1)^M=\rho\}$ is a basis of $\Hom(S^{2\rho} V,\Ug)$.

(b) $y_{M,\l,k}=0$ if $(-1)^M\neq\rho$.

(c) The set $\{y_{M,\l,k}: M\ge0,\l\in P^+,k\ge0,(-1)^M=\rho\}$ is linearly independent.
\end{lemma}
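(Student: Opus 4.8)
The three parts are essentially a bookkeeping exercise in the graphical calculus of \Cref{sec::parametrizing} specialized to $p=2$, $e=1$, so the plan is to reduce each claim to \Cref{cor::Hom-tensor}, \Cref{prop::Hom-sym}, and \Cref{lem::coefficients}. For part (a), note that for $e=1$ the group $\G_{2,1}$ of $1$-valent pseudographs on two vertices consists of exactly one graph for each edge label $M\ge0$, namely the single edge between the two vertices with label $M$; there are no loops, since a loop would make the graph $2$-valent at one vertex. Hence the parametrizing set $\G^\rho_{2,1}\times P^+$ of \Cref{prop::Hom-sym} is exactly $\{(\text{edge of label }M,\l): M\ge 0,\ \l\in P^+\}$, subject to the sign condition $\sgn[\g]=\rho$; and $\sgn$ of the single-edge graph with label $M$ is $(-1)^M$ by \Cref{def::x-rho}'s formula for $\sgn\g$ (the product over edges between $1$ and $2$ has a single factor $(-1)^M$). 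Thus the condition $\sgn[\g]=\rho$ becomes $(-1)^M=\rho$. Applying \Cref{prop::Hom-sym} with $d=M+|\l|$ then gives precisely that $\{s_{M+|\l|}\circ x^\rho_{M,\l}: M\ge 0,\ \l\in P^+,\ (-1)^M=\rho\}$ is a basis of $\Hom(S^{2\rho}V,\Ug)=\bigoplus_d\Hom(S^{2\rho}V,S^d\gf)$, which is part (a).

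For part (b), I would trace through the construction of $y_{M,\l,k}=I_{M+|\l|+k}(x(\g',\l))$ where $\g'$ is the displayed graph with an $M$-labelled edge between vertices $1,2$ and a $(k-1)$-labelled edge between vertices $3,4$. Since $I_d(y)=(s_{d-1}\o\cyc)\circ y|_{S^{3\rho}\L^1V}$, the vanishing when $(-1)^M\neq\rho$ must come from a symmetry of the arc diagram $x(\g',\l)$ that is incompatible with the character $\chi$ on $S^{3\rho}W$. Concretely, when $e=1$ the arc sequence running through the $M$-edge between $B_1$ and $B_2$ has a start and end point that are single upper points, and reversing the order of the lower points along this arc sequence is a symmetry of the diagram (up to relabeling) whose effect on the restriction to $S^{\pm 2}V$ (via precomposition with the appropriate $\tau$) contributes a sign $\rho$, while the reversal itself contributes $(-1)^M$ by the same pair-reversal count used throughout \Cref{sec::parametrizing} (it reverses $M$ pairs of adjacent lower points; compare \Cref{lem::sign} and \Cref{lem::modifications}(b)). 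Hence $y_{M,\l,k}=\rho(-1)^M\, y_{M,\l,k}$, forcing $y_{M,\l,k}=0$ unless $(-1)^M=\rho$. I expect this to be the part requiring the most care, since one must identify the right symmetry and correctly account for the interaction between the $\tau^{(e)}$-precomposition (restricting to $S^{\pm2}W$) and the symmetrizer $s_{d-1}\o\cyc$; the cleanest route is probably to invoke \Cref{lem::modifications}(b) directly with the appropriate edge playing the role of $\g_i$, rather than recomputing the arc-sequence bookkeeping from scratch.

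For part (c), I would use \Cref{lem::coefficients}: the nonzero $y_{M,\l,k}$ are, up to scalar, symmetrizations of the single arc diagrams $x(\g'_{M,\l,k},\l)$ with $\g'_{M,\l,k}\in\G_{4,1}$ the graph with an $M$-edge between $1,2$ and a $(k-1)$-edge between $3,4$. Two such $y$'s can share a basis vector $x(\g',\l)$ in their expansions only if the graphs $\g'_{M,\l,k}$ and $\g'_{M',\l',k'}$ lie in the same $S_3$-orbit (permuting the first three vertices) and $\l=\l'$. But vertex $4$ is fixed by $S_3$, so the $(k-1)$-labelled edge incident to vertex $4$ is an $S_3$-invariant: its other endpoint is always the unique $S_3$-orbit representative vertex not joined to $1,2$ by the $M$-edge, and its label $k-1$ is preserved. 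Likewise the $M$-edge, being the unique other edge, has its label $M$ preserved. Hence $\g'_{M,\l,k}$ and $\g'_{M',\l',k'}$ are in the same $S_3$-orbit only if $M=M'$ and $k=k'$; combined with $\l=\l'$ this shows distinct triples $(M,\l,k)$ yield $y$'s supported on disjoint sets of basis vectors of $\Hom(T^3\L^1V,\Ug\o\L^1V)$ (using \Cref{cor::Hom-tensor} and \Cref{rem::fourth-point} for the relevant basis). Since each such $y_{M,\l,k}$ with $(-1)^M=\rho$ is nonzero by part (b) and its negation, linear independence follows. The only subtlety here is checking that none of the relevant symmetrizations degenerate to zero via an internal symmetry of the kind seen in the examples after \Cref{cor::Hom-tensor}; this is ruled out because the restriction is to $S^{3\rho}W$ rather than $S^{\pm 2}W$ of a single block, and the $(k-1)$-edge at vertex $4$ breaks any would-be reflection symmetry when $k\ge 1$, with the $k=0$ case handled by the non-degeneracy already built into $x^\rho$.
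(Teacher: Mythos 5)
Your proposal is correct and follows essentially the same route as the paper: (a) is \Cref{prop::Hom-sym} specialized to $e=1$ with the observation that $\sgn$ of the single $M$-labelled edge is $(-1)^M$; (b) uses the same symmetry (swap the two upper points of the $M$-arc-sequence and reverse its $2M$ lower points) to get $y_{M,\l,k}=\rho(-1)^M y_{M,\l,k}$; and (c) combines non-vanishing of the coefficient of $x(\g'_{M,k},\l)$ with the disjointness of the supports, which you justify via \Cref{lem::coefficients}. The only cosmetic slip is that in (b) the source object is $S^{3\rho}V$ rather than $S^{\pm2}V$, but the sign $\rho$ contributed by a transposition is the same either way.
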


\begin{proof}
(a) is \Cref{prop::Hom-sym} for $e=1$.

To show (b), we set
$$
\g'_{M,k}:=\myxmoredots{3}{1}{(a1) -- node[above]{$M$} (a2) (a3) -- node[below right]{$k-1$} (b1)} .
$$
The arc diagram $x(\g'_{M,k},\l)$ has an arc sequence with two upper points and $2M$ lower points, so it is invariant under permuting the two upper points and reversing the order of all lower points. Hence, viewed as a morphism between the corresponding combination of symmetric and exterior powers, it is identical to itself times a factor of $\rho(-1)^M$, so it is zero, if $\rho\neq(-1)^M$.

Note that by the same argument, the coefficient of $x(\g'_{M,k}, \l)$ in the symmetrization $y_{M,\l,k}$ is non-zero if $(-1)^M=\rho$, so as the elements $\{x(\g'_{M,k},\l):M\ge 0,\l\in P^+,(-1)^M=\rho\}$ are linearly independent, so is the asserted set.
\end{proof}

\begin{lemma}
For all $M\ge0$, $\l\in P^+$ with $(-1)^M=\rho$,
\begin{align} \label{eq::omega-sym-e-1}
\omega^\rho(x^\rho_{M,\l})=
2\rho \sum_{1\le m\le M \text{ even}} 
y_{M-m,\l,m}
+2\sum_{\l_j} \l_j y_{M,\l-\l_j,\l_j}
.
\end{align}
\end{lemma}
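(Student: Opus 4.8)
The plan is to specialise the formula of \Cref{cor::omega-sym} to the pseudograph underlying $x^\rho_{M,\l}$. By definition, $x^\rho_{M,\l}=x^\rho(\g_M,\l)$ where $\g_M$ is the pseudograph with two vertices joined by a single edge of label $M$; it satisfies $\g_M=\g_M^t$ and $\sgn\g_M=(-1)^M=\rho$ by hypothesis, so $\g_M\in\Gamma^\rho_{2,1}$ and \Cref{cor::omega-sym} applies. Writing $\g_1$ for the unique (non-loop) edge of $\g_M$, with $|\g_1|=M$ and $d:=|\g_M|+|\l|=M+|\l|$, that corollary gives
$$
\omega(x(\g_M,\l)) = 2\sum_{m=1}^M (-1)^{M+m}\, y_d(\g_M^{(1,m)},\l) + 2\sum_{\l_j}\l_j\, y_d(\g_M^{(\l_j)},\l-\l_j).
$$

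First I would unwind the modification constructions of \Cref{def::modifications} in the case $e=1$. A direct inspection shows that $\g_M^{(\l_j)}$ is the $4$-vertex pseudograph with edges $(1,2,M)$ and $(3,4,\l_j-1)$, which is exactly the graph occurring in $y_{M,\l-\l_j,\l_j}$; hence $y_d(\g_M^{(\l_j)},\l-\l_j)=y_{M,\l-\l_j,\l_j}$ with no further correction. Similarly, $\g_M^{(1,m)}$ has edges $(1,4,m-1)$ and $(2,3,M-m)$; relabelling its source vertices by the transposition $(1\,3)$ produces the graph with edges $(1,2,M-m)$ and $(3,4,m-1)$, i.e.\ the graph occurring in $y_{M-m,\l,m}$. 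Arguing as in the proofs of \Cref{lem::sign} and \Cref{lem::modifications}, this relabelling, composed with a $G_{4,1,d}$-permutation that reverses the $2(M-m)$ lower points carried by the arc sequence of label $M-m$, identifies the corresponding arc diagrams; the reversal contributes a sign $(-1)^{M-m}$ via $\chi_{4,1,d}$, while transposing two of the three source copies of $\L^e V$ contributes $\rho$ on $S^{3\rho}\L^e V$. This yields $y_d(\g_M^{(1,m)},\l)=(-1)^{M-m}\rho\, y_{M-m,\l,m}$.

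Substituting, the first sum becomes $2\rho\sum_{m=1}^M(-1)^{M+m}(-1)^{M-m}\, y_{M-m,\l,m}=2\rho\sum_{m=1}^M y_{M-m,\l,m}$, using $(-1)^{M+m}(-1)^{M-m}=1$. By part (b) of the preceding lemma, $y_{M-m,\l,m}=0$ unless $(-1)^{M-m}=\rho$; since $(-1)^{M-m}=(-1)^M(-1)^m=\rho(-1)^m$, this discards precisely the terms with $m$ odd. What remains is $2\rho\sum_{1\le m\le M\text{ even}}y_{M-m,\l,m}$, and together with the partition sum this is exactly \Cref{eq::omega-sym-e-1}. Finally, one checks that the left-hand sides match: $\omega^\rho(x^\rho_{M,\l})$ is by definition $\omega$ applied to the symmetrised basis element indexed by $\g_M$ and $\l$, which is the quantity computed by \Cref{cor::omega-sym}.

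The one delicate point is the sign in $y_d(\g_M^{(1,m)},\l)=(-1)^{M-m}\rho\, y_{M-m,\l,m}$: one must combine the $\chi_{4,1,d}$-sign of the lower-point reversal along the $(M-m)$-labelled arc sequence with the $\rho$ coming from transposing source tensor factors, and verify that these interact with the coefficient $(-1)^{M+m}$ of \Cref{cor::omega-sym} so that, once the odd-$m$ terms drop out, the surviving contributions carry coefficient exactly $2\rho$. Everything here is of the same nature as the manipulations already carried out in the graphical Jacobi calculus above, so no new ingredients are required.
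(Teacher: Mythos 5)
Your proof is correct and follows essentially the same route as the paper: specialize \Cref{cor::omega-sym} to $e=1$, identify $\g_M^{(1,m)}$ and $\g_M^{(\l_j)}$ with the graphs defining $y_{M-m,\l,m}$ and $y_{M,\l-\l_j,\l_j}$, and discard the odd-$m$ terms via part (b) of the preceding lemma. The only (immaterial) difference is that you relate $\g_M^{(1,m)}$ to the graph of $y_{M-m,\l,m}$ via the transposition $(1\,3)$ plus a lower-point reversal, picking up the factor $\rho(-1)^{M-m}$, whereas the paper uses the rotation of the three upper vertices, which is even and preserves the arc-sequence orientations so that no sign appears; the two bookkeepings agree because $\rho(-1)^{M-m}=1$ on the support of $y_{M-m,\l,m}$, and both yield the asserted formula.
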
 

\begin{proof}
The formula is obtained by specializing $e=1$ in \Cref{eq::omega-sym}.
First, note that if $\g=\myxdots{2}{(a1) -- node[above]{$M$} (a2)}$, then for all $1\le m\le M$, $\ell\ge1$,
$$
\g^{(1,m)} = \myxmoredots{3}{1}{(a2) --node[above=4pt]{$M-m$} (a3) (a1) --node[below left]{$m-1$} (b1)} ,
\quad
\g^{(\ell)} = \myxmoredots{3}{1}{(a1) --node[above]{$M$} (a2) (a3) --node[below right]{$\ell-1$} (b1)} ,
$$
where the first graph is the one appearing in $y_{M-m,\l,m}$ up to a rotation of the three upper points, and the second graph is the one appearing in $y_{M,\l,\ell}$ or $y_{M,\l-\ell,\ell}$.
Hence, \Cref{eq::omega-sym} becomes
\begin{align*}
 \omega^\rho(x^\rho_{M,\l})
 &= 2 \sum_{m=1}^M (-1)^{M+m} y_{M-m,\l,m} 
 + 2 \sum_{\l_j} \l_j y_{M,\l-\l_j,\l_j} .
\end{align*}
Now $(-1)^M=\rho$ and $y_{M-m,\l,m}=0$ if $(-1)^{M-m}\neq\rho=(-1)^M$.
\end{proof}

For any partition $\l$ and $\ell>0$, let $m_\l(\ell)$ be the number of times $\ell$ appears as a part in $\l$.

\begin{definition} \label{def::kappa}
For each $p\ge0$, $(-1)^p=\rho$, set
$$
\k^{(\rho, p)} := \sum_{\nu\in P^+, |\nu|\le p}
\frac{(-\rho)^{\#\nu}}{\nu_1\dots\nu_{\#\nu} m_\nu(\nu_1)!\dots m_\nu(\nu_{\#\nu})!}
x^\rho_{w-|\nu|,\nu}
$$
\end{definition}

\begin{proposition} $\J^\rho_1$ is spanned by $\{\k^{(\rho,w)}: w\ge0, (-1)^w=\rho\}$.
\end{proposition}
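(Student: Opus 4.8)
The plan is to show two things: first, that each $\k^{(\rho,w)}$ does lie in $\J^\rho_1$, i.e. solves $\omega^\rho(\k^{(\rho,w)})=0$; and second, that these elements span all of $\J^\rho_1$. For the spanning part, \Cref{cor::data-solution}(a) tells us that any $\k\in\J^\rho_1$ is a linear combination of the basis elements $s_{M+|\l|}\circ x^\rho_{M,\l}$ with $(-1)^M=\rho$, so $\J^\rho_1$ is carved out of this basis by the linear equations coming from $\omega^\rho=0$. The key input is \Cref{eq::omega-sym-e-1}, which expresses $\omega^\rho(x^\rho_{M,\l})$ as an explicit linear combination of the linearly independent elements $y_{M',\l',k}$ (linear independence by the preceding lemma, part (c)). The strategy is therefore to read off, for each fixed total degree $w$ and each target vector $y_{M',\l',k}$, the linear equation that the coefficients of a general $\k=\sum c_{M,\l}\, s_{M+|\l|}\circ x^\rho_{M,\l}$ (with $M+|\l|=w$) must satisfy, and then to verify that the proposed $\k^{(\rho,w)}$ satisfies all of them and that the solution space is one-dimensional in each degree $w$.

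The core computation is to apply $\omega^\rho$ to $\k^{(\rho,w)}$ and collect the coefficient of a fixed $y_{M',\l',k}$. By \Cref{eq::omega-sym-e-1}, a term $y_{M',\l',k}$ arises from $x^\rho_{M,\l}$ in two ways: either $M=M'+k$ with $k$ even, $\l=\l'$ (the first sum, with coefficient $2\rho$), or $M=M'$ and $\l=\l'\sqcup(k)$, i.e. $\l'$ together with one extra part equal to $k$ (the second sum, with coefficient $2k\cdot m_\l(k)$, the multiplicity accounting for the several parts of $\l$ equal to $k$ that all contribute the same $y$). Writing $\nu=\l$ in the defining sum for $\k^{(\rho,w)}$, the coefficient of $y_{M',\l',k}$ becomes
\begin{align*}
2\rho \cdot [k\text{ even}]\cdot \frac{(-\rho)^{\#\l'}}{\l'_1\cdots\l'_{\#\l'}\, m_{\l'}(\l'_1)!\cdots}
\;+\; 2k\, m_{\l'\sqcup(k)}(k)\cdot \frac{(-\rho)^{\#\l'+1}}{k\cdot\l'_1\cdots\l'_{\#\l'}\cdot m_{\l'\sqcup(k)}(k)!\cdot m_{\l'}(\l'_1)!\cdots}.
\end{align*}
Here I use $(-1)^k=\rho$ in the first term when $k$ is even combined with $(-1)^{M}=\rho$; and in the second term the factor $m_{\l'\sqcup(k)}(k)\big/ m_{\l'\sqcup(k)}(k)!=1/(m_{\l'\sqcup(k)}(k)-1)! = 1/m_{\l'}(k)!$ cleans up. After canceling the common factor $\frac{2(-\rho)^{\#\l'}}{\l'_1\cdots\, m_{\l'}(\l'_1)!\cdots}$, the bracket reduces to $\rho\cdot[k\text{ even}] + (-\rho)/m_{\l'}(k)!\cdot m_{\l'}(k)!\cdot(\text{sign bookkeeping})$; the point is that the two contributions are designed to cancel exactly — the reciprocal-factorial normalization in \Cref{def::kappa} is precisely what makes the "add a part $k$" move and the "shrink $M$ by an even $k$" move produce opposite coefficients. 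I would write this cancellation out carefully for $k$ even (both terms present, they cancel) and for $k$ odd (the first term is absent; but then $(-1)^{M'}=(-1)^{M}=\rho$ forces the $y_{M',\l',k}$ with odd $k$ to vanish by part (b) of the lemma, so there is nothing to check). This establishes $\k^{(\rho,w)}\in\J^\rho_1$.

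For spanning, I would argue that in each degree $w$ the solution space of $\omega^\rho=0$ is exactly one-dimensional, so that $\k^{(\rho,w)}$ (which is nonzero, since its leading term $x^\rho_{w,\emptyset}$ has coefficient $1$) spans it. To see dimension $\le 1$: order the partitions $\nu\in P^+$ with $|\nu|\le w$ so that a general solution $\k=\sum_{|\nu|\le w} c_\nu\, s_w\circ x^\rho_{w-|\nu|,\nu}$ has its coefficients determined recursively by $c_\emptyset$. Concretely, fix $\l'$ with $|\l'|<w$ and consider the coefficient of $y_{w-|\l'|-k,\,\l',\,k}$ for a suitable choice of even $k$ (say $k=$ smallest even number with $w-|\l'|-k\ge 0$, which exists since $(-1)^w=\rho$ means $w-|\l'|$ is even when... — more carefully, pick $k$ so that this $y$ receives a contribution from $x^\rho_{w-|\l'|,\l'}$, hence involves $c_{\l'}$, and from $x^\rho_{w-|\l'|-k,\,\l'\sqcup(k)}$, hence involves $c_{\l'\sqcup(k)}$). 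Setting this coefficient to zero expresses $c_{\l'\sqcup(k)}$ (a partition of strictly larger $|\cdot|$) in terms of $c_{\l'}$, and iterating downward from $\nu=\emptyset$ determines every $c_\nu$ as a scalar multiple of $c_\emptyset$. By construction these recursions are solved by $\k^{(\rho,w)}$, so the solution space is spanned by it. I expect the main obstacle to be the combinatorial bookkeeping in this last step: making the recursion genuinely triangular — i.e. exhibiting, for each partition $\nu\neq\emptyset$ appearing, a specific equation in which its coefficient $c_\nu$ is expressed in terms of coefficients of partitions $\nu'$ with $|\nu'|<|\nu|$ — requires choosing the index $k$ and the target $y$ carefully and checking that no unwanted cross-terms obstruct the elimination. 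Once the triangular structure is in place, both the membership check and the uniqueness are immediate from \Cref{eq::omega-sym-e-1} and the linear independence of the $y_{M,\l,k}$.
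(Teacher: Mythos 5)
Your proposal is correct and follows essentially the same route as the paper: read off from \Cref{eq::omega-sym-e-1} that each target $y_{M',\l',k}$ receives contributions from exactly two basis elements ($x^\rho_{M'+k,\l'}$ with coefficient $2\rho$ and $x^\rho_{M',\l'\sqcup(k)}$ with coefficient $2k\,m_{\l'\sqcup(k)}(k)$), derive the resulting triangular recursion on the coefficients, and check that the reciprocal factorials in \Cref{def::kappa} solve it while uniqueness up to the scalar $\alpha_{w,\emptyset}$ gives spanning. (The only blemish is your parenthetical for odd $k$: the correct reason nothing occurs there is that both sums in \Cref{eq::omega-sym-e-1} only produce even $k$ — the first sum is over even $m$ and the parts of $\l\in P^+$ are even — not part (b) of the lemma; this does not affect the argument.)
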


\begin{proof} 
Write $\k:=\sum_{M\ge0,\l\in P^+} \alpha_{M,\l} x^\rho_{M,\l}$ with coefficients $\alpha_{M,\l}\in\kk$ only finitely many of which are non-zero. Then $J(\k)=0$ if and only if
\begin{equation} \label{eq::coefficients-e-1}
2 m_\l(\l_j) \l_j \alpha_{M,\l} = -2\rho \alpha_{M+\l_j,\l-\l_j}
\end{equation}
for all $M$, $\l$, and $j$, as \Cref{eq::omega-sym-e-1}, $x^\rho_{M,\l-\l_j,\l_j}$ appears with coefficient $2 m_\l(\l_j) \l_j$ in $\omega^\rho(x^\rho_{M,\l})$, with coefficient $2\rho$ in $\omega^\rho(x^\rho_{M+\l_j,\l-\l_j})$, and with coefficient $0$ otherwise.

Using this repeatedly, we see that $J(\k)=0$ only if
$$
\alpha_{w-|\nu|,\nu}
= \frac{-\rho}{\nu_1 m_\nu(\nu_1)} \alpha_{w-|\nu|+\nu_1, \nu-\nu_1}
=\dots
= \frac{(-\rho)^{\#\nu}}{\nu_1\dots\nu_{\#\nu} m_\nu(\nu_1)!\dots m_\nu(\nu_{\#\nu})!} \alpha_{s, \emptyset} .
$$
On the other hand, it follows immediately that $\k^{(\rho,w)}$ satisfies \Cref{eq::coefficients-e-1} for all $w\ge0$.
\end{proof}

Let us summarizing the results obtained in this section:

\begin{theorem} \label{thm::PBW-defs}
The space of interpolating PBW deformations $\J^\rho_e$ is the vector space spanned by
\begin{itemize}
\item $\{\k^{(\rho,w)}: w\ge0, (-1)^w=\rho\}$ if $e=1$,
\item $\{\k^\form\}\cup\{x^+(\g^{(\mu,\nu)},\emptyset):(\mu,\nu)\in Q_1^+\}$ if $e=2$, $\rho=1$,
\item $\{\k^\Lie\}\cup\{x^-(\g^{(\mu,\nu)},\emptyset):(\mu,\nu)\in Q_1^-\}$ if $e=2$, $\rho=-1$,
\item $\{x^\rho(\g^{(\mu,\nu)},\emptyset):(\mu,\nu)\in Q_{e/2}^\rho\}$ if $e>2$ even,
\end{itemize}
and the space of PBW deformations is zero in all other cases. \end{theorem}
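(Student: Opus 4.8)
The plan is to assemble the statement from the case analysis carried out over the course of this section: for each range of the pair $(e,\rho)$ the corresponding spanning set (in several cases even a basis) has already been exhibited, or the corresponding space shown to vanish, so the proof reduces to collecting those results and to disposing of the single combination not yet treated, namely $e\ge 3$ with $e$ odd.

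First I would invoke, for $e=1$, the proposition stating that $\J^\rho_1$ is spanned by $\{\k^{(\rho,w)}:w\ge 0,\ (-1)^w=\rho\}$; this is the first bullet. For $e=2$ I would invoke the proposition providing the basis $\{\k^\form\}\cup\{x^+(\g^{(\mu,\nu)},\emptyset):(\mu,\nu)\in Q^+_1\}$ of $\J^+_2$ and the basis $\{\k^\Lie\}\cup\{x^-(\g^{(\mu,\nu)},\emptyset):(\mu,\nu)\in Q^-_1\}$ of $\J^-_2$, which give the second and third bullets. For even $e>2$ the same proposition yields the basis $\{x^\rho(\g^{(\mu,\nu)},\emptyset):(\mu,\nu)\in Q^\rho_{e/2}\}$ of $\J^\rho_e$, which is the fourth bullet.

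It then remains to check that $\J^\rho_e=0$ for every odd $e\ge 3$ and both $\rho\in\{\pm 1\}$. By \Cref{cor::data-solution}(c), any $\k$ with $J(\k)=0$ lies in the span of the maps $x^\rho(\g,\emptyset)$ with $\g\in\G^\rho_{2,e}$ having no edge between its two vertices; hence every edge of such a $\g$ is a loop. Since $\g$ is $e$-valent and a loop contributes $2$ to the valence of its vertex, each of the two vertices would have to carry exactly $e/2$ loops, which is impossible when $e$ is odd. Thus no such $\g$ exists, the span is trivial, and $\J^\rho_e=0$. Combining the four cases with this vanishing proves the theorem.

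I do not expect a genuine obstacle: the substantive work — the classification of the solutions of the interpolated Jacobi identity via \Cref{prop::omega} together with the lemmas of the previous section, and the coefficient computations of \Cref{cor::data-solution} — has already been carried out, and the only new ingredient is the elementary parity remark that an $e$-valent pseudograph all of whose edges are loops forces $e$ to be even. The proof is essentially bookkeeping of the preceding propositions.
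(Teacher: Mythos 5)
Your proposal is correct and follows essentially the same route as the paper: the theorem is assembled from the preceding propositions for $e=1$, $e=2$, and even $e\ge 3$, the paper's own proof being exactly this bookkeeping. Your explicit parity argument for odd $e\ge 3$ (an $e$-valent pseudograph whose edges are all loops forces $e$ even, via \Cref{cor::data-solution}) is the same reasoning underlying the paper's preceding proposition, and you state it slightly more carefully by covering both signs of $\rho$.
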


\begin{proof}
We have shown that the asserted deformation maps span the respective spaces of PBW deformations. The linear independence of the sets follows from their definition using disjoint sets of basis vectors in the corresponding hom-spaces. 
\end{proof}

\section{Specialized PBW deformations} \label{sec::specialization}

Recall from \Cref{sec::RepOt} that we have a symmetric monoidal specialization functor
$$
F=F_{m,2n}\colon\RepOt\to\SVec_\kk
$$
for all $m,n\ge0$ for $t=m-2n$. If $n=0$, the image of $F$ is in fact in $\Vec_\kk$, in any case, it is a full subcategory of $\Rep G_{m,2n}$ for $G_{m,2n}:=\OSp(m|2n)$. By abuse of notation, we denote $F(V)\in\Rep G_{m,2n}$ by the symbol $V$ and $F(\gf)=F(\L^2V)\in \Rep G_{m,2n}$ (see \Cref{lem::interpolating-lie-algebra}) by the symbol $\gf$, as well, as long as the usage is unambiguous. Note that $\gf\in\Rep G_{m,2n}$ is the Lie (super)algebra of $G_{m,2n}$.

In the following, we compute the specializations of the PBW deformations found in \Cref{thm::PBW-defs} under the functor $F_{m,2n}$.

Two describe the functor $F_{m,2n}$ on the level of morphisms, we define distinguished morphisms in $\Rep G_{m,2n}$. Let $e_1,\dots,e_{m+2n}$ the standard basis of $V_\kk=\kk^{m|2n}$ such that $e_i$ is even for $1\le i\le m$ and odd for $m<i\le m+2n$. We set 
$$
e_i^*:=\begin{cases}
e_i & 1\le i\le m \\
e_{i+1} & m<i\le m+2n \todd \\
-e_{i-1} & m<i\le m+2n \teven
\end{cases} .
$$
The vectors $(e_i^*)_i$ form a basis of $V_\kk$, as well, since they are the same vectors as $(e_i)_i$ up to signs. The two bases are dual to each other with respect to a non-degenerate orthosymplectic form on $V_\kk$.

\begin{definition} \label{def::images-F}
We define
\begin{align*}
f_\cup &\: V_\kk\o V_\kk\to\kk, \quad 
e_i^*\o e_j \mapsto \delta_{ij} , \\
f_\cap &\: \kk\to V_\kk\o V_\kk, \quad
1\mapsto \sum_i e_i\o e_i^* , \\
f_\cross &\: V_\kk\o V_\kk\to V_\kk\to V_\kk, \quad
e_i\o e_j\mapsto (-1)^{|e_i| |e_j|} e_j\o e_i ,
\end{align*}
where we use the degrees $|e_i|,|e_j|\in\{0,1\}$ of homogeneous basis elements in the supervector space $V_\kk$.
\end{definition}

Note that by our definitions,
$$
f_\cap(1) = \sum_i e_i\o e_i^* = \sum_i (-1)^{|e_i|} e_i^*\o e_i .
$$

\begin{lemma} \label{lem::F-computations1}
$(V_\kk,f_\cup,f_\cap)$ defines an object with a symmetric self-duality of dimension $t$ in $\Rep G_{m,2n}$. Hence, we have specialization functors $F=F_{m,2n}$ sending $\cap\mapsto f_\cap$, $\cup\mapsto f_\cup$, and $\cross\mapsto f_\cross$.
\end{lemma}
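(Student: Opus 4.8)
The plan is to verify the three required conditions directly from the definitions in \Cref{def::images-F}, namely that the triple $(V_\kk,f_\cup,f_\cap)$ satisfies the axioms \Cref{eq::universal-property} for a symmetric self-duality of dimension $t$ in $\Rep G_{m,2n}$, and that all three morphisms are $G_{m,2n}$-equivariant. Once this is established, the universal property of $\RepOt$ recalled in \Cref{sec::RepOt} immediately produces the symmetric monoidal functor $F=F_{m,2n}$ with the stated action on generators, so the content of the lemma is really the verification of \Cref{eq::universal-property} plus equivariance.

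\textbf{First} I would check the snake/zigzag identity $(V_\kk\o f_\cup)(f_\cap\o V_\kk)=\id_{V_\kk}=(f_\cap\o V_\kk)(V_\kk\o f_\cap)$. Evaluating the left composite on $e_j$ gives $\sum_i e_i\, f_\cup(e_i^*\o e_j)=\sum_i e_i\delta_{ij}=e_j$ using the duality $f_\cup(e_i^*\o e_j)=\delta_{ij}$; the other composite is handled symmetrically using the alternative expression $f_\cap(1)=\sum_i(-1)^{|e_i|}e_i^*\o e_i$ noted just before the lemma, where the Koszul sign $(-1)^{|e_i|}$ is exactly what is needed to make the evaluation map pair up correctly on the supervector space. \textbf{Next}, the trace condition $f_\cup f_\cap=t$: composing gives $\sum_i f_\cup(e_i\o e_i^*)$, and since $f_\cup(e_i^*\o e_j)=\delta_{ij}$ one must pass $e_i$ past $e_i^*$ incurring a sign $(-1)^{|e_i|}$, so the sum is $\sum_i(-1)^{|e_i|}=m-2n=t$, which is precisely the parameter value for which $F_{m,2n}$ is defined. \textbf{Then} the symmetry conditions $f_\cup=f_\cup c_{V_\kk,V_\kk}$ and $f_\cap=c_{V_\kk,V_\kk}f_\cap$ with $c_{V_\kk,V_\kk}=f_\cross$: for the first, $f_\cup(f_\cross(e_i\o e_j))=(-1)^{|e_i||e_j|}f_\cup(e_j\o e_i)$, and comparing with $f_\cup(e_i\o e_j)$ reduces to the easily-checked identity that the orthosymplectic form on $\kk^{m|2n}$ is supersymmetric in the appropriate sense (symmetric on the even part, antisymmetric on the odd part), which follows from the explicit sign pattern in the definition of $e_i^*$; the coevaluation identity is dual.

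\textbf{Finally} I would observe that $f_\cup$, $f_\cap$, $f_\cross$ are all $G_{m,2n}$-module maps: $f_\cross$ is the symmetric braiding of $\SVec_\kk$, which is a morphism in any symmetric monoidal subcategory; and $f_\cup$, $f_\cap$ are $G_{m,2n}$-equivariant precisely because $G_{m,2n}=\OSp(m|2n)$ is by definition the stabilizer of the orthosymplectic form, i.e.\ of $f_\cup$ (equivalently of $f_\cap$). Having verified \Cref{eq::universal-property} and equivariance, the functor $F_{m,2n}$ exists by the universal property and sends the generating arc diagrams to $f_\cup$, $f_\cap$, $f_\cross$ as claimed. \textbf{The main obstacle} is purely bookkeeping: getting every Koszul sign right in the superalgebra computation, in particular making sure the two displayed forms of $f_\cap(1)$ are consistent and that the trace genuinely evaluates to $m-2n$ rather than $m+2n$; there is no conceptual difficulty, since the universal property does all the structural work once the four scalar/linear-algebra identities are in place.
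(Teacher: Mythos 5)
Your proposal is correct and follows essentially the same route as the paper: both invoke the universal property of $\RepOt$ (Deligne, Prop.~9.4) and reduce the lemma to verifying the four identities of \Cref{eq::universal-property} for $(V_\kk,f_\cup,f_\cap)$, which the paper dismisses as a "straight-forward computation" and which you carry out explicitly (with the correct Koszul signs, in particular $f_\cup f_\cap=\sum_i(-1)^{|e_i|}=m-2n=t$). Your added remark on $\OSp(m|2n)$-equivariance of $f_\cup,f_\cap$ is a welcome explication of what the paper leaves implicit in the phrase "in $\Rep G_{m,2n}$".
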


\begin{proof} $f_\cross$ is the symmetric braiding in the category of supervector spaces. Hence, by the universal property of $\RepOt$ \cite{Del}*{Prop.~9.4} (see \Cref{sec::RepOt}), it suffices to check that $V_\kk$ is an object with a symmetric self-duality defined by $f_\cup,f_\cap$ and of dimension $t=m-2n$. This amounts to checking that
$$
(V_\kk\o f_\cup)(f_\cap\o V_\kk) = V_\kk = (f_\cap\o V_\kk)(V_\kk\o f_\cap) ,\quad
f_\cup = f_\cup f_\cross ,\quad
f_\cap = f_\cross f_\cap ,\quad
f_\cup f_\cap = t ,
$$
which is a straight-forward computation.
\end{proof}

To describe the images of certain morphisms under $F$, we define elements
$$
E_{ij} := e_i\o e_j^*,\quad
E^*_{ij} := e_i^*\o e_j \quad
\tforall 1\le i,j\le m+2n\quad \in V_\kk\o V_\kk ,
$$
so $(E_{ij})_{i,j}$ and $(E^*_{ij})_{i,j}$ are bases of $V_\kk\o V_\kk$.

\begin{lemma} \label{lem::F-computations2}
The following holds for all $m\ge1$:
\begin{align}
F(|\cup|) &= (E_{ij}\o E_{k\ell} 
\mapsto \delta_{jk} E_{i\ell} ) \label{eq::F-1} \\
F(\uarch{\cup}) &= (E_{ij}\o E_{k\ell} 
\mapsto (-1)^{|e_i|} \delta_{jk} \delta_{i\ell})  \label{eq::F-2} \\
F(| \cap^{m-1} |) &= (E^*_{ij}
\mapsto \sum_{i_2,\dots,i_m} 
E^*_{ii_2}\o E^*_{i_2i_3}\o\dots\o E^*_{i_mj} ) \label{eq::F-3} \\
F(\arch{\cap^{m-1}}) &= (1\mapsto 
\sum_{i_1,\dots,i_m} 
(-1)^{|e_{i_1}|} E^*_{i_1 i_2}\o E^*_{i_2 i_3}\o\dots\o E^*_{i_m i_1})  \label{eq::F-4}
\end{align}
\end{lemma}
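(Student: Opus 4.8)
The plan is to verify each of the four identities by writing the relevant arc diagram as a composite of tensor products of the elementary morphisms $\id_V$, $\cross$, $\cup$, $\cap$, applying $F$ via \Cref{lem::F-computations1} together with the (strict) monoidality of $F$, and then evaluating the resulting $\SVec_\kk$-morphism on the given basis vectors of $V_\kk^{\o2}$ or on $1\in\kk$, carefully tracking the Koszul signs.

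For \Cref{eq::F-1}, I would use that $|\cup|\colon V^{\o4}\to V^{\o2}$ equals $\id_V\o\cup\o\id_V$, so $F(|\cup|)=\id_{V_\kk}\o f_\cup\o\id_{V_\kk}$; evaluating on $E_{ij}\o E_{k\ell}=e_i\o e_j^*\o e_k\o e_\ell^*$ contracts the two inner factors through $f_\cup(e_j^*\o e_k)=\delta_{jk}$ (with no sign, since an even morphism acts on a middle pair), leaving $\delta_{jk}\,e_i\o e_\ell^*$. For \Cref{eq::F-2}, $\uarch{\cup}$ is the composite $V^{\o4}\xrightarrow{\id_V\o\cup\o\id_V}V^{\o2}\xrightarrow{\cup}\one$; the inner contraction is as above, and the outer one evaluates $f_\cup$ on $e_i\o e_\ell^*$. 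The only genuine point here is that $f_\cup(e_i\o e_\ell^*)=(-1)^{|e_i|}\delta_{i\ell}$, which follows from the symmetry $f_\cup=f_\cup c_{V,V}$ in \Cref{eq::universal-property} (or directly from the definition of the $e_i^*$), and this is precisely the origin of the factor $(-1)^{|e_i|}$.

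For \Cref{eq::F-3}, I would write $|\cap^{m-1}|$ as $\id_V\o f_\cap^{\o(m-1)}\o\id_V\colon V\o\one\o V\to V\o V^{\o2(m-1)}\o V$ (using $\one^{\o(m-1)}=\one$); since $f_\cap$ is even, evaluating on $e_i^*\o e_j$ simply inserts $m-1$ copies of $f_\cap(1)=\sum_k e_k\o e_k^*$ between $e_i^*$ and $e_j$ without any sign, and regrouping the $2m$ resulting factors into consecutive pairs $(1,2),(3,4),\dots$ yields the stated sum of $E^*_{ii_2}\o\cdots\o E^*_{i_mj}$. For \Cref{eq::F-4}, $\arch{\cap^{m-1}}$ is the composite $\one\xrightarrow{\cap}V\o V\xrightarrow{\id_V\o f_\cap^{\o(m-1)}\o\id_V}V^{\o2m}$; evaluating on $1$, I would expand the outer cap using the reversed form $f_\cap(1)=\sum_{i_1}(-1)^{|e_{i_1}|}e_{i_1}^*\o e_{i_1}$ recorded right after \Cref{def::images-F} (itself an instance of $f_\cap=c_{V,V}f_\cap$), so that after inserting the $m-1$ inner caps and regrouping into consecutive pairs one lands precisely in the $E^*$-basis, with the single surviving sign $(-1)^{|e_{i_1}|}$ coming from that reversed expansion.

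The calculations are routine; the only subtlety, and what I would regard as the ``main obstacle'', is the Koszul-sign bookkeeping in $\SVec_\kk$, and the only place a nontrivial sign survives is the evaluation of $f_\cup$ (resp.\ of $f_\cap$) with its two legs in the order opposite to the one built into the $E$-basis (resp.\ $E^*$-basis), which is governed by the symmetry relations in \Cref{eq::universal-property}. I do not expect any real difficulty beyond organizing these signs consistently.
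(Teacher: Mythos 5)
Your proposal is correct and follows exactly the route the paper intends: the paper's own proof is the one-line remark that everything ``follows directly from'' \Cref{lem::F-computations1}, i.e.\ from decomposing each arc diagram into tensor products and composites of $\id_V$, $f_\cup$, $f_\cap$ and evaluating on the given basis vectors, which is precisely the computation you spell out. Your sign bookkeeping — in particular that the only surviving signs come from evaluating $f_\cup(e_i\o e_\ell^*)=(-1)^{|e_i|}\delta_{i\ell}$ via $f_\cup=f_\cup c_{V,V}$ and from the reversed expansion of $f_\cap(1)$ — is accurate and consistent with the stated formulas.
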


\begin{proof} All these follow directly from \Cref{lem::F-computations1}.
\end{proof}

Using the form $f_\cup$ on $V_\kk$ to identify $V_\kk\simeq V_\kk^*$, we can view the elements $E_{ij}$ and $E^*_{ij}$ as matrices in $V_\kk\o V_\kk\simeq V_\kk\o V_\kk^*=\gl(V_\kk)=:\gl$ and compute matrix products and \emph{supertraces}, the latter ones being defined as
$$
\str(E_{ij}) := \delta_{ij} (-1)^{|e_i|} .
$$
The \emph{supertrace form} on $\gl$ is defined as the supersymmetric bilinear form $X\o Y\mapsto \str(XY)$, and the Lie bracket on $\gl$ is given by the supercommutator $X\o Y\mapsto XY-(-1)^{|X||Y|}YX$.
We can view $V\wedge V\simeq\gf$ as a subspace of $\gl$ and verify that it is, in fact, a Lie subalgebra.

\begin{lemma} Up to non-zero scalars, $F(\k^\form)\:S^2\gf\to\kk$ is the supersymmetric supertrace form and $F(\k^\Lie)\:\L^2\gf\to\gf$ is the Lie bracket.
\end{lemma}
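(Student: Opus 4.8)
The plan is to identify both morphisms $F(\k^\form)$ and $F(\k^\Lie)$ explicitly as maps of supervector spaces, and then recognize them as (scalar multiples of) the supertrace form and the supercommutator bracket, respectively. Recall from \Cref{def::form-field} that $\k^\form = x^+(\g^\form,\emptyset)$ and $\k^\Lie = x^-(\g^\Lie,\emptyset)$, where $\g^\form$ is the graph on two vertices with two edges of label $0$, and $\g^\Lie$ is the graph on two vertices with edges of labels $1$ and $0$. Unwinding \Cref{def::x-g-l} and the symmetrization in \Cref{def::x-rho}, $x^+(\g^\form,\emptyset)$ is (a symmetrization of) the arc diagram with $4$ upper points and no lower points that pairs them as two ``$\cup$''s, i.e.\ essentially $\cup\o\cup$ with an appropriate crossing, while $x^-(\g^\Lie,\emptyset)$ is the morphism $T^4 V\to T^2 V$ built from one $\cup$ connecting an outer pair of upper points, one ``$|\cup|$''-type arc through a single lower pair, and the outgoing legs. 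In both cases I would first restrict these arc diagrams to $S^{\pm2}\L^2V=S^{\pm2}\gf$ using that $\gf=\L^2 V=\im(\alt_2)$, exactly as in \Cref{prop::omega_d} and \Cref{lem::action-phi-e}, so that the answer is expressed purely in terms of $\cup$, $\cap$, the braiding, and the antisymmetrizer.

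Next I would apply the specialization functor $F=F_{m,2n}$, using \Cref{lem::F-computations1} and \Cref{lem::F-computations2}, to convert these arc-diagram morphisms into concrete formulas on bases. Writing elements of $\gf\subset\gl$ as (anti)symmetrized matrices $E_{ij}$ as in the paragraph before the statement, I expect $F(\k^\form)$ to send $X\o Y\in S^2\gf$ to a scalar multiple of $\sum \str(E_{\cdot\cdot})$-type contraction, which by \Cref{eq::F-1} and \Cref{eq::F-2} (the cup contracting matrix indices, the ``$\uarch{\cup}$'' giving a supertrace-like pairing with the sign $(-1)^{|e_i|}$) becomes $\str(XY)$ up to scalar; supersymmetry of this form is automatic since it is defined on $S^2\gf$ and $\gf$ is built from an even number of tensor factors so the relevant Koszul signs work out. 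For $F(\k^\Lie)$, the single lower pair carries a $\cup\circ\cap$ contraction, which under \Cref{eq::F-1}--\Cref{eq::F-4} produces a matrix product $XY$, and the antisymmetrization over the two incoming copies of $\gf$ (the $x^-$ symmetrization together with $\g^\Lie \ne (\g^\Lie)^t$ giving the second term with a sign $\rho\sgn(\g^\Lie)=-(-1)^1 = 1$ flipped appropriately) produces $XY - (-1)^{|X||Y|}YX$, i.e.\ the supercommutator, up to a nonzero scalar. I would then note that the supercommutator restricted to $\gf$ indeed lands in $\gf$, as already observed in the line preceding the statement (``$V\wedge V\simeq\gf$ \ldots is, in fact, a Lie subalgebra'').

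Concretely, the steps in order are: (1) write out $x^+(\g^\form,\emptyset)$ and $x^-(\g^\Lie,\emptyset)$ as explicit arc diagrams via \Cref{def::x-g-l} and \Cref{def::x-rho}; (2) restrict to $S^{\pm2}\gf$ using $\gf = \im(\alt_2)$ and simplify using the relations \Cref{eq::universal-property}; (3) apply $F_{m,2n}$ using \Cref{lem::F-computations2} to get formulas $E_{ij}\o E_{k\ell}\mapsto(\dots)$; (4) re-express the result in terms of matrix multiplication and supertrace in $\gl$; (5) match with the definitions of the supertrace form and the supercommutator, checking that the scalar is nonzero (e.g.\ by evaluating on a single convenient pair such as $E_{12}\wedge E_{21}$ when $m\ge 2$, or on a loop-type element when $m=1$).

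The main obstacle I anticipate is bookkeeping of the Koszul signs: the arc diagrams encode the symmetric-monoidal structure of $\RepOt$, but $F$ lands in $\SVec_\kk$ where $f_\cross$ introduces signs $(-1)^{|e_i||e_j|}$, and $f_\cap$ has the extra sign in $\sum_i e_i\o e_i^*=\sum_i(-1)^{|e_i|}e_i^*\o e_i$; tracking these through the antisymmetrizer $\alt_2$ (itself built from $f_\cross$) and through $\cyc$ is where an error is most likely. The cleanest way to control this is to fix an ordered basis of $\gf$ once and evaluate both sides of the claimed identity on that basis, rather than manipulating the abstract diagrams; since both the supertrace form and the supercommutator are the \emph{unique} (up to scalar) $G_{m,2n}$-invariant morphisms of their respective types $S^2\gf\to\kk$ and $\L^2\gf\to\gf$ when $\gf$ is simple (or more generally by a direct dimension/weight count), it in fact suffices to check that $F(\k^\form)$ and $F(\k^\Lie)$ are nonzero and $G_{m,2n}$-equivariant of the correct type, the latter being automatic since they are specializations of morphisms in $\RepOt$. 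Thus the only genuine computation is the nonvanishing, which reduces to a single sign check.
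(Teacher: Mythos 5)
Your proposal matches the paper's proof: identify $x(\g^\form,\emptyset)$ and $x(\g^\Lie,\emptyset)$ as the arc diagrams $\uarch{\cup}$ and $|\cup|$ and apply \Cref{lem::F-computations2} (\Cref{eq::F-2} and \Cref{eq::F-1}) to read off the supertrace pairing and matrix multiplication, which become the supertrace form and the supercommutator after restriction to $S^{\pm2}\gf$. One small slip: in fact $\g^\Lie=(\g^\Lie)^t$ with $\sgn\g^\Lie=-1=\rho$, so $x^-(\g^\Lie,\emptyset)=2\,x(\g^\Lie,\emptyset)$ and the antisymmetry $XY-(-1)^{|X||Y|}YX$ comes entirely from restricting to $\L^2\gf$ rather than from the $x^-$ construction, but this does not affect your conclusion.
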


\begin{proof} We observe that by definition (see \Cref{def::form-field}), 
$$
x(\g^\form,\emptyset) = \uarch{\cup} 
\qquad\tand\qquad
x(\g^\Lie,\emptyset) = |\cup| , 
$$
where, as always, arc diagrams are unique only up to an action of the corresponding symmetry groups. Now the assertions follow from \Cref{lem::F-computations2}.
\end{proof}

For $1\le i,j\le m+2n$, let us define elements 
\begin{align*}
X_{ij}
&:= E_{ij} - (-1)^{|e_i||e_j|} E^*_{ji} = e_i\o e^*_j-(-1)^{|e_i||e_j|}e^*_j\o e_i,
\\
X^*_{ij}
&:= E^*_{ij} - (-1)^{|e_i||e_j|} E_{ji}
=e^*_i\o e_j - (-1)^{|e_i||e_j|} e_j\o e^*_i
\quad \in\L^2 V\simeq\gf .
\end{align*}
By definition, $X_{ij}=-(-1)^{|e_i||e_j|} X_{ji}^*$.

Next, we note that for all even $e\ge2$, we may view $\L^e V$ as a direct summand in $T^e V \cong T^{e/2} T^2 V$.

\begin{definition} For all even $e\ge2$ and $\mu\in P$ with $\#\mu=e/2$, we define $\ti h_{\mu} \: T^{e/2} T^2V \to S^{|\mu|} \gf$ by
\begin{equation} \label{eq::ti-h}
\ti h_{\mu}\Big( \bigotimes_{k=1}^{e/2} E^*_{i_k,j_k} \Big)
:= \prod_{k=1}^{e/2} \Big( 
\sum_{1\le i'_2,\dots,i'_{\mu_k}\le m+2n} 
X^*_{i_k,i'_2} X^*_{i'_2,i'_3} \dots X^*_{i'_{\mu_k-1},i'_{\mu_k}} X^*_{i'_{\mu_k},j_k}
\Big) ,
\end{equation}
and we define $h_{\mu}$ to be the restriction of $\ti h_{\mu}$ to $\L^e V$.
\end{definition}

\begin{lemma} For all even $e\ge 2$ and $(\mu,\nu)\in Q^\rho_{e/2}$, $F(x^\rho(\g^{(\mu,\nu)}),\emptyset)$ is up to a non-zero scalar the morphism 
$$
h_{\mu} \o h_{\nu}+\rho h_{\nu} \o h_{\mu}
\colon S^{2\rho} \L^e V\to S^{|\mu|+|\nu|} \gf
.
$$
\end{lemma}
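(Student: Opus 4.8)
We prove the statement by reducing it to a ``one-vertex'' computation together with a tensor-product bookkeeping step. Write $\ell:=e/2$, and let $\g_\mu\in\ti\Gamma_{1,e}$ be the one-vertex pseudograph with loops of labels $\mu_1,\dots,\mu_\ell$; fix the listing of its loops in this order and fix the upper points so that the $k$-th loop joins the upper points $2k-1$ and $2k$. Write $\delta_\mu:=x(\g_\mu,\emptyset)\in\Hom(T^eV,T^{2|\mu|}V)$ for these choices, and similarly $\delta_\nu$. Since $\g^{(\mu,\nu)}$ is the disjoint union of a copy of $\g_\mu$ on its first vertex and a copy of $\g_\nu$ on its second, no arc of $x(\g^{(\mu,\nu)},\emptyset)$ joins the block $B_1$ to the block $B_2$; by \Cref{lem::choices} we may therefore take the representative $x(\g^{(\mu,\nu)},\emptyset)=\delta_\mu\o\delta_\nu$, and likewise $x(\g^{(\nu,\mu)},\emptyset)=\delta_\nu\o\delta_\mu$. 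As $\g^{(\mu,\nu)}$ has no edge between its two vertices, $\sgn(\g^{(\mu,\nu)})=1$, and $(\g^{(\mu,\nu)})^t=\g^{(\nu,\mu)}$, so by \Cref{def::x-rho}
$$
x^\rho(\g^{(\mu,\nu)},\emptyset)=\delta_\mu\o\delta_\nu+\rho\,\delta_\nu\o\delta_\mu .
$$

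The one-vertex claim is that, up to a nonzero scalar $c_\mu$, one has $F(s_{|\mu|}\circ\delta_\mu)=\ti h_\mu$, and hence $F(s_{|\mu|}\circ\delta_\mu)|_{\L^eV}=c_\mu\,h_\mu$. To see this, write $s_{|\mu|}=\sym_{|\mu|}^{(2)}\circ\sym_{-2}^{\o|\mu|}$, so $F(s_{|\mu|}\circ\delta_\mu)=\sym_{|\mu|}^{(2)}\circ\sym_{-2}^{\o|\mu|}\circ F(\delta_\mu)$. The arc diagram $\delta_\mu$ is the horizontal concatenation over $k=1,\dots,\ell$ of the chain arc diagram with two upper points $2k-1,2k$ and $2\mu_k$ lower points, so by \Cref{lem::F-computations2}, \Cref{eq::F-3}, $F(\delta_\mu)$ sends $\bigotimes_kE^*_{i_k,j_k}$ to the tensor product over $k$ of the length-$\mu_k$ chains $\sum E^*_{i_k,\cdot}\o\dots\o E^*_{\cdot,j_k}$. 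Now $\sym_{-2}(e_i^*\o e_j)=e_i^*\o e_j-f_\cross(e_i^*\o e_j)=e_i^*\o e_j-(-1)^{|e_i||e_j|}e_j\o e_i^*=X^*_{ij}$, so $\sym_{-2}^{\o|\mu|}$ replaces every factor $E^*$ in these chains by the corresponding $X^*$, and $\sym_{|\mu|}^{(2)}$ records the result as a product in the supercommutative algebra $S\gf$. Comparing with \Cref{eq::ti-h} identifies the outcome with $c_\mu\,\ti h_\mu$ for a nonzero scalar $c_\mu$ (a product of factorials coming from the unnormalized symmetrizers $\sym$), and restricting the source to $\L^eV\subset T^eV$ gives $F(s_{|\mu|}\circ\delta_\mu)|_{\L^eV}=c_\mu\,h_\mu$.

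For the two-vertex case set $d:=|\mu|+|\nu|$. Since $\sym_{-2}^{\o d}$ respects the factorization $T^{2d}V=T^{2|\mu|}V\o T^{2|\nu|}V$, and since one has the identity of endomorphisms $\sym_d^{(2)}=\big(\sum_w w^{(2)}\big)\circ\big(\sym_{|\mu|}^{(2)}\o\sym_{|\nu|}^{(2)}\big)$ of $T^{2d}V$, with $w$ running over the $(|\mu|,|\nu|)$-shuffles of the $d$ pairs --- a sum which restricts on $S^{|\mu|}\gf\o S^{|\nu|}\gf$ to a nonzero multiple of the multiplication into $S^d\gf$ --- one gets that $s_d\circ(\delta_\mu\o\delta_\nu)$ equals a nonzero multiple of $(s_{|\mu|}\circ\delta_\mu)\o(s_{|\nu|}\circ\delta_\nu)$ followed by multiplication in $S\gf$. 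Applying $F$, restricting the source along $\L^eV\o\L^eV\hookrightarrow V_\kk^{\o2e}$, and using the one-vertex claim on each factor shows $F(s_d\circ(\delta_\mu\o\delta_\nu))|_{\L^eV\o\L^eV}=c\,(h_\mu\o h_\nu)$ for a nonzero $c$ (here $h_\mu\o h_\nu$ denotes, as in the statement, $h_\mu$ and $h_\nu$ applied and then multiplied in $S\gf$). Swapping the blocks $B_1,B_2$ is precomposition with $\tau^{(e)}$, which on $\L^eV\o\L^eV$ is the braiding $c_{\L^eV,\L^eV}$ and is absorbed into $s_d$ on the target side ($\sym_d^{(2)}$ being invariant under permutations of the $d$ pairs); combined with supercommutativity of $S\gf$ --- whereby the Koszul signs of the braiding cancel those of the multiplication --- this yields $F(s_d\circ(\delta_\nu\o\delta_\mu))|_{\L^eV\o\L^eV}=c\,(h_\nu\o h_\mu)$ with the same scalar $c$. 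Summing the two contributions and restricting the source along $S^{2\rho}\L^eV\hookrightarrow\L^eV\o\L^eV$ --- which is what ``$F(x^\rho(\g^{(\mu,\nu)},\emptyset))$'' refers to here, via the basis of \Cref{prop::Hom-sym} --- gives $F(x^\rho(\g^{(\mu,\nu)},\emptyset))=c\,(h_\mu\o h_\nu+\rho\,h_\nu\o h_\mu)$ with $c\neq0$, as claimed.

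The conceptual skeleton --- the diagram is a tensor product because $B_1$ and $B_2$ are unjoined, the block swap implements $\mu\leftrightarrow\nu$, and chains of $E^*$ turn into chains of $X^*$ under $\sym_{-2}$ --- is routine. The real work, and the main obstacle, is the consistent bookkeeping of the various super-signs (from $f_\cross$ inside $\sym_{-2}$, from the braiding $c_{\L^eV,\L^eV}$, and from supercommutativity of $S\gf$) together with the change of basis between $(e_i)_i$ and $(e_i^*)_i$, needed to match the embedding $\L^eV\subset V_\kk^{\o e}$ with the mixed $E^*$-basis used in \Cref{eq::ti-h}, and the verification that all of these cancel so that only a nonzero numerical factor survives.
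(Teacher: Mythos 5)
Your proof is correct and takes essentially the same route as the paper's, which is a one-line application of \Cref{lem::F-computations2} (each loop of $\g^{(\mu,\nu)}$ specializes to one chain factor of $\ti h_\mu$ or $\ti h_\nu$); you simply supply the details the paper leaves implicit, namely the splitting $x^\rho(\g^{(\mu,\nu)},\emptyset)=\delta_\mu\o\delta_\nu+\rho\,\delta_\nu\o\delta_\mu$, the identity $F(\sym_{-2})(E^*_{ij})=X^*_{ij}$, and the shuffle factorization of $s_d$. One small point in your favour: since the arc sequences attached to loops have two upper points, the relevant specialization formula is \Cref{eq::F-3}, which is the one you use, rather than \Cref{eq::F-4} as cited in the paper's proof.
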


\begin{proof} This is \Cref{lem::F-computations2} applied to $x^\rho(\g^{(\mu,\nu)},\emptyset)$, where $\g^{(\mu,\nu)}$ is defined in \Cref{def::gamma-mu-nu}: any part of $\mu$ or $\nu$ corresponds to a labelled loop in the graph $\g^{(\mu,\nu)}$ which corresponds to one factor in the product on the right-hand side of \Cref{eq::ti-h} by \Cref{eq::F-4}.
\end{proof}

We also note that we may view $S^{2\rho} V$ as a direct summand in $T^2 V$.

\begin{definition} For all $k\ge0$, $(-1)^k=\rho$, and all $\nu\in P^+$, we define $\ti g_{k,\nu}\colon T^2V \to S^{k+|\nu|}\gf$ by
\begin{equation} \label{eq::g}
\ti g_{k,\nu}(E^*_{ij}):= \Big(
\sum_{1\le i_2,\dots,i_k\le m+2n} X^*_{i,i_2} X^*_{i_2,i_3}\dots X^*_{i_k,j}
\Big) \prod_{k=1}^{\#\nu} \Big( 
\sum_{1\le i_1,\dots,i_{\nu_k}\le m+2n} (-1)^{|e_{i_1}|} X^*_{i_1,i_2} X^*_{i_2,i_3}\dots X^*_{i_{\nu_k}, i_1} 
\Big).
\end{equation}
and we define $g_{k,\nu}$ to be the restriction of $\ti g_{k,\nu}$ to $S^{2\rho}V$.
\end{definition}

\begin{lemma} \label{lem::formula-kappa-1} For all $w\ge0$, $(-1)^w=\rho$, $F(\k^{(\rho,w)})$ is up to a non-zero scalar the morphism
\begin{equation} \label{eq::formula-kappa-1}
\sum_{\nu\in P^+, |\nu|\le w}
\frac{(-\rho)^{\#\nu}}{\nu_1\dots\nu_{\#\nu} m_\nu(\nu_1)!\dots m_\nu(\nu_{\#\nu})!}
g_{w-|\nu|,\nu} \: S^{2\rho} V\to S^w\gf\subset\Ug .
\end{equation}
\end{lemma}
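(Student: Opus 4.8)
The plan is to deduce \Cref{lem::formula-kappa-1} directly from \Cref{def::kappa} by applying the specialization functor $F=F_{m,2n}$ termwise. Since $F$ is $\kk$-linear and monoidal, $F(\k^{(\rho,w)}) = \sum_{\nu\in P^+,|\nu|\le w} \frac{(-\rho)^{\#\nu}}{\nu_1\cdots\nu_{\#\nu} m_\nu(\nu_1)!\cdots m_\nu(\nu_{\#\nu})!} F(x^\rho_{w-|\nu|,\nu})$, so the coefficients carry over unchanged and everything reduces to the single claim that, up to a fixed nonzero scalar independent of $\nu$, $F(x^\rho_{w-|\nu|,\nu})$ equals $g_{w-|\nu|,\nu}$ as a morphism $S^{2\rho}V\to S^w\gf$.

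To prove that claim I would unwind the definition of $x^\rho_{M,\l}$ from \Cref{def::x-rho} and \Cref{def::x-g-l} in the case $e=1$: here $\g$ is the graph on two vertices with a single edge of label $M$, so $x(\g,\l)$ is the arc diagram consisting of one arc-sequence $|^{(1)}\cap^{M-1}|^{(2)}$ running from the first upper point through $2M$ lower points to the second upper point, tensored with $\bigotimes_{\l_j}\arch{\cap^{\l_j-1}}$, i.e. a closed arc-sequence with $2\l_j$ lower points for each part $\l_j$ of $\l$. Since $\sgn\g=(-1)^M=\rho$ and $\g^t$ just swaps the two upper points, $x^\rho(\g,\l)=x(\g,\l)+\rho\sgn(\g)x(\g^t,\l)$ is (twice) the symmetrization of this single diagram; after passing to the summand $S^{2\rho}V\subset T^2V$ and $S^w\gf\subset T^{2w}V$ the symmetrization is absorbed, so $F(x^\rho_{M,\l})$ is determined by evaluating $F$ on the underlying arc diagram. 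Now apply \Cref{lem::F-computations2}: the open arc-sequence $|^{(1)}\cap^{M-1}|^{(2)}$ is the diagram $|\cap^{M-1}|$ of \Cref{eq::F-3}, which sends $E^*_{ij}\mapsto \sum_{i_2,\dots,i_M} E^*_{i i_2}\o E^*_{i_2 i_3}\o\cdots\o E^*_{i_M j}$; composing with the projector to $\gf=\L^2V\subset T^2V$ turns each tensor factor $E^*_{ab}$ into $X^*_{ab}$ (up to the normalization built into the idempotent $\alt_2$), giving exactly the first factor $\sum X^*_{i,i_2}X^*_{i_2,i_3}\cdots X^*_{i_k,j}$ of \Cref{eq::g} with $k=M=w-|\nu|$. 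Likewise each closed arc-sequence $\arch{\cap^{\l_j-1}}$ is the diagram of \Cref{eq::F-4}, contributing the factor $\sum_{i_1,\dots,i_{\nu_k}} (-1)^{|e_{i_1}|} X^*_{i_1,i_2}\cdots X^*_{i_{\nu_k},i_1}$, matching the product over $\nu$ in \Cref{eq::g}. Thus $F(x^\rho_{M,\l})=g_{M,\l}$ up to the scalar coming from comparing the idempotent normalizations of $\L^2V$ and $\L^eV$ versus the raw arc diagrams; this scalar depends only on $m,2n,\rho$ and not on $\nu$, so it factors out of the whole sum, proving \Cref{eq::formula-kappa-1}.

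The main obstacle is bookkeeping rather than anything deep: one must be careful that the passage from the arc diagram $x(\g,\l)$ (an element of $A^2_{2d}$, i.e. a morphism $T^2V\to T^{2d}V$) to the morphism $x^\rho_{M,\l}\colon S^{2\rho}V\to S^w\gf$ involves pre- and post-composing with the (normalized) symmetrizers $\sym^{(V)}_{2\rho}$, $\alt_2^{\o d}$ and $s_w$, and that under $F$ these become the corresponding (anti)symmetrizers on $V_\kk$ and on $V_\kk\wedge V_\kk$. One should check that the net effect is exactly: (i) replacing every $E^*_{ab}$ appearing in \Cref{lem::F-computations2} by $X^*_{ab}=E^*_{ab}-(-1)^{|e_a||e_b|}E_{ba}$, which is legitimate because $X^*_{ab}$ is, up to a factor of $2$, the image of $E^*_{ab}$ under the idempotent cutting out $\gf$ inside $T^2V$; (ii) a global nonzero rational constant depending only on $w$, $\rho$, $m$, $n$ (products of the $2$'s and factorials from normalizing the idempotents), which is harmless. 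A clean way to present this is to note that all of \Cref{eq::g}'s structure is already visible at the level of arc diagrams via \Cref{eq::F-3} and \Cref{eq::F-4}, and to invoke the fact (used repeatedly in \Cref{sec::parametrizing}) that on the relevant summands the symmetrization maps $f_{p,e,d}$ agree, up to scalar, with the raw diagram followed by $s_d$.
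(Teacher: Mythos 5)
Your proposal is correct and follows essentially the same route as the paper's proof: apply $F$ termwise using linearity, then read off the image of each $x^\rho_{w-|\nu|,\nu}$ from \Cref{lem::F-computations2}, with the open arc sequence of label $w-|\nu|$ giving the chain $\sum X^*_{i,i_2}\cdots X^*_{i_k,j}$ and the closed arc sequences indexed by the parts of $\nu$ giving the trace-like factors. Your extra bookkeeping about the symmetrizers and the identification $\alt_2(E^*_{ab})=X^*_{ab}$ only makes explicit what the paper leaves implicit.
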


\begin{proof} This is \Cref{lem::F-computations2} applied to $\k^{(\rho,w)}$ as defined in \Cref{def::kappa}: the edge with the label $w-|\nu|$ between two distinct vertices in each graph involved in the definition of $\k^{(\rho,w)}$ corresponds to the first sum on the right-hand side of \Cref{eq::g}, while the parts of $\nu$ correspond to arc sequences on lower points only in the arc diagrams in $x^\rho_{w-|\nu|,\nu}$, which correspond to the factors in the product on the right-hand side of \Cref{eq::g} by \Cref{eq::F-3}.
\end{proof}

We want to compare this with known results on PBW deformations over $\kk=\CC$. Recall that Tsymbaliuk classified PBW deformations of type $(O_m, \CC^m, -1)$ in \cite{Tsy}*{Thm.~1.4}.

\begin{proposition} \label{prop::cmp-Tsy} 
For $\kk=\CC$, the specialized PBW deformations $\{F_{m,0}(\k^{(-1,w)}):w\ge0\todd\}$ for all $m\ge0$ are the same as the ones found in \cite{Tsy}.
\end{proposition}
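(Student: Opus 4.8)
Recall from \cite{Tsy}*{Thm.~1.4} that the PBW deformation maps of type $(O_m,\CC^m,-1)$, i.e.\ the admissible $\k\colon\L^2\CC^m\to U\gf$ with $\gf$ the Lie algebra of $O_m$, form a vector space for which \cite{Tsy} gives an explicit basis; after discarding the degenerate cases it is indexed by the odd integers $w\ge1$, the $w$-th basis map lying in the filtration piece $\bigoplus_{j\le w}S^j\gf\subset U\gf$ with a prescribed top-degree (degree-$w$) component. What must be shown is that $F_{m,0}(\J^-_1)$ is this whole space, for every $m\ge0$. One inclusion is essentially immediate: by the lemma characterising specialisations of interpolating PBW deformation maps (for any tensor functor $F$, $F(\k)$ is a PBW deformation map of type $(F(\Ug),F(W),\rho)$ iff $F(J(\k))=0$), any $\k\in\J^-_1$ has $J(\k)=0$, hence $F_{m,0}(\k)$ is a PBW deformation map of type $(U\gf,\CC^m,-1)$; thus $F_{m,0}(\J^-_1)$ is contained in Tsymbaliuk's space.

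\textbf{The matching.} For the reverse inclusion the plan is to compare explicit formulas. The main input is \Cref{lem::formula-kappa-1}, which presents $F_{m,0}(\k^{(-1,w)})$ as the explicit combination $\sum_{\nu\in P^+,\,|\nu|\le w}\tfrac{(-\rho)^{\#\nu}}{\nu_1\cdots\nu_{\#\nu}\,m_\nu(\nu_1)!\cdots m_\nu(\nu_{\#\nu})!}\,g_{w-|\nu|,\nu}$ of the maps $g_{k,\nu}$. Using the form $f_\cup$ to identify $\L^2\CC^m\simeq\gf\subset\gl$, one recognises $g_{k,\nu}$ as the map sending the image of $E^*_{ij}$ to the $(i,j)$-entry of the $k$-th power of the ``generic'' antisymmetric matrix $(X^*_{i'j'})_{i',j'}$ with entries in $\gf$, multiplied by the product over the parts $\nu_l$ of $\nu$ of the supertraces of the $\nu_l$-th powers of that matrix. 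This is exactly the shape of the generators occurring in \cite{Tsy}: odd matrix powers of an antisymmetric matrix are antisymmetric, hence land in $\gf$, while the supertrace of an even power is an invariant and that of an odd power vanishes, so the parity constraint ``$w-|\nu|$ odd, $|\nu|$ even'' is built in on both sides. It then remains to see that, for each odd $w$, the above combination of $g_{k,\nu}$'s is a nonzero scalar multiple of Tsymbaliuk's $w$-th generator, or — more economically — that it has the same degree-$w$ leading term, namely $g_{w,\emptyset}$, which is the $\nu=\emptyset$ summand and carries coefficient $1$. Granting this, the family $\{F_{m,0}(\k^{(-1,w)})\}_w$ differs from Tsymbaliuk's basis by a change of basis that is triangular for the filtration degree with invertible diagonal, so they span the same subspace of $\Hom_{O_m}(\L^2\CC^m,U\gf)$; together with the graded count from \Cref{thm::PBW-defs} (one new generator per odd $w$) and the matching count in \cite{Tsy}, this gives equality for all $m$.

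\textbf{The main obstacle.} The substance of the proof is the explicit comparison of two formulas obtained by entirely different methods — ours combinatorial, via \Cref{lem::formula-kappa-1}, and Tsymbaliuk's geometric — which includes matching the normalisation constants $\tfrac{(-\rho)^{\#\nu}}{\nu_1\cdots\nu_{\#\nu}\,m_\nu(\nu_1)!\cdots m_\nu(\nu_{\#\nu})!}$. A second subtlety is the behaviour at small $m$: for $w\ge m$ the ``leading'' maps $g_{w,\emptyset}$ are no longer independent of the others, since the Cayley--Hamilton identity for $m\times m$ matrices — together with the vanishing of $g_{k,\emptyset}$ on $\L^2$ for even $k$ and the fact that the Cayley--Hamilton coefficients are polynomials in the supertraces of even powers — rewrites $g_{w,\emptyset}$ in terms of the $g_{w-|\nu|,\nu}$ with $|\nu|>0$; one must check this rewriting is consistent on Tsymbaliuk's side as well, so that the span equality persists outside the stable range. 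I expect the cleanest route is to phrase the whole argument through leading terms and the two (equal) graded dimension counts, so that the precise values of the constants are never needed; extracting these leading terms from \Cref{lem::formula-kappa-1} and from \cite{Tsy} is then the remaining routine but delicate bookkeeping.
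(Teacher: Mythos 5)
Your ``easy inclusion'' is fine, and you have correctly identified the two formulas that need to be compared: the expression for $F_{m,0}(\k^{(-1,w)})$ from \Cref{lem::formula-kappa-1} and Tsymbaliuk's generating-function description. But the paper's proof \emph{is} the coefficient match that you defer as ``routine but delicate bookkeeping'': it expands $\det(1+\t^2A^2)^{-1/2}$ as $\exp\big(\sum_{k\ge1}\tfrac{(-1)^k}{2k}\t^{2k}\tr(A^{2k})\big)$, extracts the coefficient of $\t^{2N}$ as a sum over partitions $\nu\in P^+$ with $|\nu|\le 2N$, and checks that the resulting constant $\tfrac{c_\nu}{(\#\nu)!}\prod_t\tfrac{1}{\nu_t}$ with $c_\nu=\tfrac{(\#\nu)!}{m_\nu(\nu_1)!\cdots}$ is exactly the coefficient in \Cref{def::kappa} after setting $w=2N+1$. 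That computation is the entire content of the proposition; omitting it leaves nothing proved.

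The shortcut you propose to avoid it does not close the gap, for three concrete reasons. First, there is no triangularity to exploit: every summand $g_{w-|\nu|,\nu}$ lands in $S^{(w-|\nu|)+|\nu|}\gf=S^w\gf$, i.e.\ all terms of $F_{m,0}(\k^{(\rho,w)})$ are homogeneous of the \emph{same} degree $w$ (as are all terms of Tsymbaliuk's $\t^{2N}$-coefficient), so ``same leading term $g_{w,\emptyset}$'' does not single out anything and in particular does not rule out cancellation against the $\nu\neq\emptyset$ terms. Second, the graded count from \Cref{thm::PBW-defs} lives in $\RepOt$; the functor $F_{m,0}$ is full but not faithful, so that count does not transfer to $\Hom_{O_m}(\L^2\CC^m,\Ug)$, and you cannot conclude span equality from ``one generator per odd $w$ on each side'' without first knowing the specialized maps are nonzero and independent. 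Third, the small-$m$/Cayley--Hamilton issue you raise is real and is precisely where your argument would have to fall back on the explicit constants anyway; you acknowledge it but leave it unresolved. The direct term-by-term identification of the two formulas makes all of these points moot, because it shows the two families of maps are literally equal up to nonzero scalars, so any linear dependencies for small $m$ occur identically on both sides.
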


\begin{proof} The PBW deformations in \cite{Tsy} are defined as follows: first, we identify $S^w\gf\simeq S^w\gf^*$ using the trace form $X\o Y\mapsto\tr(XY)$. Then a basis of the PBW deformation maps is given by the maps sending $(v_1\o v_2)$ to the coefficient of $\t^{2N}$, for some $N\ge0$, in the expansion of
$$
A\mapsto (v_2, A(1+\t^2A^2)^{-1}v_1) \det(1+\t^2A^2)^{-1/2}
= (v_2, \sum_{k\ge0} (-1)^k \t^{2k} A^{2k+1} v_1)
 \exp(\sum_{k\ge1} \frac{(-1)^k}{2k} \t^{2k} \tr(A^{2k})) ,
$$
where the identity of power series $\det=\exp\circ\log\circ\det=\exp\circ\tr\circ\log$ is used. This coefficient can be expanded as
\begin{multline*}
\sum_{k=0}^N (-1)^k (v_2, A^{2k+1} v_1) \Big[
\sum_{\ell\ge0} \frac1{\ell!} \Big( 
\sum_{r\ge1} \frac{(-1)^r \t^{2r} \tr(A^{2r})}{2r} 
\Big)^\ell \Big]_{\tau^{2N-2k}}
\\
= \sum_{k=0}^N (-1)^k (v_2, A^{2k+1} v_1)
\sum_{\ell\ge 0} \frac1{\ell!} \Big(
\sum_{2r_1+\dots+2r_\ell=2N-2k}
\prod_{t=1}^{\ell} \frac{(-1)^{r_t} \tr(A^{2r_t})}{2r_t} 
\Big)
\\
= (-1)^N \sum_{k=0}^N  (v_2, A^{2k+1} v_1)
\sum_{\ell\ge0} \frac1{\ell!} \Big(
\sum_{\nu\in P^+, \#\nu=\ell, |\nu|=2N-2k} c_\nu
\prod_{t=1}^{\#\nu} \frac{\tr(A^{\nu_t})}{\nu_t} 
\Big)
\\
= (-1)^N \sum_{\nu\in P^+,|\nu|\le 2N}  (v_2, A^{2N+1-|\nu|} v_1)
\frac{c_\nu}{(\#\nu)!}
\prod_{t=1}^{\#\nu} \frac{\tr(A^{\nu_t})}{\nu_t} ,
\end{multline*}
where $c_\nu$ shall denote the number of distinct tuples whose entries are the parts of the partition $\nu$. This number is $c_\nu=\frac{(\#\nu)!}{m_\mu(\nu_1)!\dots m_\nu(\#\nu)!}$. The assertion follows if we identify $w$ with $2N+1$.
\end{proof}

Similarly, recall that Etingof--Gan--Ginzburg classified PBW deformations of type $(\Sp_{2n},\CC^{2n},-1)$ in \cite{EGG}*{Thm.~4.2}. We identify $\Sp_{2n}$ with the supergroup $\OSp(0|2n)$ and $\CC^{2n}$ with $F_{0,2n}(V)$. Note, however, that $F_{0,2n}(V)$ is a purely odd supervector space, so in the category of supervector spaces, we are considering PBW deformations for $\rho=+1$. 

\begin{proposition} \label{prop::cmp-EGG}
For $\kk=\CC$, the specialized PBW deformations $\{F_{0,2n}(\k^{(1,w)}):w\ge0\teven\}$ for all $n\ge0$ are the same as the ones found in \cite{EGG}.
\end{proposition}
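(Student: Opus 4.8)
The plan is to mirror the proof of \Cref{prop::cmp-Tsy}, translating the combinatorial formula for $\k^{(1,w)}$ from \Cref{lem::formula-kappa-1} into the generating-function shape used in \cite{EGG}*{Thm.~4.2}. First I would recall the explicit description of the PBW deformations in \cite{EGG}: identifying $S^w\gf\simeq S^w\gf^*$ via the trace form on $\gf=\mathfrak{sp}_{2n}$, a basis of deformation maps is given by extracting, for each $N\ge0$, the coefficient of a suitable power of the deformation parameter $\t$ in an expression of the form $A\mapsto (v_2, A(1-\t^2 A^2)^{-1} v_1)\,\det(1-\t^2 A^2)^{-1/2}$ (or the analogous symplectic normalization; the exact sign conventions in \cite{EGG} must be matched here, which is the one genuinely fiddly point). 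I would then expand the $\det^{-1/2}$ factor as $\exp\bigl(\sum_{r\ge1}\tfrac{\t^{2r}}{2r}\tr(A^{2r})\bigr)$ using $\det=\exp\circ\tr\circ\log$, exactly as in the proof of \Cref{prop::cmp-Tsy}.

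Next I would carry out the combinatorial expansion of this coefficient: writing $\ell$ for the number of trace factors and grouping the $2r_1,\dots,2r_\ell$ summands by the underlying partition $\nu\in P^+$ of $2N-2k=|\nu|$, the multinomial bookkeeping produces the factor $c_\nu/(\#\nu)! = 1/(m_\nu(\nu_1)!\cdots m_\nu(\nu_{\#\nu})!)$, and the product $\prod_t \tfrac{\tr(A^{\nu_t})}{\nu_t}$ contributes the $\tfrac1{\nu_1\cdots\nu_{\#\nu}}$ factor. Collecting everything and setting $w=2N$ (even now, since $\rho=+1$ corresponds to even $w$ under $F_{0,2n}$), one obtains precisely the sum $\sum_{\nu\in P^+,|\nu|\le w}\tfrac{(-\rho)^{\#\nu}}{\nu_1\cdots\nu_{\#\nu}\,m_\nu(\nu_1)!\cdots m_\nu(\nu_{\#\nu})!}\,g_{w-|\nu|,\nu}$ from \Cref{eq::formula-kappa-1} for $\rho=1$, once one checks that $g_{w-|\nu|,\nu}$ as defined via \Cref{eq::g} computes, under the identification $V\wedge V\simeq\gf\subset\gl$, exactly $(v_2,A^{w+1-|\nu|}v_1)\prod_t\tr(A^{\nu_t})$: the leading $X^*$-chain is the matrix power $A^{w-|\nu|}$ sandwiched by the two input vectors via $f_\cup$, and each closed loop $\sum_{i_1,\dots}(-1)^{|e_{i_1}|}X^*_{i_1i_2}\cdots X^*_{i_{\nu_k}i_1}$ is by definition $\str(A^{\nu_k})$, which restricted to $\mathfrak{sp}_{2n}=\OSp(0|2n)$ agrees up to sign with the ordinary trace $\tr$ on $\CC^{2n}$.

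The step I expect to be the main obstacle is matching conventions: the signs (the $(-\rho)^{\#\nu}=(-1)^{\#\nu}$ versus the $(-1)^r$ in the exponent, together with the purely-odd parity shift that turns $\str$ into $\tr$ and $\rho=-1$ in $\RepOt$ into $\rho=+1$ in $\SVec$), the normalization of the trace form used to identify $S^w\gf$ with its dual, and the precise parameter substitution $w\leftrightarrow 2N$ or $2N+1$. Once these are pinned down, the computation is line-for-line parallel to \Cref{prop::cmp-Tsy}; indeed I would phrase the proof as ``The argument is entirely analogous to the proof of \Cref{prop::cmp-Tsy}, with the following modifications,'' and then list only the changed signs and the replacement of $\tr$ by $\str$, of $O_m$ by $\OSp(0|2n)$, and of odd $w$ by even $w$, invoking \Cref{lem::formula-kappa-1} for the identification of $F_{0,2n}(\k^{(1,w)})$ with \Cref{eq::formula-kappa-1}.
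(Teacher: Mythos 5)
Your overall strategy is exactly the paper's: the proof is a line-for-line adaptation of \Cref{prop::cmp-Tsy}, expanding the generating function from \cite{EGG} via $\det=\exp\circ\tr\circ\log$, regrouping the resulting tuples of exponents by the underlying partition $\nu$ to produce the factor $c_\nu/(\#\nu)!=1/(m_\nu(\nu_1)!\cdots m_\nu(\nu_{\#\nu})!)$, and matching the outcome against the formula of \Cref{lem::formula-kappa-1}.

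However, the step you flag as ``fiddly'' is precisely where your sketch goes wrong, and it is not just a matter of normalization. The generating function used for the symplectic case is $A\mapsto (v_2,(1-\t^2A^2)^{-1}v_1)\det(1-\t A)^{-1}$: there is no leading factor of $A$ in the pairing, so the powers appearing are $A^{2k}$ and the final exponent is $A^{w-|\nu|}$ with $w=2N$, not $A^{w+1-|\nu|}$ as you wrote (you carried over the off-by-one from the orthogonal case, where $w=2N+1$ absorbs the extra $A$); and the determinant factor is $\det(1-\t A)^{-1}$, not $\det(1-\t^2A^2)^{-1/2}$. The latter point is essential for the signs: your proposed factor expands as $\exp\bigl(\sum_{r\ge1}\tfrac{\t^{2r}}{2r}\tr(A^{2r})\bigr)$ with all positive coefficients in the exponent, so your computation would yield $(+1)^{\#\nu}$, which does not match the coefficient $(-\rho)^{\#\nu}=(-1)^{\#\nu}$ in $\k^{(1,w)}$ from \Cref{def::kappa}. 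The paper instead expands its determinant factor as $\exp(-\sum_{k\ge1}\tfrac1k\t^k\tr(A^k))$, so each trace factor contributes a sign $-1$, and the restriction of the sum to $\nu\in P^+$ is obtained separately from the observation that $\tr(A^r)=0$ for odd $r$ when $A\in\mathfrak{sp}_{2n}$, rather than from the exponent containing only even powers of $\t$ a priori. Once the correct generating function is inserted, the remainder of your argument --- the multinomial bookkeeping, the identification via \Cref{lem::F-computations2} of the $X^*$-chains and closed loops in \Cref{eq::g} with matrix powers and (super)traces, and the substitution $w=2N$ --- goes through as in the paper.
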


\begin{proof} The proof is analogous to the one of \Cref{prop::cmp-Tsy}: the PBW deformations in \cite{EGG} are defined using the expansion of
$$
A\mapsto
(v_2, (1-\t^2A^2)^{-1} v_1)\det(1-\t A)^{-1}
= (v_2, \sum_{k\ge0} \t^{2k} A^{2k} v_1)
 \exp(-\sum_{k\ge1} \frac{1}{k} \t^k \tr(A^k)) .
$$
Taking the coefficient of $\t^{2N}$ for $N\ge0$ yields
\begin{multline*}
\sum_{k=0}^N (v_2, A^{2k} v_1) \sum_{\ell\ge0} \frac{1}{\ell!} 
\Big[ \Big(
-\sum_{r\ge1} \frac{\t^r \tr(A^r)}{r}
\Big)^\ell \Big]_{\t^{2N-2k}}
\\=
\sum_{k=0}^N (v_2, A^{2k} v_1) \sum_{\ell\ge0} \frac{(-1)^\ell}{\ell!} 
\sum_{r_1+\dots+r_\ell=2N-2k} \prod_{t=1}^\ell \frac{\tr(A^{r_t})}{r_t}
\\=
\sum_{\nu\in P,|\nu|\le 2N} (v_2, A^{2N-|\nu|} v_1)  \frac{(-1)^{\#\nu} c_\nu}{(\#\nu)!} \prod_{t=1}^{\#\nu} \frac{\tr(A^{\nu_t})}{\nu_t},
\end{multline*}
with $c_\nu=\frac{(\#\nu)!}{m_\mu(\nu_1)!\dots m_\nu(\#\nu)!}$ as above. The assertion follows after identifying $w=2N$ and noting that $\tr(A^r)=0$ for odd $r$.
\end{proof}

So, in particular, \Cref{eq::formula-kappa-1} gives a combinatorial formula for the PBW deformations found both in \cite{Tsy} and \cite{EGG}, which extends to all orthosymplectic supergroups.

\bibliography{bib}
\bibliographystyle{amsrefs}%

\end{document}